\theoremstyle{plain}
    \newtheorem{thm}{Theorem}[section]
    \newtheorem{prop}[thm]{Proposition}
    \newtheorem{lemma}[thm]{Lemma}
    \newtheorem{corollary}[thm]{Corollary}
    \newtheorem{subsec}[thm]{}
    \newtheorem*{thma}{Theorem A}
    \newtheorem*{thmb}{Theorem B}
    \newtheorem*{thmc}{Theorem C}
\theoremstyle{definition}
    \newtheorem{defn}[thm]{Definition}
    \newtheorem{example}[thm]{Example}
    \newtheorem{notation}[thm]{Notation}
\theoremstyle{remark}
        \newtheorem{remark}[thm]{Remark}
        \newtheorem{warning}[thm]{Warning}
    \newtheorem{ack}[thm]{Acknowledgements}
\newenvironment{myeq}[1][]
{\stepcounter{thm}\begin{equation}\tag{\thethm}{#1}}
{\end{equation}}
\newcommand{\mydiagram}[2][]
{\stepcounter{thm}\begin{equation}
     \tag{\thethm}{#1}\vcenter{\xymatrix{#2}}\end{equation}}
\newcommand{\mysdiag}[2][]
{\stepcounter{thm}\begin{equation}
     \tag{\thethm}{#1}\vcenter{\xymatrix@R=20pt@C=15pt{#2}}\end{equation}}
\newenvironment{mysubsection}[2][]
{\begin{subsec}\begin{upshape}\begin{bfseries}{#2.}
\end{bfseries}{#1}}
{\end{upshape}\end{subsec}}
\newenvironment{mysubsect}[2][]
{\begin{subsec}\begin{upshape}\begin{bfseries}{#2\vsn.}
\end{bfseries}{#1}}
{\end{upshape}\end{subsec}}
\newcommand{\sect}{\setcounter{thm}{0}\section}
\newcommand{\wh}{\ -- \ }
\newcommand{\wwh}{-- \ }
\newcommand{\w}[2][ ]{\ \ensuremath{#2}{#1}\ }
\newcommand{\ww}[1]{\ \ensuremath{#1}}
\newcommand{\www}[2][ ]{\ensuremath{#2}{#1}\ }
\newcommand{\wwb}[1]{\ \ensuremath{(#1)}-}
\newcommand{\wb}[2][ ]{\ (\ensuremath{#2}){#1}\ }
\newcommand{\wref}[2][ ]{\ (\ref{#2}){#1}\ }
\newcommand{\hsp}{\hspace*{7 mm}}
\newcommand{\hsm}{\hspace*{2 mm}}
\newcommand{\vsn}{\vspace{2 mm}}
\newcommand{\vsm}{\vspace{4 mm}}
\newcommand{\hra}{\hookrightarrow}
\newcommand{\xra}[1]{\xrightarrow{#1}}
\newcommand{\xepic}[1]{\xrightarrow{#1}\hspace{-5 mm}\to}
\newcommand{\lra}[1]{\langle{#1}\rangle}
\newcommand{\epic}{\to\hspace{-3 mm}\to}
\newcommand{\xhra}[1]{\overset{#1}{\hookrightarrow}}
\newcommand{\efp}{\to\hspace{-1.5 mm}\rule{0.1mm}{2.2mm}\hspace{1.2mm}}
\newcommand{\efpic}{\mbox{$\to\hspace{-3.5 mm}\efp$}}
\newcommand{\hefpic}{\mbox{$\to\hspace{-2.5 mm}\to\hspace{-1.0 mm}\rule{0.1mm}{2.2mm}\hspace{1.2mm}$}}
\newcommand{\rest}[1]{\lvert\sb{#1}}
\newcommand{\Hu}[3]{H\sp{#1}({#2};{#3})}
\newcommand{\HuR}[1]{\Hu{\ast}{#1}{R}}
\newcommand{\ab}{\operatorname{ab}}
\newcommand{\chr}{\operatorname{char}}
\newcommand{\colim}{\operatorname{colim}}
\newcommand{\core}{\operatorname{core}}
\newcommand{\diag}{\operatorname{diag}}
\newcommand{\disc}{\operatorname{d}}
\newcommand{\ev}{\operatorname{ev}}
\newcommand{\gr}{\operatorname{gr}}
\newcommand{\ho}{\operatorname{ho}}
\newcommand{\Hom}{\operatorname{Hom}}
\newcommand{\hHom}{\widehat{\Hom}}
\newcommand{\Hm}[1]{{#1}\sp{\dag}}
\newcommand{\hvare}{\varepsilon\sp{\dag}}
\newcommand{\Kan}{\operatorname{Kan}}
\newcommand{\Ker}{\operatorname{Ker}}
\newcommand{\Id}{\operatorname{Id}}
\newcommand{\Image}{\operatorname{Im}}
\newcommand{\hIm}{\widehat{\Image}}
\newcommand{\hhIm}{\widehat{\widehat{\Image}}}
\newcommand{\inc}{\operatorname{inc}}
\newcommand{\New}{\operatorname{New}}
\newcommand{\Obj}{\operatorname{Obj}\,}
\newcommand{\op}{\sp{\operatorname{op}}}
\newcommand{\proj}{\operatorname{proj}}
\newcommand{\Stov}{\operatorname{St}}
\newcommand{\Tot}{\operatorname{Tot}}
\newcommand{\uTot}{\underline{\Tot}}
\newcommand{\map}{\operatorname{map}}
\newcommand{\mapa}{\map\sb{\ast}}
\newcommand{\A}{\mathbf{A}}
\newcommand{\eA}{{\EuScript A}}
\newcommand{\eB}{{\EuScript B}}
\newcommand{\C}{\mathcal{C}}
\newcommand{\F}{\mathcal{F}}
\newcommand{\hF}{\widehat{\F}}
\newcommand{\G}{\mathcal{G}}
\newcommand{\HR}[1]{{\EuScript H}{#1}}
\newcommand{\LA}{{\mathcal L}\sb{\F}}
\newcommand{\LG}{\hat{L}\sb{\G}}
\newcommand{\LGp}{\hat{L}\sb{\G'}}
\newcommand{\LR}[1]{\widehat{{\mathcal L}}\sb{R}\sp{#1}}
\newcommand{\M}{{\mathcal M}}
\newcommand{\UM}{M}
\newcommand{\UPM}{PM}
\newcommand{\sMR}{s\M\sb{R}}
\newcommand{\Map}{{\EuScript Map}}
\newcommand{\MA}{\Map\sb{\A}}
\newcommand{\MF}{\Map\sb{\TsF}}
\newcommand{\MT}{\Map\sb{\bT}}
\newcommand{\MFrd}{\Map\sb{\F,\disc}}
\newcommand{\MRrd}{\widehat{\Map}\sb{R,\disc}}
\newcommand{\Mop}{\MFrd\op}
\newcommand{\MRop}{\MRrd\op}
\newcommand{\MFS}{\Map\sb{\F}\sp{\Stov}}
\newcommand{\MAS}{\MFS}
\newcommand{\OO}{\mathcal{O}}
\newcommand{\PP}{\mathcal{P}}
\newcommand{\cT}{\mathcal T}
\newcommand{\RF}{{\mathcal R}\sb{\F}}
\newcommand{\RA}{\RF}
\newcommand{\RR}[1]{\widehat{{\mathcal R}}\sb{R}\sp{#1}}
\newcommand{\SF}{{\mathcal S}\sb{\F}}
\newcommand{\SA}{\SF}
\newcommand{\SR}[1]{\widehat{\mathcal S}\sb{R}\sp{#1}}
\newcommand{\TF}{\cT\sb{\F}}
\newcommand{\cTR}{{\EuScript T}\sb{R}}
\newcommand{\hTR}[1]{\widehat{\cT}\sb{R}\sp{#1}}
\newcommand{\ak}[1]{a\sp{#1}}
\newcommand{\dz}[1]{d\sp{0}\sb{#1}}
\newcommand{\od}{\overline{\partial}}
\newcommand{\ud}{\overline{d}}
\newcommand{\vdz}[1]{{\underline{d}\sp{0}\sb{#1}}}
\newcommand{\odz}[1]{\od\sp{#1}\sb{0}}
\newcommand{\udz}[1]{\ud\sp{0}\sb{#1}}
\newcommand{\td}{{\raisebox{-1.5ex}{$\stackrel{\textstyle d}{\sim}$}}}
\newcommand{\tdz}[1]{\td\quad\hspace*{-4mm}\sp{0}\sb{#1}}
\newcommand{\etk}[1]{\eta\sp{#1}}
\newcommand{\Fk}[1]{F\sp{#1}}
\newcommand{\oph}{\overline{\varphi}}
\newcommand{\ophi}[2]{\oph\sp{#1}\sb{[{#2}]}}
\newcommand{\prn}[1]{\pi\sb{[{#1}]}}
\newcommand{\prnk}[2]{\pi\sb{[{#1}]}\sp{#2}}
\newcommand{\psn}[2]{\psi\sp{#1}\sb{[{#2}]}}
\newcommand{\qk}[1]{q\sp{#1}}
\newcommand{\vare}{\varepsilon}
\newcommand{\vn}[1]{\vare\sb{[{#1}]}}
\newcommand{\svn}[1]{\overline{\vare\sb{[{#1}]}}}
\newcommand{\zn}[2]{\zeta\sp{#1}\sb{[{#2}]}}
\newcommand{\Set}{{\EuScript Set}}
\newcommand{\Seta}{\Set\sb{\ast}}
\newcommand{\Grp}{{\EuScript Gp}}
\newcommand{\Top}{{\EuScript Top}}
\newcommand{\Topa}{\Top\sb{\ast}}
\newcommand{\cS}{{\EuScript S}}
\newcommand{\SGa}{\Seta\sp{\Gamma}}
\newcommand{\Del}{\mathbf{\Delta}}
\newcommand{\Dp}{\Del\sp{+}}
\newcommand{\Du}{\Del\sp{\bullet}}
\newcommand{\Sa}{\cS\sb{\ast}}
\newcommand{\Sk}{\Sa\sp{\Kan}}
\newcommand{\bT}{\mathbf{\Theta}}
\newcommand{\TsA}{\bT\sb{\eA}}
\newcommand{\TsR}{\bT\sb{R}}
\newcommand{\ThA}{\Theta\sb{\eA}}
\newcommand{\TsF}{\bT\sb{\F}}
\newcommand{\TFs}[1]{\bT\sb{\F}\sp{\Stov,{#1}}}
\newcommand{\ThF}{\Theta\sb{\F}}
\newcommand{\TR}{\Theta\sb{R}}
\newcommand{\FF}{\mathbb F}
\newcommand{\Fp}{\FF\sb{p}}
\newcommand{\Fq}{\FF\sb{q}}
\newcommand{\NN}{\mathbb N}
\newcommand{\QQ}{\mathbb Q}
\newcommand{\ZZ}{\mathbb Z}
\newcommand{\bA}{{\mathbf A}}
\newcommand{\bB}{{\mathbf B}}
\newcommand{\uA}[1]{\underline{\bA}\sb{#1}}
\newcommand{\uB}[1]{\underline{\bB}\sb{#1}}
\newcommand{\bK}{{\mathbf K}}
\newcommand{\KP}[2]{\bK({#1},{#2})}
\newcommand{\KR}[1]{\KP{R}{#1}}
\newcommand{\KZ}[1]{\KP{\ZZ}{#1}}
\newcommand{\uL}{\underline{\Lambda}}
\newcommand{\bS}[1]{{\mathbf S}\sp{#1}}
\newcommand{\bU}{{\mathbf U}}
\newcommand{\bW}{{\mathbf W}}
\newcommand{\bX}{{\mathbf X}}
\newcommand{\bY}{{\mathbf Y}}
\newcommand{\bZ}{{\mathbf Z}}
\newcommand{\hQ}[1]{\widehat{Q}\sb{#1}}
\newcommand{\hdel}{\widehat{\delta}}
\newcommand{\het}[1]{\widehat{\eta}\sb{#1}}
\newcommand{\hmu}[1]{\widehat{\mu}\sb{#1}}
\newcommand{\home}[1]{\widehat{\omega}\sb{#1}}
\newcommand{\hvar}{\widehat{\vare}}
\newcommand{\hxi}[1]{\widehat{\xi}\sb{#1}}
\newcommand{\cu}[1]{c({#1})\sp{\bullet}}
\newcommand{\cd}[1]{c({#1})\sb{\bullet}}
\newcommand{\Ad}{A\sb{\bullet}}
\newcommand{\Bu}{B\sp{\bullet}}
\newcommand{\Gd}{G\sb{\bullet}}
\newcommand{\Gu}{G\sp{\bullet}}
\newcommand{\oG}[1]{\overline{G}\sb{#1}}
\newcommand{\Ud}{\bU\sb{\bullet}}
\newcommand{\Vd}{V\sb{\bullet}}
\newcommand{\Vu}{V\sp{\bullet}}
\newcommand{\oV}[1]{\overline{V}\sb{#1}}
\newcommand{\Wu}{\bW\sp{\bullet}}
\newcommand{\hW}[1]{\widehat{\bW}\sp{#1}}
\newcommand{\hWu}{\hW{\bullet}}
\newcommand{\hWl}[1]{\widehat{\bW}\sp{#1}\sb{\lambda}}
\newcommand{\hWlu}{\hWl{\bullet}}
\newcommand{\Wn}[2]{\bW\sp{#1}\sb{[#2]}}
\newcommand{\W}[1]{\Wn{\bullet}{#1}}
\newcommand{\tW}{{\raisebox{-1.5ex}{$\stackrel{\textstyle \bW}{\sim}$}}}
\newcommand{\tWn}[2]{\tW\quad\hspace*{-4mm}\sp{#1}\sb{[#2]}}
\newcommand{\uW}[1]{\overline{\bW}\sp{#1}}
\newcommand{\uWn}[2]{\uW{#1}\sb{[#2]}}
\newcommand{\vW}[1]{\widehat{\bW}\sp{#1}}
\newcommand{\vWn}[2]{\vW{#1}\sb{[#2]}}
\newcommand{\vWu}[1]{\vWn{\bullet}{#1}}
\newcommand{\Xu}{\bX\sp{\bullet}}
\newcommand{\Yu}{\bY\sp{\bullet}}
\newcommand{\Alg}[1]{{#1}\text{-}{\EuScript Alg}}
\newcommand{\Aal}[1][ ]{$\ThA$-algebra{#1}}
\newcommand{\Fal}[1][ ]{$\ThF$-algebra{#1}}
\newcommand{\Tal}[1][ ]{$\Theta$-algebra{#1}}
\newcommand{\TRal}[1][ ]{$\TR$-algebra{#1}}
\newcommand{\TAlg}{\Alg{\Theta}}
\newcommand{\FAlg}{\Alg{\ThF}}
\newcommand{\TRA}{\Alg{\TR}}
\newcommand{\Mod}[1]{{#1}\text{-}{\EuScript Mod}}
\newcommand{\ma}[1][ ]{mapping algebra{#1}}
\newcommand{\Ama}[1][ ]{$\TsA$-mapping algebra{#1}}
\newcommand{\dRma}[1][ ]{semi-discrete $R$-mapping algebra{#1}}
\newcommand{\Tma}[1][ ]{$\bT$-mapping algebra{#1}}
\newcommand{\Rma}[1][ ]{$\TsR$-mapping algebra{#1}}
\newcommand{\Sma}[1][ ]{$\F$-Stover mapping algebra{#1}}
\newcommand{\Fma}[1][ ]{$\TsF$-mapping algebra{#1}}
\newcommand{\dFma}[1][ ]{discrete $\F$-mapping algebra{#1}}
\newcommand{\lin}[1]{\{{#1}\}}
\newcommand{\fff}{\mathfrak{f}}
\newcommand{\fG}{\mathfrak{G}}
\newcommand{\fM}{\mathfrak{M}}
\newcommand{\fMT}{\fM\sb{\bT}}
\newcommand{\fMA}{\fM\sb{\eA}}
\newcommand{\fMR}{\fM\sb{R}}
\newcommand{\fMFS}{\fM\sb{\F}\sp{\Stov}}
\newcommand{\fMS}{\fMFS}
\newcommand{\fMF}{\fM\sb{\F}}
\newcommand{\fV}{\mathfrak{V}}
\newcommand{\fVn}[2]{\fV\sp{[{#1}]}\sb{#2}}
\newcommand{\fVd}{\fV\sb{\bullet}}
\newcommand{\fVt}[1]{\tilde{\fV}\sb{#1}}
\newcommand{\fVdd}{\fVt{\bullet}}
\newcommand{\fVnd}[1]{\fV\sp{[{#1}]}\sb{\bullet}}
\newcommand{\fW}{\mathfrak{W}}
\newcommand{\fWd}{\fW\sb{\bullet}}
\newcommand{\fX}{\mathfrak{X}}
\newcommand{\fY}{\mathfrak{Y}}
\newcommand{\fZ}{\mathfrak{Z}}
\newcommand{\bk}{[\mathbf{k}]}
\newcommand{\bn}{[\mathbf{n}]}
\begin{document}
%
%
\title{Mapping spaces and $R$-completion}
\author{David Blanc}
\author{Debasis Sen}
\address{Department of Mathematics\\ University of Haifa\\
31905 Haifa\\ Israel}
\email{blanc@math.haifa.ac.il,\ sen\_deba@math.haifa.ac.il}

\date{\today}

\subjclass[2010]{Primary: 55P48; \ secondary: 55P20, 55P60, 55U35}
\keywords{Mapping space, mapping algebra, $p$-completion, rationalization,
cosimplicial resolution}

\begin{abstract}
We study the questions of how to recognize when a simplicial set $X$ is of the
form \w[,]{X=\mapa(\bY,\bA)} for a given space $\bA$, and how to recover $\bY$
from $X$, if so. A full answer is provided when \w[,]{\bA=\KR{n}}
for \w{R=\Fp} or $\QQ$, in terms of a \emph{\ma} structure on $X$ (defined in
terms of product-preserving simplicial functors out of a certain
simplicially-enriched sketch $\bT$). In addition, when
\w{\bA=\Omega\sp{\infty}\eA} for a suitable connective ring spectrum $\eA$,
we can \emph{recover} $\bY$ from \w[,]{\mapa(\bY,\bA)}
given such a \ma structure. Most importantly, our methods  provide a new
way of looking at the classical Bousfield-Kan $R$-completion.
\end{abstract}

\maketitle

\setcounter{section}{0}

%
%
\section*{Introduction}
\label{cint}

Given pointed topological spaces $\bA$ and $\bY$, one can form the
simplicial mapping space \w[,]{\mapa(\bY,\bA)} which models
the topological space \w{\Hom_{\Topa}(\bY,\bA)} of all pointed continuous maps
(with the compact-open topology). Such mapping spaces
play a central role in modern homotopy theory, so it is natural to ask
when a simplicial set $X$ is of the form \w[,]{\mapa(\bY,\bA)}
up to weak equivalence. We assume that one of the two spaces $\bA$ and
$\bY$ is given, but not both.

This problem has been extensively studied in the case where \w{\bY=\bS{n}}
is a sphere, so $X$ is an $n$-fold loop space (see, e.g.,
\cite{SugH,StaH,MayG,BadzA}). A general answer for any pointed $\bY$ was
provided in \cite{BBDoraR}.

Here we consider the dual problem: given a space $\bA$ and a
simplicial set $X$,

\begin{enumerate}
\renewcommand{\labelenumi}{(\alph{enumi})~}
\item when is $X$ of the form \w{\mapa(\bY,\bA)} for some space $\bY$?
\item If $X$ satisfies the conditions prescribed in the answer to (a),
  how can we recover $\bY$ from it?
\end{enumerate}

Observe that the best we can hope for is to recover $\bY$ up $\bA$-equivalence \wh
where a map \w{f:\bY\to\bY'} is called an $\bA$-\emph{equivalence} if it induces
a weak equivalence \w[.]{\mapa(\bY',\bA)\simeq\mapa(\bY,\bA)}

Unfortunately, the methods of \cite{BBDoraR} do not carry over in full to
the dual problem: we can answer both questions only when \w{\bA=\KR{n}} for
\w{R=\QQ} or \w[.]{\Fp} However, if we know that $X$ is a mapping space, and
only wish to recover $\bY$, we can do a little better: in this case we can
allow $\bA$ to be \w{\Omega\sp{\infty}\eA} for a suitable ring
spectrum $\eA$ (in particular, $\bA$ can be \w{\KR{n}} for any commutative
ring $R$).
Nevertheless, this is probably the most useful example of the problem
under consideration, since the most important representable functors studied in
homotopy theory are the generalized cohomology theories corresponding
to such $\Omega$-spectra.

\begin{remark}\label{realma}
Note that when \w[,]{\bA=\KP{V}{n}} \w{X:=\mapa(\bY,\bA)} is itself
a GEM \wh i.e., it is homotopy equivalent to a product of
Eilenberg-Mac~Lane spaces \wh so it appears to carry very little
homotopy-invariant information. Thus at first sight it seems unlikely that
one could recover anything but the $R$-cohomology of $\bY$ from $X$.

In particular, any GEM
\w{X\simeq\prod\sb{i=1}\sp{n}\,\KP{M\sb{i}}{i}} for finitely-generated
abelian groups \w{M\sb{0},\dotsc,M\sb{n}} is homotopy equivalent to
\w{\mapa(\bY,\KZ{n})} for some space $\bY$ \wh e.g., for
\w[,]{\bY\simeq\bigvee\sb{i=0}\sp{n}\,L(M\sb{i},i)} where \w{L(M\sb{i},i)}
is a co-Moore space (cf.\ \cite{KainC}). Thus the answer to (a) is always
positive, for such an $X$.

There are two difficulties with this point of view:
\begin{enumerate}
\renewcommand{\labelenumi}{(\roman{enumi})~}
\item This will not necessarily work for other rings $R$, or if the abelian
groups \w{M\sb{i}} are not finitely generated (see \cite{GGonCM} and
\S \ref{rallow} below).
\item There are usually many other possible choices of $\bY$, and
the homotopy type of $X$ alone does not enable us to distinguish between them.
\end{enumerate}

With reference to the second point, we must therefore impose sufficient
additional structure on \w{X=\mapa(\bY,\bA)} to allow us to recover $\bY$,
uniquely up to $\bA$-equivalence.  This structure is defined as follows:
\end{remark}

\begin{mysubsection}{Mapping algebras}
\label{smapalg}
In the original example of loop space recognition mentioned above, the
additional data on \w{\Omega\bY=\mapa(\bS{1},\bA)} consisted of the group
structure, induced by the cogroup structure map
\w[,]{\nabla:\bS{1}\to\bS{1}\vee\bS{1}} its iterates, and various (higher)
homotopies between them. These are all encoded in the actions, by composition,
of pointed mapping spaces between wedges of circles on \w{\Omega\bY} (and on
\w[,]{\Omega\bA\times\Omega\bA=\mapa(\bS{1}\vee\bS{1},\bA)} and so on).
When \w[,]{\bY=\bS{n}} this is described in terms of a suitable operad
(see \cite{MayG}), but a codification of the additional structure needed
for an arbitrary pointed space $\bY$ was provided in \cite{BBlaC,BBDoraR}.

The latter suggests that in the dual case one should look the action of the
mapping spaces \w{\mapa(\prod_{i=1}^{n}\bA,\,\bA)} on
\w[.]{\mapa(\bY,\prod_{i=1}^{n}\bA)=\prod_{i=1}^{n}\mapa(\bY,\bA)} Moreover,
since \w[,]{\Omega\mapa(\bY,\bA)=\mapa(\bY,\Omega\bA)} we should consider
also maps products of various loop spaces of $\bA$.

Although some of our results are valid for any $H$-group $\bA$,
their full force requires the ability to deloop $\bA$ arbitrarily,
so we start with an $\Omega$-spectrum \w[,]{\eA:=(\uA{n})\sb{n\in\ZZ}}
where we might have \w{\bA=\uA{0}} in mind for our original questions (a) and (b)
above. We can then formalize the additional structure mentioned above as follows:

Let \w{\TsA} be the sub-category of \w{\Topa} (or any other simplicial category
$\C$) whose objects are generated by the spaces \w{\uA{n}} \wb{n\in\ZZ} under
products of cardinality \www[,]{<\lambda} for a suitably chosen cardinal $\lambda$.
Thus the objects of \w{\TsA} have the form \w[,]{\prod\sb{i\in I}\uA{n\sb{i}}}
for some \w{(n\sb{i})\sb{i\in I}} in $\ZZ$. This will be called an
\emph{enriched sketch}, since it inherits a simplicial enrichment from $\C$.

For example, if \w{\eA=\HR{R}:=(\KR{n})\sb{n=0}\sp{\infty}} is the
$R$-Eilenberg-Mac~Lane spectrum for some ring $R$, and \w[,]{\lambda=\aleph\sb{0}}
then the objects of \w{\TsA} are all finite type free $R$-module GEMs.

A simplicial functor \w{\fX:\TsA\to\Sa} which preserved all loops and
products will be called a \emph{\Ama[.]} The main  example we have in mind
is a \emph{realizable}  \Ama[,] of the form \w{\bB\mapsto\mapa(\bY,\bB)} for
some fixed object $\bY$ (and all \w[,]{\bB\in\TsA} of course).

Note that if \w[,]{X=\mapa(\bY,\uA{k})} the realizable \Ama $\fX$
corresponding to $\bY$ has
\w[,]{\fX(\prod\sb{i\in I}\uA{n_{i}})=\prod\sb{i\in I}\Omega\sp{k-n_{i}}X}
so we can think of $\fX$ as additional structure on the simplicial set $X$,
including choices of \emph{deloopings} of $X$.
\end{mysubsection}

It turns out that this structure is precisely what is needed to recover $\bY$
from $X$, under suitable assumptions. In fact, we show:

\begin{thma}
Let \w{\eA=(\uA{n})\sb{n=0}\sp{\infty}} be an $\Omega$-spectrum
model of a connective ring spectrum, with \w{R=\pi_{0}\eA} a commutative
ring, and let $\fX$ be a \Ama structure on
\w{X=\mapa(\bY,\uA{0})} for a simply-connected space $\bY$. We then can
construct functorially from $\fX$ a cosimplicial space \w{\Wu} with total space
\w{\Tot\Wu} weakly equivalent to the $R$-completion of $\bY$, and
\w[.]{X\simeq\mapa(\Tot\Wu,\uA{0})}
\end{thma}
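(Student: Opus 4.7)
The strategy is to mimic the classical Bousfield--Kan $R$-completion, in which $R_{\infty}\bY$ appears as the total space of a cosimplicial resolution whose $n$-th level lies in $\TsA$ (being a product of $\uA{k}$-spaces) and whose structure maps come from iterating a suitable monad on spaces under $\bY$. Since $\fX$ records all maps from $\bY$ into arbitrary objects of $\TsA$, the combinatorial data needed to build this resolution is already encoded in $\fX$; the task is to extract $\Wu$ directly, without reference to $\bY$ itself.

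The first step is the construction of $\Wu$ as a cosimplicial object in $\TsA$, built from $\fX$ alone. At each cosimplicial degree $n$, the object $\bW^{n}$ is to be assembled so that in the realizable case $\fX(\bB) = \mapa(\bY,\bB)$ it reproduces, up to level-wise weak equivalence, the $n$-th stage of the Bousfield--Kan $R$-resolution of $\bY$. The coface maps come from the diagonals and unit maps among products of the $\uA{k}$, together with the loop-compatibility of $\fX$; the codegeneracies come from the multiplication supplied by the $\Omega$-spectrum structure on $\eA$. Because $\fX$ preserves products and loops, this cosimplicial data is transported verbatim along $\fX$, and cosimplicial identities in $\TsA$ are preserved by the simplicial functor $\fX$.

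Convergence is then standard: simple connectivity of $\bY$ together with connectivity of $\eA$ and commutativity of $R=\pi_{0}\eA$ place us in the classical Bousfield--Kan framework, giving $\Tot\Wu \simeq R_{\infty}\bY$. The second conclusion $X \simeq \mapa(\Tot\Wu, \uA{0})$ then follows from the chain
\[
\mapa(\Tot\Wu, \uA{0}) \ \simeq\ \Tot\mapa(\Wu, \uA{0}) \ \simeq\ \Tot\fX(\Wu) \ \simeq\ X,
\]
where the first step uses that $\mapa(-,\uA{0})$ turns the homotopy colimit defining $\Tot$ into a homotopy limit (with appropriate $\lim^{1}$-vanishing for the tower of partial totalizations), the second is realizability of $\fX$ applied level-wise to $\Wu$, and the third identifies $\Tot$ of the resulting augmented cosimplicial resolution of $X$ with $X$ itself.

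The main obstacle is the intrinsic construction of $\Wu$ from $\fX$, independent of $\bY$. Coherence of the cosimplicial identities \wh which a priori depend on specific $\bY$-level choices \wh must be made to hold strictly rather than merely up to homotopy, and this is where the full strength of the simplicial enrichment of the sketch $\TsA$, together with the strict $\Omega$-spectrum deloopings on $\eA$, are essential. Simple connectivity of $\bY$ is used only to invoke Bousfield--Kan convergence at the end.
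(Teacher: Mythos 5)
Your proposal correctly names the crux \wh constructing \w{\Wu} from $\fX$ without reference to $\bY$ \wh but then leaves exactly that point unresolved. You describe \w{\bW\sp{n}} as ``assembled'' from diagonals, units and the spectrum multiplication so as to reproduce the Bousfield--Kan resolution, but the Bousfield--Kan cosimplicial space \w{R\sp{\bullet+1}\bY} is produced by iterating a functor \emph{on the space} $\bY$, and there is no way to transport it along $\fX$, which is a functor \emph{out of} \w[.]{\TsA} The paper's solution is the dual Stover construction: one passes to the discrete mapping algebra \w[,]{\rho\fX} builds a contravariant adjoint pair between \w{\LA} and \w{\RA} (so that \w{\RA\fY} is an explicit limit of copies of the \w{\uA{n}} and their path spaces indexed by the \emph{elements} of $\fY$), shows that \w{\rho\fX} is canonically a coalgebra over the comonad \w{\SA=\LA\circ\RA} (Proposition \ref{pcoalgebra}), and obtains \w{\Wu} from the cotriple standard construction, \w[;]{\bW\sp{n}=\RA\SA\sp{n}\rho\fX} the coalgebra structure supplies the extra degeneracy giving acyclicity of \w[.]{[\Wu,\uA{n}]} Moreover, your codegeneracies do not come from the ring multiplication of $\eA$ (which enters only in identifying \w{R=\pi\sb{0}\eA} and comparing $\eA$-modules with \ww{\HR{R}}-modules), but from the counit of the comonad. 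Finally, for a general commutative ring $R$ the relevant class of injective models is a proper \emph{class}, and the whole of Section \ref{cmac} (effective epimorphisms, the cardinal \w[,]{\widehat{\lambda}\sb{\bY}} and the stabilization of Proposition \ref{pstabilize}) is needed to replace it by a set before Bousfield's convergence theorems can be quoted.

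There is a second genuine gap in your closing chain \w[:]{\mapa(\Tot\Wu,\uA{0})\simeq\Tot\mapa(\Wu,\uA{0})\simeq X} applying \w{\mapa(-,\uA{0})} to the cosimplicial object \w{\Wu} yields a \emph{simplicial} space, and mapping out of a homotopy limit does not turn it into a homotopy colimit of mapping spaces; the first equivalence is unjustified and false in general. The paper argues instead that the coaugmentation \w{\bY\to\uTot\Wu} is a \ww{\G'}-equivalence for a class \w{\G'} of injective models containing \w{\uA{0}} (using that $\bY$ is $R$-good, which follows from simple connectivity), and then invokes \cite[Proposition 8.5]{BousC} to conclude \w[.]{\mapa(\bY,\uA{0})\simeq\mapa(\uTot\Wu,\uA{0})} Note that this step genuinely uses $\bY$, whereas your chain tries to avoid it.
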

\noindent See Theorem \ref{tresm}, Corollary \ref{ccommon}, and
Theorem \ref{trecogms} below.

\begin{mysubsection}{$\Theta$-algebras}
\label{sthetaalg}
If \w{\TsA} is an enriched sketch as above, applying \w{\pi\sb{0}} to each
of its mapping spaces yields a category \w[,]{\ThA:=\pi\sb{0}\TsA} which
is an ordinary (algebraic) \emph{sketch} in the sense of Ehresmann (see
\S \ref{dgalg}). If \w{\fX:\TsA\to\Sa} is a \Ama[,] composing with \w{\pi_{0}}
yields an \emph{\Aal} \w{\Lambda:=\pi\sb{0}\fX} \wwh that is, a product-preserving
functor \w[.]{\Lambda:\ThA\to\Seta} We can think of such an \Aal as an
algebraic version of a \Ama[.] Note that the category of simplicial
\Aal[s] has a model category structure as in \cite[II, \S 4]{QuiH},
which allows us to define free simplicial resolutions of $\Lambda$
(\S \ref{dcwres}).

For example, if \w{\eA=\HR{\Fp}}
(and \w[),]{\lambda=\aleph\sb{0}} \w{\ThA} is the homotopy category
of finite type \ww{\Fp}-GEMs, and a \Aal is just an algebra over the
mod $p$ Steenrod algebra, as in \cite[\S 1.4]{SchwU}.

Our main technical tool in this paper, which we hope will be of independent
use, is the following:
\end{mysubsection}

\begin{thmb}
If $\fX$ is a \Ama and \w{\Lambda=\pi_{0}\fX} is the corresponding \Aal[,]
then any algebraic CW-simplicial resolution \w{\Vd\to\Lambda}
can be realized by a cosimplicial space \w[.]{\Wu}
\end{thmb}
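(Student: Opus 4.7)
The plan is to build $\Wu$ by induction on its cosimplicial skeleton, exploiting the CW-freeness of $\Vd$ to lift the algebraic face and degeneracy maps of $\Vd$ to strict cosimplicial structure maps between objects of the enriched sketch $\TsA$.

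Since $\Vd\to\Lambda$ is a CW-resolution, for each $n$ the $\ThA$-algebra $V_{n}$ decomposes as a coproduct of its latching object $L_{n}\Vd$ with a free $\ThA$-algebra on a set $S_{n}$ of new generators, each carrying some prescribed degree $k_{s}$. By the $\pi_{0}$-Yoneda identification of free $\ThA$-algebras with products of the $\uA{k}$, such a free algebra is represented by the object $\bW\sp{n}:=\prod_{s\in S_{n}}\uA{k_{s}}$ of $\TsA$. Each algebraic face map $d_{i}:V_{n}\to V_{n-1}$ now translates, restricted to the new generators, into a $\pi_{0}$-class of a map $\bW\sp{n-1}\to\bW\sp{n}$ in $\TsA$, and similarly for the degeneracies; our task is to lift these homotopy classes to genuine maps of spaces strictly satisfying the cosimplicial identities.

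I would carry out this rigidification by Reedy-type induction on $n$. Suppose a strict $(n-1)$-truncated cosimplicial object in $\TsA$ has been built realizing the corresponding truncation of $\Vd$. To extend to degree $n$, one must choose coface maps $d\sp{i}:\bW\sp{n-1}\to\bW\sp{n}$ for $0\le i\le n$ whose $\pi_{0}$-classes agree with those prescribed by $\Vd$ and which are compatible with the cosimplicial structure already assembled on the lower-dimensional part. Since $\bW\sp{n}$ is a product of the $\uA{k_{s}}$ and $\fX$ preserves products, the problem splits, factor by factor, into one lifting problem per new generator $s\in S_{n}$: choose an actual map $\bW\sp{n-1}\to\uA{k_{s}}$ in the prescribed path component that restricts to a given map on the latching subobject. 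Codegeneracies are handled identically.

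The main obstacle is showing that each of these lifting problems admits a coherent solution at every inductive stage. The $\pi_{0}$-component of the obstruction vanishes automatically, since the classes derived from $\Vd$ satisfy the simplicial identities on the nose; the real content is in rigidifying higher coherent homotopies between the relevant compositions. This succeeds because (i) $V_{n}$ is free on its new generators, so every $\pi_{0}$-class compatible with the extension datum is realized by an actual map, and (ii) the enriched structure of $\TsA$ as a product-preserving simplicial subcategory of $\Topa$ provides the room needed to solve the Reedy matching problem at each step. Assembling the induction produces the desired cosimplicial space $\Wu$.
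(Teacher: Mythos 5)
Your skeleton (identify the CW basis objects $\oV{n}$ with products of the $\uA{k}$ via the Yoneda lemma for free algebras, then build $\Wu$ by induction on cosimplicial degree) matches the paper's starting point, but the proposal elides the step where all the work lies, and commits there to a strategy that does not go through as stated. You take $\bW\sp{n}$ to be the product $\prod_{s\in S_{n}}\uA{k_{s}}$ (times a latching part) and propose to choose cofaces into it satisfying the cosimplicial identities \emph{strictly}, asserting that the higher coherences can be solved because $V_{n}$ is free and the enrichment ``provides the room.'' That is precisely the point at issue. The algebraic data only determine $d\sp{0}$ on the new basis up to homotopy; the composites $d\sp{i}\circ d\sp{0}$ are only \emph{null-homotopic}, not zero; and identities such as $d\sp{1}d\sp{0}=d\sp{0}d\sp{0}$ cannot in general be arranged on the nose without changing the objects. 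The paper's proof is devoted entirely to this problem: each $\W{n}$ is built only as a \emph{weak} CW object (Definition \ref{dccwo}), with $\uW{n}\times M\sp{n}\Wu$ replaced by iterated homotopy pullbacks against the path objects $(\Omega\sp{n-k-1}\uW{n})\sp{\Delta[1]}$, and the maps $\eta\sp{k}$, $F\sp{k}$ there record the explicit nullhomotopies (higher-order cohomology operations, cf.\ \S \ref{scrho}) that make the identities hold strictly in the thickened objects. Moreover, the existence of the needed classes $\eta\sp{k}$ is not a consequence of freeness: it uses the \emph{acyclicity} of $\Vd\to\Lambda$ to kill the obstruction cycles $[a\sp{k-1}]$, a step entirely absent from your argument.

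A second gap: realizing $\Vd\to\Lambda$ means producing an augmentation $\fMT\bW\sp{0}\to\fX$ inducing $\vare\colon V_{0}\to\Lambda$ on $\pi_{0}$, and your construction never uses the mapping algebra $\fX$ at all, so it cannot produce one. The enriched datum $\fX$ (as opposed to the bare algebra $\Lambda$) enters the paper's proof exactly at the bottom stage $k=0$ of each descending induction, where nullhomotopies must be chosen as simplices of $\fX\lin{\Omega\sp{n-1}\uW{n}}$ and the augmentation is extended. Without this, even a successfully rectified $\Wu$ would realize only an abstract cosimplicial object with no comparison map to $\fX$, which is weaker than the theorem claims.
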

\noindent See Theorem \ref{tres} below.

\begin{warning}
We do not claim that the space \w{\bY:=\Tot\Wu} obtained by means of Theorem B in
fact realizes the \Ama $\fX$. In particular, if $\fX$ is an
\Ama structure on a simplicial set $X$, it need not be true that
\w[,]{X\simeq\mapa(\bY,\bA)} even for \w{\bA=\KR{n}}
(see \S \ref{rallow} below).
\end{warning}

However, it turns out that, for suitable fields $R$, the only obstruction to
realizability is a purely algebraic condition on
\w[.]{\Lambda=\pi\sb{0}\fX}  In particular, we show:

\begin{thmc}
When \w{\eA=\HR{R}} for \w{R=\QQ} or \w[,]{\Fp}
any simply-connected finite type \Ama $\fX$ is realizable by an
$R$-complete space $\bY$, unique up to $R$-equivalence.
\end{thmc}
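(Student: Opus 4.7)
The plan is to apply Theorem B to produce a candidate cosimplicial space $\Wu$, then verify that for $R = \QQ$ or $\Fp$ the obstruction flagged in the Warning --- the gap between realizing $\Lambda = \pi_0\fX$ on the nose and realizing the full $\TsA$-mapping algebra $\fX$ --- vanishes automatically under the simply-connected finite type hypothesis.

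Starting from $\fX$, I would set $\Lambda := \pi_0 \fX$ and choose an algebraic CW-simplicial resolution $\Vd \to \Lambda$ by free $\ThA$-algebras in the model category of simplicial $\ThA$-algebras. Theorem B realizes this resolution by a cosimplicial space $\Wu$, each cosimplicial degree of which is a product of Eilenberg-Mac~Lane spaces of the form $\KR{n_i}$. I would then define $\bY := \Tot \Wu$. Since $\Wu$ is levelwise an $\HR{R}$-GEM, $\bY$ sits at the top of a Bousfield-Kan $R$-resolution tower, and convergence of the associated cohomology spectral sequence --- which holds for $R = \QQ$ or $\Fp$ under the simply-connectedness and finite type assumptions --- identifies $\bY$ as an $R$-complete space.

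The central and hardest step is to show that the canonical comparison $\fX \to \mapa(\bY,-)$ is an equivalence of $\TsA$-mapping algebras, not merely an isomorphism after applying $\pi_0$. I would argue inductively along the skeletal filtration of the resolution $\Vd$, comparing $\fX$ with the mapping algebras attached to successive cosimplicial truncations of $\Wu$. At each stage the obstructions to extending the comparison lie in $\ThA$-algebra cohomology groups of $\Lambda$ with coefficients in loop spaces of the generators of $\Vd$. The key algebraic input --- and the reason $R = \QQ$ or $\Fp$ is essential --- is that over these coefficient rings any simply-connected finite type $\ThA$-algebra has enough freeness to force the higher cohomology groups to vanish in the relevant range: semisimplicity over $\QQ$ in characteristic zero, and Lannes-Schwartz style injectivity of the loop-space coefficient modules in the unstable module category over the mod $p$ Steenrod algebra. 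This is precisely the purely algebraic condition alluded to just before the theorem.

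For uniqueness, if $\bY'$ is another $R$-complete space realizing $\fX$, then Theorem A applied to the induced mapping algebra on $\mapa(\bY',\uA{0})$ (which coincides with $\fX$) reconstructs a cosimplicial space whose total space is $R$-equivalent to $\bY'$. Since that construction is functorial in $\fX$ alone, the resulting cosimplicial space agrees with $\Wu$ up to levelwise weak equivalence via a standard comparison of free CW-resolutions in simplicial $\ThA$-algebras, giving $\bY' \simeq \bY$ up to $R$-equivalence. Throughout, the main obstacle is the vanishing of the algebraic obstruction in the inductive step, and it is there that the specific homological properties of $\QQ$ and $\Fp$ enter in an essential way.
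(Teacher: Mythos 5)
Your overall architecture --- use Theorem B to produce $\Wu$ from a CW-resolution $\Vd\to\Lambda$, set $\bY:=\Tot\Wu$, and then upgrade the $\pi_0$-level comparison to an equivalence of mapping algebras --- matches the paper's, and you correctly isolate where the real work is. But the mechanism you propose for the central step is a genuine gap. You assert that the obstructions to extending $\fX\to\fMR\bY$ lie in $\ThA$-algebra (Andr\'e--Quillen type) cohomology groups which vanish by ``semisimplicity over $\QQ$'' and ``Lannes--Schwartz injectivity'' over $\Fp$. Neither vanishing statement is true: over $\QQ$ these cohomology groups are precisely where Massey products and non-formality phenomena live, and over $\Fp$ the injectives of the unstable module category are very special objects (Brown--Gitler modules and $H^{*}(BV)$), not the loop-shifted coefficient modules arising from the generators of $\Vd$. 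If such groups vanished wholesale, every simply-connected finite type unstable algebra over the Steenrod algebra would be realizable, which is false. Moreover, the ``purely algebraic condition'' alluded to before the theorem is not a cohomological vanishing at all: it is \emph{allowability} (Lemma \ref{lallow}), i.e.\ the requirement that each $\Lambda\lin{\KR{i}}$ be the $R$-dual $\Hm{M_{i}}$ of some $R$-module $M_{i}$ --- a condition on underlying modules which finite type guarantees and which can fail for $\aleph_{0}$-dimensional values (Remark \ref{rallow}).

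The paper's actual route through the hard step avoids obstruction theory entirely. The field hypothesis $R=\QQ$ or $\Fp$ enters through Bousfield's strong convergence theorem for the homology spectral sequence of the cosimplicial space $\Wu$, with $E^{2}_{m,t}=\pi^{m}H_{t}(\Wu;R)$ converging to $H_{t-m}(\Tot\Wu;R)$; acyclicity of $\Vd\to\Lambda$ plus the field duality of Lemma \ref{lcocycles} forces this $E^{2}$-term to be concentrated in the $m=0$ line, so the spectral sequence collapses, $\bY$ is simply-connected with $H_{*}(\bY;R)\cong\Hm{\Lambda}$, and $\bY\to\Wu$ is a weak $\G$-resolution for $\G$ the class of simplicial $R$-modules, whence $\bY$ is $R$-complete. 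The comparison of mapping algebras is then obtained from the zig-zag $\fMR\bY\leftarrow\|\fVd\|\to\fX$, where $\fVd:=\fMR\Wu$ and $\|-\|$ is the diagonal realization of a simplicial mapping algebra; both arrows are weak equivalences because the associated Quillen spectral sequences collapse for the same reason. Your uniqueness argument is acceptable in spirit, though the paper obtains it in one line from the fact that an $R$-equivalence between $R$-complete spaces is a weak equivalence. To repair your proof you would need to replace the claimed cohomological vanishing with an argument of this convergence-and-collapse type (or justify why the specific obstruction \emph{classes}, rather than the groups, vanish given that $\fX$ already exists as a topological structure).
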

\noindent See Theorem \ref{treal} below (which is somewhat more general)\vsm .

Theorem C implies that for \w{R=\QQ} or \w[,]{\Fp} a finite type
$R$-GEM $X$ which can be endowed with a simply-connected \Ama structure
(also involving deloopings of $X$) is realizable as a mapping space
\w{X\simeq\mapa(\bY,\KR{n})} for some $R$-complete $\bY$. Moreover, $\bY$ is
uniquely determined up to $R$-equivalence by the choice of \Ama structure.

\begin{mysubsection}{A new look at $R$-completion}
\label{snewpcompl}
The three theorems above, taken together, provide us with a new way of looking
at the concept of $R$-completion: more precisely, we have three notions
associated to any commutative ring $R$:
\begin{enumerate}
\renewcommand{\labelenumi}{(\alph{enumi})}
\item An \Ama $\fX$, for \w[,]{\eA=\HR{R}} with the accompanying algebraic
notion of the \Aal \w[;]{\Lambda=\pi\sb{0}\fX}
\item A cosimplicial $R$-resolution \w{\Wu} of the \Ama $\fX$, with the
accompanying algebraic notion of a free simplicial \Aal resolution
\w{\Vd} of \w{\pi\sb{0}\fX} (see \S \ref{sgrmc});
\item The realization \w{\Tot\Wu} of the $R$-resolution \w[,]{\Wu} which
is $R$-complete in the simply-connected case.
\end{enumerate}

When \w{R=\Fp} or $\QQ$, under mild assumptions on $\fX$ (e.g., when it is
simply connected and of finite type) we show that these notions are
equivalent, inasmuch as the original $\fX$ is in fact the \Ama of
\w[,]{\Tot\Wu} up to weak equivalence.

Moreover, each of these three notions has certain advantages:
\begin{enumerate}
\renewcommand{\labelenumi}{(\alph{enumi})}
\item The \Ama $\fX$ exhibits in an explicit form all the information about
a space $\bY$ which is retained by its $R$-completion.
\item The cosimplicial space \w[,]{\Wu} realizing the algebraic
resolution \w{\Vd} of \w[,]{H\sp{\ast}(\bY;R)} encodes the higher
order cohomology operations for $\bY$ in a visible manner
(see \S \ref{scrho} below)
\item The $R$-complete space \w{\Tot\Wu=R\sb{\infty}\bY} allows us to work with
a single space which retains all the above information.
\end{enumerate}
\end{mysubsection}

\begin{notation}\label{snac}
The category of topological spaces will be denoted by \w[,]{\Top} and that
of pointed topological spaces by \w[.]{\Topa}
The category of simplicial sets will be denoted by
\w[,]{\cS=s\Set} that of pointed simplicial sets by \w[,]{\Sa=s\Seta}
and that of pointed Kan complexes by \w[.]{\Sk}

Unless otherwise stated, $\C$ will be a pointed simplicial
model category in which every object is cofibrant (cf.\ \cite[II, \S 2]{QuiH}) \wh
so in particular it is left proper. We assume moreover that both $\otimes$
and $\times$ preserve cofibrations. The objects of $\C$ will be denoted by
boldface letters: \w[.]{\bX,\bY,\dotsc} The main example we shall be
concerned with is \w[,]{\Sa} and by \emph{space} we always mean
a (pointed) simplicial set.
\end{notation}

\begin{mysubsection}{Organization}\label{sorg}
In Section \ref{crmccs} we recall some facts about (co)simplicial objects,
and in particular Bousfield's resolution model category structure.
In Section \ref{ctma} we define the notions of enriched sketches and \ma[s,]
and in Section \ref{cmacr} we explain how this structure can be used to recover
$\bY$ from \w{X=\mapa(\bY,\bA)} by means of a suitable cosimplicial resolution
\w[,]{\bY\to\Wu} when \w{\bA=\KR{n}} for a field $R$.
In Section \ref{cmac} we modify the construction of this cosimplicial resolution
to obtain Theorem A.
In Section \ref{crstr} we prove our main technical result, Theorem B, showing
how any algebraic resolution of \w{\pi\sb{0}\fX} can be realized in \w[.]{c\C}
This is used in Section \ref{crma} to prove Theorem C, which allows us to
recognize mapping spaces of the form \w{\mapa(\bY,\KR{n})} and recover
$\bY$ up to $R$-completion.
\end{mysubsection}

\begin{ack}
We wish to thank Pete Bousfield for many useful comments and elucidations
of his work, and Paul Goerss for a helpful pointer.
\end{ack}

%
%
\sect{The Resolution Model Category of Cosimplicial Objects}
\label{crmccs}

The main technical tool for reconstructing the source $\bY$ of a
mapping space \w{X:=\map_{\C}(\bY,\bA)} in a pointed simplicial model
category $\C$, such as \w[,]{\Sa} is the construction of a suitable
cosimplicial resolution \w{\Wu} of the putative \w[.]{\bY\in\C}

The proper framework for obtaining such a \w{\Wu} is Bousfield's resolution
model category of cosimplicial objects over $\C$, which generalizes and
dualizes the Dwyer-Kan-Stover theory of the $E\sp{2}$-model category of simplicial
spaces (cf.\ \cite{DKStE}).

\begin{mysubsection}{Simplicial and cosimplicial objects}
\label{sscso}
We first collect some standard facts and constructions related to
(co)simplicial objects in any category $\C$:

Let $\Del$ denote the category of finite ordered sets and order-preserving
maps (cf.\ \cite[\S 2]{MayS}), and \w{\Dp} the subcategory with the same objects,
but only monic maps. A \emph{cosimplicial object} \w{\Gu} in a
category $\C$ is a functor \w[,]{\Del\to\C} and a \emph{restricted}
cosimplicial object is a functor \w[.]{\Dp\to\C} More concretely,  we write
\w{G\sp{n}} for the value of \w{\Gu} at the ordered set \w[.]{\bn=(0<1<\dotsc<n)}
The maps in the diagram \w{\Gu} are generated by the \emph{coface} maps
\w{d\sp{i}=d\sp{i}\sb{n}:G\sp{n}\to G\sp{n+1}} \wb[,]{0\leq i \leq n+1} as well
as \emph{codegeneracy} maps \w{s\sp{j}=s\sp{j}\sb{n}:G\sp{n}\to G\sp{n-1}}
\wb{0 \leq j<n} in the non-restricted case, satisfying the usual cosimplicial
identities.

Dually, a \emph{simplicial object} \w{\Gd} in $\C$ is a functor
\w[.]{\Del\op\to\C} The category \w{\C\sp{\Del}} of cosimplicial objects over
$\C$ will be denoted by \w[,]{c\C} and the category \w{\C\sp{\Del\op}} of
simplicial objects over $\C$ will be denoted by \w[.]{s\C}

There are natural embeddings \w{\cu{-}:\C\to c\C} and \w[,]{\cd{-}:\C\to s\C}
defined by letting \w{\cu{A}} denote the constant cosimplicial object
which is $A$ in every cosimplicial dimension, and similarly for \w[.]{\cd{A}}
\end{mysubsection}

\begin{mysubsection}{Latching and matching objects}
\label{slmo}
For a cosimplicial object \w{\Gu\in c\C} in a complete category $\C$,
the $n$-th \emph{matching object} for \w{\Gu} is defined to be
\begin{myeq}\label{eqmatch}
M\sp{n}\Gu~:=~\lim\sb{\phi:\bn\to \bk}\,G\sp{k}~,
\end{myeq}
\noindent where $\phi$ ranges over the surjective maps \w{\bn\to\bk} in
$\Del$. There is a natural map \w{\zeta\sp{n}:G\sp{n}\to M\sp{n}\Gu} induced
by the structure maps of the limit, and any iterated codegeneracy map
\w{s\sp{I}=\phi\sb{\ast}:G\sp{k}\to G\sp{n}} factors as
\begin{myeq}\label{equnivcod}
s\sp{I}~=~\proj\sb{\phi}\circ\zeta\sp{n}~,
\end{myeq}
\noindent where \w{\proj\sb{\phi}:M\sp{n}\Gu\to G\sp{k}} is the structure
map for the copy of \w{G\sp{k}} indexed by $\phi$ (cf.\ \cite[X,\S 4.5]{BKanH})\vsn.

Similarly, the $n$-th \emph{latching object} for \w{\Gu\in c\C} is the colimit
\begin{myeq}\label{eqlatch}
L\sp{n}\Gu~:=~\colim\sb{\theta:\bk\to\bn}\,G\sp{k}~,
\end{myeq}
\noindent where $\theta$ ranges over the injective maps \w{\bk\to\bn} in
$\Del$ (for \w[),]{k < n} with \w{\sigma\sp{n}:L\sp{n}\Gu\to G\sp{n}} defined by
the structure maps of the colimit.

These two constructions have analogues for a simplicial object \w{\Gd} over a
cocomp\-lete category $\C$: the \emph{latching object}
\w[,]{L\sb{n}\Gd:=\colim\sb{\theta:\bk\to\bn}G\sb{k}} and the
\emph{matching object} \w[,]{M\sb{n}\Gd:=\lim\sb{\phi:\bn\to\bk} G\sb{k}}
equipped with the obvious canonical maps.
\end{mysubsection}

\begin{defn}\label{dmco}
If $\C$ is pointed and complete, the $n$-th \emph{Moore chains} object
of \w{\Gd\in s\C} is defined to be:
\begin{myeq}\label{eqmoor}
C\sb{n}\Gd~:=~\cap\sb{i=1}\sp{n}\Ker\{d\sb{i}:G\sb{n}\to G\sb{n-1}\}~,
\end{myeq}
\noindent with differential \w[.]{\partial\sb{n}\sp{\Gd}=\partial\sb{n}:=
(d\sb{0})\rest{C\sb{n}\Gd}:C\sb{n}\Gd\to C\sb{n-1}\Gd}
The $n$-th \emph{Moore cycles} object is
\w[.]{Z\sb{n}\Gd:=\Ker(\partial\sb{n}\sp{\Gd})}

Dually, if $\C$ is pointed and cocomplete, the $n$-th \emph{Moore cochains}
object of \w[,]{\Gu\in s\C} written \w[,]{C\sp{n}\Gu} is defined to be
the colimit of:
\mydiagram[\label{eqmoorecc}]{
\coprod\sb{i=1}\sp{n}\,G\sp{n-1} \ar[d] \ar[rr]\sp{\bot\sb{i}\,d\sp{i}} &&
G\sp{n}\\
\ast &&
}
\noindent with differential \w{\delta\sp{n}:C\sp{n-1}\Gu\to C\sp{n}\Gu}
induced by \w[.]{d\sp{0}}
\end{defn}

\begin{defn}\label{dscwo}
A simplicial object \w{\Gd\in s\C} over a pointed category $\C$ is called a
\emph{CW object} if it is equipped with a \emph{CW basis}
\w{(\oG{n})\sb{n=0}\sp{\infty}} in $\C$ such that
\w[,]{G\sb{n}=\oG{n}\amalg L\sb{n}\Gd} and \w{d\sb{i}\rest{\oG{n}}=0}
for \w[.]{1\leq i\leq n} In this case
\w{\odz{G\sb{n}}:=d\sb{0}\rest{\oG{n}}:\oG{n}\to G\sb{n-1}} is called the
attaching map for \w[.]{\oG{n}} By the simplicial identities \w{\odz{G\sb{n}}}
factors as
\begin{myeq}\label{eqattach}
\odz{G\sb{n}}:\oG{n}~\to~Z\sb{n-1}\Gd~\subset~G\sb{n-1}~.
\end{myeq}

In this case we have an explicit description
\begin{myeq}\label{eqslatch}
L\sb{n}\Gd~:=~
\coprod\sb{0\leq k\leq n}~\coprod\sb{0\leq i\sb{1}<\dotsc<i\sb{n-k-1}\leq n-1}~
\oG{k}
\end{myeq}
\noindent for its $n$-th latching object, in which the iterated degeneracy map
\w[,]{s\sb{i\sb{n-k-1}}\dotsc s\sb{i\sb{2}}s\sb{i\sb{1}}} restricted to the
basis \w[,]{\oG{k}} is the inclusion into the copy of \w{\oG{k}} indexed by
\w[.]{(i\sb{1},\dotsc,i\sb{n-k-1})}
\end{defn}

A cosimplicial CW object may be defined analogously, but we shall only need
the following variant:

\begin{defn}\label{dccwo}
A cosimplicial pointed space \w{\Wu\in c\Sa} equipped with
a \emph{CW basis} \w{\uW{n}} \wb{n\geq 0} in \w{\Sa} is called a
\emph{weak CW object} if
\begin{enumerate}
\renewcommand{\labelenumi}{(\alph{enumi})}
\item For each \w[,]{n\geq 0} we have a weak equivalence
\w[,]{\varphi\sp{n}:\bW\sp{n}\xra{\simeq}\uW{n}\times M\sp{n}\Wu} and we set
\begin{myeq}\label{eqcwstru}
\oph\sp{n}~:=~\proj\sb{\uW{n}}\circ\varphi\sp{n}:\bW\sp{n}~\to~\uW{n}~.
\end{myeq}
\noindent where \w{\proj\sb{\uW{n}}:\bW\sp{n}\to\uW{n}} is the projection.
\item \w{\oph\sp{n}\circ d\sp{i}\sb{n-1}\sim 0} for \w[.]{1\leq i\leq n}
\item If we define the \emph{attaching map} for \w{\uW{n}} to be
\w[,]{\udz{n-1}:=\oph\sp{n-1}\circ d\sp{0}\sb{n-1}:\bW\sp{n-1}\to\uW{n}}
we require that it be a ``Moore cochain'' in the sense that
\begin{myeq}\label{eqvantwo}
\udz{n-1}\circ d\sp{i}\sb{n-2}~=~0
\end{myeq}
\noindent for all \w[.]{1\leq i\leq n-2}
\end{enumerate}
\end{defn}

\begin{remark}\label{rsmcat}
Recall that a \emph{simplicial} model category $\C$ is one in which,
for each (finite) \w{K\in\cS} and \w[,]{X\in\C} we have objects
\w{X\otimes K} and \w{X\sp{K}} in $\C$ equipped with appropriate
adjunction-like isomorphisms and axiom SM7 (see \cite[II, \S 1-2]{QuiH}).
In particular, such model categories are simplicially enriched.
\end{remark}

\begin{mysubsection}{Reedy model structure}
\label{sremc}
If $\C$ is a model category, the Reedy model structure on \w{c\C}
(cf.\ \cite[\S 2.2]{BousC}) is defined by letting a cosimplicial map
\w{f:\Xu\to \Yu} in \w{c\C} be:
\begin{enumerate}[(i)]
\item a \emph{Reedy weak equivalence} when \w{f\colon X^n \to Y^n} is a
weak equivalence in $\C$ for $n\geq 0$;
\item a \emph{Reedy cofibration} when \w{X^n\coprod\sb{L^n\Xu}L^n\Yu\to Y^n}
is a cofibration in $\C$ for $n\geq 0$;
\item  a \emph{Reedy fibration} when \w{X^n\to Y^n\prod\sb{M^n\Xu}M^n\Xu}
is a fibration in $\C$ for $n\geq 0$.
\end{enumerate}

The Reedy model category structure on \w{s\C} is defined dually (see
\cite[\S 15.3]{PHirM}).
\end{mysubsection}

\begin{defn}\label{dtotal}
The \emph{total space} \w{\Tot\Wu} of a cosimplicial
space \w{\Wu\in c\cS} is defined to be the simplicial mapping space
\w[,]{\map_{c\cS}(\Du,\Wu)} where \w{\Du\in c\cS} has
\w{\Del\sp{n}:=\Delta[n]\in\cS} (the standard $n$-simplex) \wh see
\cite[X, \S 3]{BKanH}.

We write \w[,]{\uTot\Wu:=\Tot\Vu} where \w{\Wu\to\Vu} is a functorial Reedy
fibrant replacement.
\end{defn}

\begin{mysubsection}{$\G$-resolution model structure}
\label{sgrmc}
Let $\G$ be a class of homotopy group objects in a pointed model
category $\C$, closed under loops. We shall be mainly interested in the case
where $\G$ consists of the \ww{\Omega^{\infty}}-spaces in a class of
$\Omega$-spectra in \w[.]{\C=\Sa}

A map \w{i:A\to B} in \w{\ho\C} is
called $\G$-\emph{monic} if \w{i\sp{\ast}:[B, G]\to [A,G]} is onto for
each \w[.]{G \in \G} An object $Y$ in $\C$ is called \emph{$\G$-injective} if
\w{i\sp{\ast}:[B, Y]\to [A,Y]} is onto for each $\G$-monic map \w{i:A \to B}
in \w[.]{\ho\C} A fibration in $\C$ is called \emph{$\G$-injective} if
it has the right lifting property for the $\G$-monic cofibrations in $\C$.

The homotopy category \w{\ho\C} is said to have \emph{enough $\G$-injectives}
if each object is the source of a $\G$-monic map to a $\G$-injective target.
In this case $\G$ is called a class of \emph{injective models} in \w[.]{\ho\C}

Recall that a homomorphism in the category \w{s\Grp} of simplicial groups
is a weak equivalence or fibration when its underlying map in $\cS$ is such.
A map \w{f:\Xu\to \Yu} in \w{c\C} is called
\begin{enumerate}[(i)]
 \item a $\G$-\emph{equivalence} if \w{f\sp{\ast}:[\Yu,G]\to[\Xu,G]}
is a weak equivalence in \w{s\Grp} for each \w[;]{G\in \G}
\item a $\G$-\emph{cofibration} if $f$ is a Reedy cofibration and
\w{f\sp{\ast}:[\Yu,G]\to [\Xu,G]} is a fibration in \w{s\Grp} for each
\w[;]{G \in \G}
\item a $\G$-\emph{fibration} if
\w{f:X\sp{n}\to Y\sp{n}\times\sb{M\sp{n}\Yu} M\sp{n}\Xu} is a
$\G$-injective fibration in $\C$ for each \w[.]{n\geq 0}
\end{enumerate}

In \cite[Theorem 3.3]{BousC}, Bousfield showed that if $\C$ is a left
proper pointed model category and $\G$ is a class of injective models in
\w[,]{\ho(\C)} the above defines a left proper pointed simplicial model
category structure on \w[.]{c\C}
\end{mysubsection}

\begin{defn}\label{dwgf}
Given a class $\G$ of homotopy group objects in a model category $\C$ as above,
a cosimplicial object \w{\Wu\in c\C} is called \emph{weakly $\G$-fibrant} if
it is Reedy fibrant, and every \w{W\sp{n}} is in $\G$ \wb[.]{n\geq 0}
A \emph{weak $\G$-resolution} of an object \w{Y\in\C} is a weakly
$\G$-fibrant \w{\Wu} which is $\G$-equivalent to \w{\cu{Y}}
(cf.\ \S \ref{sscso}). See \cite[\S 6]{BousC}.
\end{defn}

%
%
\sect{Enriched sketches and mapping algebras}
\label{ctma}

We now set up the categorical framework needed to describe the relevant
extra structure on a mapping space.

\begin{defn}\label{dgalg}
Let $\Theta$ be an \emph{sketch}, in the sense of
Ehresmann (cf.\ \cite{EhreET}, \cite[\S 5.6]{BorcH2}): that
is, a small pointed category with a distinguished set $\PP$ of (small)
products (including the empty product $\ast$).  A \emph{\Tal} is a functor
\w{\Lambda:\Theta\to\Seta} which preserves the products in $\PP$. We think of
a map \w{\phi:\prod\sb{i<\kappa}\,a\sb{i}\to\prod\sb{j<\lambda}\,b\sb{j}}
in $\Theta$ as representing an $\lambda$-valued $\kappa$-ary operation
on \Tal[s,] with gradings indexed by \w{(a\sb{i})\sb{i<\kappa}} and
\w[,]{(b\sb{j})\sb{j<\lambda}} respectively.

The category of \Tal[s] is denoted by \w[.]{\TAlg}
If each object of $\Theta$ is uniquely representable (up to order) as a
product of elements in a set \w[,]{\OO\subseteq\Obj\Theta} there is a forgetful
functor \w{U:\TAlg\to\Set\sp{\OO}} into the category of $\OO$-graded sets,
with left adjoint the \emph{free \Tal} functor \w[.]{F:\Set\sp{\OO}\to\TAlg}
\end{defn}

\begin{example}\label{eggalg}
The simplest kind of a sketch is a \emph{theory} in the sense of Lawvere
(cf.\ \cite{LawvF}), in which \w{\Obj\Theta=\NN} is generated under products
by a single object, so that \Tal[s] are simply sets with additional structure.
For example, the theory $\fG$ whose algebras are groups is just the opposite
category of the homotopy category of finite wedges of circles.
\end{example}

\begin{defn}\label{dfgsketch}
We define a $\fG$-\emph{sketch} to be a sketch $\Theta$
equipped with an embedding of sketches \w[,]{\fG\sp{\OO}\hra\Theta} for
$\OO$ as above. In this case, any \Tal $\Lambda$ has a
natural underlying $\OO$-graded group structure. We do not
require the operations of a $\fG$-sketch to be homomorphisms (that
is, commute with the $\fG$-structure).
\end{defn}

\begin{example}\label{eggth}
Almost all varieties of (graded) universal algebras, in the sense of
\cite[V, \S 6]{MacLC} \wh such as groups, associative, or Lie algebras, and
so on \wh have an underlying (graded) group structure, so they are
categories of \Tal[s] for a suitable $\fG$-sketch $\Theta$.
\end{example}

\begin{prop}\label{psimptal}
If $\Theta$ is a $\fG$-sketch, the category \w{s\TAlg} of simplicial
\Tal[s] has a model category structure, in which the weak equivalences
and fibrations are defined objectwise.
\end{prop}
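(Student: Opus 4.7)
The plan is to apply the standard transfer argument along the free/forgetful adjunction $F \dashv U$ from Definition \ref{dgalg}, extended levelwise to simplicial objects so as to give an adjoint pair with $U: s\TAlg \to \Sa\sp{\OO}$ and $F$ its left adjoint. I would declare a map $f: V\sb{\bullet} \to W\sb{\bullet}$ in $s\TAlg$ to be a weak equivalence (resp.\ fibration) precisely when $Uf$ is such in $\Sa\sp{\OO}$ (with its product of Kan--Quillen model structures), and let cofibrations be determined by the left lifting property against the trivial fibrations.

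First I would verify that $s\TAlg$ is bicomplete: limits are formed objectwise in $\Sa\sp{\OO}$ since \Tal[s] are defined by a product-preservation condition, while colimits exist by the general theory of categories of models of a sketch (cf.\ \cite[\S 5.6]{BorcH2}). The 2-out-of-3 and retract axioms then follow immediately from the corresponding axioms in $\Sa\sp{\OO}$. For the remaining axioms I would run the small object argument on $FI$ and $FJ$, where $I$ and $J$ are the usual sets of generating (trivial) cofibrations in $\Sa\sp{\OO}$; this is legitimate because $U$ preserves filtered colimits, a standard property of algebraic categories (following, for instance, from $\TAlg$ being locally presentable for $\Theta$ small).

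The main obstacle---and precisely the reason for the $\fG$-sketch hypothesis of Definition \ref{dfgsketch}---is verifying that every relative $FJ$-cell complex is a weak equivalence, equivalently that pushouts of maps in $FJ$ along arbitrary maps in $s\TAlg$ have weakly contractible underlying cofibres. For a bare sketch this step can fail, but the underlying $\OO$-graded simplicial \emph{group} structure present on every simplicial \Tal[] (coming from the embedding $\fG\sp{\OO}\hra\Theta$) permits the contraction argument of Quillen \cite[II, \S 4]{QuiH}: one identifies each such pushout with a levelwise free product in the appropriate sense, having a levelwise contractible free factor, and then uses the ``multiplication by an inverse'' trick to produce an explicit simplicial homotopy witnessing contractibility of $U$ applied to the pushout. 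With this step in hand the standard transfer theorem applies and yields the cofibrantly generated model structure. The universal property of the definition makes it clear that weak equivalences and fibrations are indeed defined objectwise, as claimed.
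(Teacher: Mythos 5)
Your argument is essentially the paper's: the paper simply cites \cite[\S 6]{BPescF} (a slight generalization of \cite[II, \S 4]{QuiH}), and what you have written out is exactly that Quillen-style transfer along the free/forgetful adjunction, with the $\fG$-sketch hypothesis supplying the underlying simplicial group structure needed for the crucial ``trivial cofibration implies weak equivalence'' step. The proposal is correct (modulo the minor point that the underlying category should be graded pointed simplicial sets $\Seta\sp{\OO}$ rather than $\Sa\sp{\OO}$), so no further comment is needed.
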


\begin{proof}
See \cite[\S 6]{BPescF}, which is a slight generalization of \cite[II,\S 4]{QuiH}.
\end{proof}

\begin{mysubsection}{Enriched sketches and algebras}
There is also a enriched version of the notions defined above, introduced
in \cite{BBlaC}, in which we assume that the theory, or sketch, is simplicially
enriched, and the algebras over it are simplicial.  This takes place in the
context of a simplicially enriched model category. Note that in fact
any model category $\C$ can be enriched over $\cS$ \wh that is, for each
\w[,]{X,Y\in\C} there is a simplicial mapping space \w[,]{\map\sb{\C}(X,Y)}
with continuous compositions, such that \w{[X,Y]\sb{\ho\C}} is equal to its
set of components \w{\pi\sb{0}\map\sb{\C}(X,Y)} (cf.\ \cite{DKanF}).
\end{mysubsection}

\begin{defn}\label{deth}
Let $\C$ be a simplicial model category as in \S \ref{snac}, and $\lambda$ some
limit cardinal (to be determined by the context \wh see Remark \ref{rlambda}
below). An \emph{enriched sketch} $\bT$ is a small full sub-simplicial category
of $\C$, closed under loops (cf.\ \cite[I, \S 2]{QuiH}), with a distinguished set
of products $\PP$. We assume all objects in $\bT$ are fibrant
(and cofibrant) homotopy group objects in $\C$.
\end{defn}

\begin{example}\label{egeth}
Let \w{\F=\{\eA\sp{i}\}\sb{i\in I}} be a set of $\Omega$-spectra, so each
\w{\uA{n}\sp{i}\cong\Omega\uA{n+1}\sp{i}} is an \ww{\Omega\sp{\infty}}-space, and
let \w{\TsF} denote the full sub-simplicial category of $\C$ whose objects
are products of the spaces \w{\uA{n}\sp{i}} of cardinality \www[.]{<\lambda}
We write
\begin{myeq}\label{eqhatf}
\hF~:=~\{\uA{n}\sp{i}~:\ \eA\sp{i}\in\F,\,n\in\ZZ\}~\subseteq~\Obj\TsF~.
\end{myeq}

When \w{\F=\{\eA\}} consists of a single spectrum, we write \w{\TsA} for
\w[.]{\TsF} In particular, for any ring (or abelian group) $R$ we let
\w{\eA=\HR{R}} be the corresponding Eilenberg-Mac~Lane
spectrum, and denote the resulting enriched sketch \w{\TsA} (for
\w[)]{\lambda=\aleph\sb{0}} by \w[.]{\TsR} Thus the objects of \w{\TsR}
are finite type $R$-GEMs (generalized Eilenberg-Mac~Lane spaces) \wh that is,
spaces of the form \w{\prod\sb{i=1}^N\,\KR{m_i}} \wb[.]{m\sb{i}\geq 1}

Note that in this case we may assume that each object in \w{\TsR} is a strict
abelian group object.
\end{example}

\begin{remark}\label{rlambdaone}
We can always assume that the cardinal $\lambda$ for \w{\TsF} is $1$, by including
all products of $\Omega$-spectra of the requisite cardinality in the original list
$\F$ itself.  The reason for specifying $\lambda$ in the definition of an enriched
sketch is because we want to think of elements in
\w{\map\sb{\bT}(\prod\sb{i=0}\sp{n}\bB\sb{i},\bB)} as continuous $n$-ary
operations (as in the discrete case \wh cf.\ \S \ref{dgalg}). If we set
\w[,]{\lambda:=1} we must list the set $\PP$ of distinguished products in \w{\TsF}
explicitly.
\end{remark}

\begin{defn}\label{dpathloop}
In a pointed simplicial model category $\C$ (cf.\ \S \ref{rsmcat}), the
inclusions \w{i\sb{0},i\sb{1}:\ast\hra \Delta[1]} induce natural
``evaluation maps'' \w[,]{ev\sb{0},\ev\sb{1}:\bX\sp{\Delta[1]}\epic\bX}
which are trivial fibrations, for any \w[.]{\bX\in\C}  This allows one to
define the \emph{path} and \emph{loop} objects in $\C$ by the pullback diagrams:
\mydiagram[\label{eqpathloop}]{
\ar @{} [drr] |<<<<<{\framebox{\scriptsize{PB}}}
P\bX \ar@{^{(}->}[rr] \ar@{->>}[d]\sb{\simeq} &&
\bX\sp{\Delta[1]} \ar@{->>}[d]\sp{\simeq}\sb{\ev\sb{0}}&&
\ar @{} [drr] |<<<<<{\framebox{\scriptsize{PB}}}
\Omega\bX \ar@{^{(}->}[rr] \ar@{->>}[d] && P\bX \ar@{->>}[d]\sb{\ev\sb{1}} \\
\ast \ar@{^{(}->}[rr] && \bX && \ast \ar@{^{(}->}[rr] && \bX
}

These will also be our models for simplicial path and loop spaces \w{PK}
and \w{\Omega K} for any Kan complex \w[.]{K\in\Sk}
\end{defn}

\begin{defn}\label{dma}
For $\C$ a model category as in \S \ref{snac}, and \w{\bT\subseteq\C} an
enriched sketch as above, a \emph{\Tma} is a pointed simplicial functor
\w[,]{\fX:\bT\to\Sa} written \w{\fX:\bB\mapsto\fX\lin{\bB}} for any
\w[,]{\bB\in\bT} taking values in Kan complexes, and satisfying the
following three conditions:
\begin{enumerate}
\renewcommand{\labelenumi}{(\alph{enumi})~}
\item The natural map \w{\fX\lin{\prod\sb{i<\lambda}\,\bB\sb{i}}\to
\prod\sb{i<\lambda}\,\fX\lin{\bB\sb{i}}} is an isomorphism for products
\w{\prod\sb{i<\lambda}\bB\sb{i}} in our distinguished set $\PP$.
\item Using the convention that \w{\fX\lin{\bB\sp{K}}:=(\fX\lin{\bB})\sp{K}}
for any finite simplicial set $K$, and that
\w[,]{\fX\lin{P\bB}:=P\fX\lin{\bB}} we require that $\fX$ preserve the
right hand pullback squares of \wref{eqpathloop} for all \w[,]{\bX\in\bT}
so that we also have a natural identification
$$
\fX\lin{\Omega\bB}~=~\Omega\fX\lin{\bB}~.
$$
\item Any cofibration \w{i:\bB\hra\bB'} in $\bT$ induces an inclusion
\w{i\sb{\#}:\fX\lin{\bB}\hra\fX\lin{\bB'}} for all \w[.]{\bB\in\bT}
\end{enumerate}

The category of \Tma[s] will be denoted by \w[.]{\MT}
\end{defn}

\begin{defn}\label{dmapa}
For a given object \w[,]{\bY\in\C} we have a \emph{realizable} \Tma
\w{\fMT\bY} defined for any \w{\bB\in\bT} by
\w[.]{\fMT\bY\lin{\bB}:=\map\sb{\C}(\bY,\bB)} When \w{\C=\Sa} and
\w[,]{\bT=\TsR} we shall denote this by \w[.]{\fMR\bY}
The realizable \Tma \w{\fMT\bB} for \w{\bB\in\bT} will be called \emph{free}.
\end{defn}

\begin{lemma}[cf.\ \protect{\cite[8.17]{BBlaC}}]\label{lfreema}
If $\fY$ is an \Tma and \w{\fMT\bB} is a free \Tma (for \w[),]{\bB\in\bT}
there is a natural isomorphism
$$
\Phi:\map\sb{\MT}(\fMT\bB,\fY)~\xra{\cong}~\fY\lin{\bB}~,
$$
\noindent with
$$
\Phi(\fff)=\fff(\Id\sb{\bB})\in\fY\lin{\bB}\sb{0}~~\text{for any}~~
\fff\in\Hom\sb{\MT}(\fMT\bB,\fY)=\map\sb{\MT}(\fMT\bB,\fY)\sb{0}~.
$$
\end{lemma}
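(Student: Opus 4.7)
The plan is to recognize this statement as the enriched Yoneda lemma applied to the representable simplicial functor $\fMT\bB=\map\sb{\bT}(\bB,-)$, restricted to the full subcategory $\MT$ of simplicial functors $\bT\to\Sa$ satisfying Definition \ref{dma}. So my strategy is to write down an explicit inverse $\Psi$ to $\Phi$ and perform the two standard Yoneda verifications, while keeping track of the simplicial enrichment and of the fact that morphisms in $\MT$ are simplicial natural transformations.

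First I would define the inverse $\Psi:\fY\lin{\bB}\to\map\sb{\MT}(\fMT\bB,\fY)$. On $0$-simplices, send $y\in\fY\lin{\bB}\sb{0}$ to the simplicial natural transformation whose component at $\bC\in\bT$ is the composite
$$
\Psi(y)\sb{\bC}:\ \map\sb{\bT}(\bB,\bC)\ \xra{\fY\lin{-}}\ \map\sb{\Sa}(\fY\lin{\bB},\fY\lin{\bC})\ \xra{\ev\sb{y}}\ \fY\lin{\bC},
$$
where $\fY\lin{-}$ is the enriched functoriality of $\fY$ and $\ev\sb{y}$ is evaluation at $y$. On higher simplices, replace $\ev\sb{y}$ with precomposition by $y:\Delta[n]\to\fY\lin{\bB}$, producing an $n$-simplex of the mapping space on the left. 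Naturality in $\bC$ follows at once from functoriality of $\fY$, and simpliciality from the simplicial enrichment axioms on $\C$ (hence on $\bT\subseteq\C$); no separate verification of Definition \ref{dma}(a)--(c) is required, since those are conditions on the functor $\fY$, not on morphisms out of $\fMT\bB$.

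The two Yoneda calculations are then formal. One direction is immediate:
$$
(\Phi\circ\Psi)(y)\ =\ \Psi(y)\sb{\bB}(\Id\sb{\bB})\ =\ \fY\lin{\Id\sb{\bB}}(y)\ =\ y.
$$
For the other direction, given $\fff:\fMT\bB\to\fY$ and $\phi\in\map\sb{\bT}(\bB,\bC)\sb{0}$, applying the naturality square for $\fff$ at the morphism $\phi:\bB\to\bC$ to the element $\Id\sb{\bB}\in\fMT\bB\lin{\bB}\sb{0}$ yields
$$
(\Psi\circ\Phi)(\fff)\sb{\bC}(\phi)\ =\ \fY\lin{\phi}\bigl(\fff\sb{\bB}(\Id\sb{\bB})\bigr)\ =\ \fff\sb{\bC}\bigl(\fMT\bB\lin{\phi}(\Id\sb{\bB})\bigr)\ =\ \fff\sb{\bC}(\phi),
$$
since $\fMT\bB\lin{\phi}$ is post-composition with $\phi$, carrying $\Id\sb{\bB}$ to $\phi$. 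Naturality of $\Phi$ in $\fY$ (and in $\bB$) is evident from the formula $\fff\mapsto\fff(\Id\sb{\bB})$.

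The only point requiring genuine care, rather than being purely formal, is to check that the bijections of $0$-simplices assemble into an isomorphism of simplicial sets, compatibly with the face and degeneracy operators; this reduces to the standard argument for the simplicially enriched Yoneda lemma applied to $\bT$, and I do not anticipate a real obstacle.
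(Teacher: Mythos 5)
Your proposal is correct and takes the same route as the paper, which simply invokes the strong (enriched) Yoneda lemma from Kelly; you have merely written out the standard explicit inverse and the two verifications that the citation encapsulates. Your remark that morphisms in $\MT$ are just simplicial natural transformations, so that conditions (a)--(c) of the definition of a \Tma impose no extra constraint on $\map\sb{\MT}(\fMT\bB,\fY)$, is exactly the point needed to reduce the claim to the enriched Yoneda lemma.
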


\begin{proof}
This follows from the strong Yoneda Lemma for enriched categories (see
\cite[2.4]{GKellyEC}).
\end{proof}

\begin{defn}\label{damastruc}
Given an $\Omega$-spectrum \w{\eA=(\uA{n})\sb{n\in\bZ}} in a model category
$\C$ as in \S \ref{snac}, we have an associated enriched sketch \w[,]{\bT=\TsA}
whose objects are of the form \w{\bB=\prod_{i\in I}\uA{n\sb{i}}} with
\w[.]{|I|<\lambda} In this case an \emph{\Ama structure} on a simplicial set
$X$ is a \Tma $\fX$ with
\begin{myeq}\label{eqamastruc}
\fX(\prod\sb{i\in I}\uA{n_{i}})=\prod\sb{i\in I}\Omega\sp{-n_{i}}X~,
\end{myeq}
\noindent which implicitly involves choices of deloopings of
\w[.]{X=\fX\lin{\uA{0}}}

When $\eA$ is only one of a set $\F$ of $\Omega$-spectra, an
\emph{\Fma structure} on $X$ is a \Fma $\fX$ satisfying
\wref[,]{eqamastruc} though in this case there is no simple formula
for describing \w{\fX\lin{\uB{n}}} in terms of \w{\fX\lin{\uA{0}}=X} for
\w[.]{\eA\neq\eB\in\F}
\end{defn}

\begin{remark}\label{rstrict}
The \Tma structures we define here are rigid, in the sense that the
action of the mapping spaces between objects of $\bT$ is strict.
In particular, the fact that \w{X=\map_{\C}(\bY,\uA{0})} has a \Ama structure
has no implications for any \w[.]{X'\simeq X} This defect can be remedied by
defining a suitable notion of a \emph{lax} \Tma[,] as was done in the
dual case in \cite[\S 6]{BBDoraR}.
\end{remark}

\begin{mysubsection}{The associated algebraic sketch}
\label{saos}
To any enriched sketch $\bT$ in a simplicial model category $\C$ we can
associate an \emph{algebraic} $\fG$-sketch \w[,]{\Theta:=\pi\sb{0}\bT}
with the same objects as $\bT$, where
\w[.]{\Hom\sb{\Theta}(A,B):=\pi\sb{0}\map\sb{\bT}(A,B)}

A \Tal \w{\Lambda:\pi\sb{0}\bT\to\Seta} is called \emph{enrichable} if it is
of the form \w{\Lambda\sb{\fX}:=\pi\sb{0}\fX} for some \w{\fX\in\MT} (not
necessarily unique).

We define a  map of \Tma[s] \w{f:\fX\to\fY} to be a \emph{weak equivalence}
if it induces an isomorphism \w{f\sb{\#}:\pi\sb{0}\fX\to\pi\sb{0}\fY} of
the corresponding \Tal[s.]
\end{mysubsection}

\begin{defn}\label{dfralg}
A \Tal \w{\Lambda:\pi\sb{0}\bT\to\Seta} is \emph{realizable}
(in $\C$) if it is enrichable by a \emph{realizable} \Tma \w{\fMT\bY} \wwh
that is, \w{\Lambda\cong\Lambda\sb{\fMT\bY}} for some \w{\bY\in\C} (again,
not necessarily unique). In this case we say that $Y$ \emph{realizes} $\Lambda$.
Any \Tal of the form \w[,]{\pi\sb{0}\fX} where $\fX$ is a free \Tma[,] will
be called a \emph{free \Tal[.]}
\end{defn}

\begin{remark}\label{rfralg}
In principle, any coproduct of free \Tma[s] or \Tal[s] is also free (in the
sense of being in the image of the left adjoint of an appropriate forgetful
functor). However, in order to avoid the question of realizability for
arbitrary free \Tma[s] (or \Tal[s),] we restrict attention to coproducts
of monogenic objects of cardinality \www[.]{<\lambda}
\end{remark}

We also have an algebraic version of Lemma \ref{lfreema}, which follows
from the usual Yoneda Lemma:

\begin{lemma}\label{lfreeta}
If $\Lambda$ is an \Tal and \w{\pi_{0}\fMT\bB} is a free \Tal
(for \w[),]{\bB\in\bT} there is a natural isomorphism
\w[.]{\Hom\sb{\TAlg}(\pi_{0}\fMT\bB,\,\Lambda)\cong\Lambda\lin{\bB}}
\end{lemma}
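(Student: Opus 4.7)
The plan is to identify \w{\pi_{0}\fMT\bB} with the representable functor \w{y\sb{\bB}:=\Hom\sb{\Theta}(\bB,-):\Theta\to\Seta} on the algebraic sketch \w[,]{\Theta:=\pi\sb{0}\bT} and then invoke the classical Yoneda Lemma.

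First I would unpack the definitions. By Definition \ref{dmapa} we have \w[,]{\fMT\bB\lin{\bC}=\map\sb{\C}(\bB,\bC)=\map\sb{\bT}(\bB,\bC)} since $\bT$ is a full sub-simplicial category of $\C$; applying \w{\pi\sb{0}} and using the definition of $\Theta$ gives
\[
(\pi\sb{0}\fMT\bB)(\bC)~=~\pi\sb{0}\map\sb{\bT}(\bB,\bC)~=~\Hom\sb{\Theta}(\bB,\bC)~=~y\sb{\bB}(\bC),
\]
naturally in \w[.]{\bC\in\Theta} Next I would verify that this representable is indeed a \Tal[,] i.e., preserves the distinguished products: if \w[,]{\bC=\prod\sb{i<\lambda}\bC\sb{i}\in\PP} then since the simplicial enrichment in $\bT$ is inherited from $\C$, we have \w[,]{\map\sb{\bT}(\bB,\prod\sb{i}\bC\sb{i})\cong\prod\sb{i}\map\sb{\bT}(\bB,\bC\sb{i})} and $\pi_{0}$ preserves products of Kan complexes, so \w{y\sb{\bB}} is product-preserving, consistent with the ``free \Tal'' hypothesis.

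Then I apply the ordinary (enriched in $\Set$) Yoneda Lemma, which gives a bijection
\[
\Hom\sb{[\Theta,\Seta]}(y\sb{\bB},\Lambda)~\xra{\cong}~\Lambda\lin{\bB},\qquad \eta\mapsto\eta\sb{\bB}(\Id\sb{\bB}),
\]
natural in both \w{\bB\in\Theta} and \w[.]{\Lambda\in[\Theta,\Seta]} Since \w{\TAlg} is by definition the full subcategory of \w{[\Theta,\Seta]} consisting of product-preserving functors, and both \w{y\sb{\bB}} and $\Lambda$ lie in \w[,]{\TAlg} we have \w[,]{\Hom\sb{\TAlg}(\pi\sb{0}\fMT\bB,\Lambda)=\Hom\sb{[\Theta,\Seta]}(y\sb{\bB},\Lambda)} yielding the required isomorphism.

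There is no real obstacle here \wh the only subtlety is the identification of \w{\pi\sb{0}\fMT\bB} with the representable \w{y\sb{\bB}} and the product-preservation check, both of which are routine consequences of the definitions of $\bT$ and $\Theta$. The naturality statement, as usual, comes for free from the Yoneda Lemma.
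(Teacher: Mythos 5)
Your proposal is correct and matches the paper's argument, which simply observes that the lemma "follows from the usual Yoneda Lemma"; you have merely spelled out the identification of \w{\pi\sb{0}\fMT\bB} with the representable functor on \w{\Theta=\pi\sb{0}\bT} and the routine product-preservation check that the paper leaves implicit.
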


\begin{example}\label{egrtal}
For any ring $R$, with \w{\TsR} as in \S \ref{egeth}, we obtain an
$\fG$-sketch \w[,]{\TR:=\pi\sb{0}\TsR} namely, the full subcategory of
\w{\ho\Sa} whose objects are finite type $R$-GEMs (which are abelian group
objects in \w[).]{\ho\cS} Note that the cohomology functor \w{\HuR{-}} in
fact lands in \w[,]{\TRA} and realizable \TRal[s] are those which correspond
to actual spaces.
\end{example}

\begin{mysubsection}[\label{stma}]{Simplicial \Tma[s]}
Unfortunately, there seems to be no useful model category structure on the
category \w{\MT} of \Tma[s.] However, we do have a model category  structure
on the functor category \w[,]{\Sa\sp{\bT}} in which weak equivalences and
fibrations  are defined objectwise (cf.\ \cite[\S 7]{DKanS},
and compare \cite[\S 8]{BBlaC}), and any free \Tma is in fact a homotopy
cogroup object for \w[.]{\Sa\sp{\TsA}}
Thus we obtain a resolution model category structure (cf.\ \cite{BousC,JardB})
on the category \w{s\Sa\sp{\bT}} of \emph{simplicial} simplicially-enriched
functors \w[,]{\bT\to\Sa} in which a map \w{f:\fVd\to\fWd} is a
\emph{weak equivalence} if for each \w[,]{\bB\in\bT} the map
\w{\pi\sb{0}\fVd\lin{\bB}\to\pi\sb{0}\fWd\lin{\bB}} is a weak equivalence of
simplicial groups (see \cite[\S 2.2]{BBlaH}).
\end{mysubsection}

%
%
\sect{Mapping algebras and cosimplicial resolutions}
\label{cmacr}

We now explain how the \Ama structure on \w{X=\mapa(\bY,\bA)} described in
\S \ref{damastruc} suffices to construct a cosimplicial resolution of
$\bY$ from $X$, generalizing the results of
\cite[\S 3]{BBlaH} for \w{\bT=\TsR} when $R$ is a field.

Let $\F$ be a set of $\Omega$-spectra. By construction we have a
functor \w{\pi\sb{0}:\MF\to\FAlg} associating to any
\Fma $\fX$ its \Fal \w{\pi\sb{0}\fX} (cf. \S \ref{saos}). Since \Fma[s]
are rather complicated objects, it is natural to ask whether this functor
factors through some simpler category.  Evidently, for any fibrant simplicial
set $K$, \w{\pi\sb{0}K} depends only on the $0$-simplices and their homotopies,
i.e., on the $1$-truncation \w{\tau\sb{1}K} of $K$. However, we need even
less information if $K$ is a group object in \w[:]{\ho\Sa}

\begin{defn}\label{dhgroup}
In a model category $\C$ as in \S \ref{snac}, an
\emph{$H$-group} is a fibrant (and cofibrant) homotopy group object \wh
that is, an object \w{\bX\in\C} equipped with structure maps
\w{\mu:\bX\times\bX\to\bX} and \w[,]{(-)\sp{-1}\bX\to\bX} (with
\w{\ast\hra\bX} as the identity element), as well as chosen homotopies for
each of the identities in $\fG$ (cf.\ \S \ref{eggalg}) such as:
\w{H:\mu\circ(\mu\times\Id)\sim\mu\circ(\Id\times\mu)} for the associativity,
\w{G:\mu\circ((-)\sp{-1}\times\Id)\circ\diag\sim c\sb{\ast}} for the left
inverse, and so on.

For any cofibrant \w[,]{\bY\in\C} the Kan complex \w{K:=\map\sb{\C}(\bY,\bX)}
inherits an $H$-group structure in \w[.]{\Sa} We define an equivalence relation
on its $0$-simplices by setting:
\begin{myeq}\label{eqpizero}
f\sim g ~\Longleftrightarrow~\exists \alpha\in K\sb{1}~~\text{such that}~~
d\sb{0}\alpha=\mu\sb{\ast}(f,g\sp{-1})~~\text{and}~~d\sb{1}\alpha=\ast~.
\end{myeq}
\noindent for \w[.]{f,g,\in K\sb{0}} Note that \w{\alpha\in(PK)\sb{0}}
(see \S \ref{dpathloop}).
\end{defn}

\begin{lemma}\label{lhgroup}
For any $H$-group \w{\bX\in\C} as above,
$$
\pi\sb{0}\map\sb{\C}(\bY,\bX)~\cong~(\map\sb{\C}(\bY,\bX))\sb{0}/\sim~.
$$
\end{lemma}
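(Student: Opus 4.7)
The plan is to identify the relation $\sim$ defined by \eqref{eqpizero} with the standard equivalence relation $\sim'$ on $K_{0}$ used to compute $\pi_{0} K$ for a Kan complex, namely $f\sim' g$ iff there exists $\beta\in K_{1}$ with $d\sb{0}\beta=f$ and $d\sb{1}\beta=g$. Since $K:=\map\sb{\C}(\bY,\bX)$ is a Kan complex (being a mapping space into a fibrant object in a simplicial model category), $\sim'$ is already an equivalence relation and $\pi\sb{0} K=K\sb{0}/\!\sim'$ by definition, so the task reduces to showing that $\sim$ and $\sim'$ agree.

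The first step is to observe that the $H$-group structure on $\bX$ transports via $\map\sb{\C}(\bY,-)$ to an $H$-group structure on $K$, and hence endows $\pi\sb{0}K$ with a group structure: multiplication is induced by $\mu\sb{\ast}$, inversion by $((-)\sp{-1})\sb{\ast}$, and the identity is the class of the basepoint $\ast$. The group axioms on $\pi\sb{0}K$ are witnessed by the chosen homotopies $H$, $G$, etc.\ from Definition \ref{dhgroup}, which, after applying $\map\sb{\C}(\bY,-)$, yield $1$-simplices in $K$ implementing associativity, unitality, and the inverse law modulo $\sim'$.

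With the group structure in hand, the equivalence of the two relations is immediate. If $f\sim g$ via $\alpha\in K\sb{1}$ with $d\sb{0}\alpha=\mu\sb{\ast}(f,g\sp{-1})$ and $d\sb{1}\alpha=\ast$, then $\alpha$ directly exhibits $\mu\sb{\ast}(f,g\sp{-1})\sim'\ast$, so $[f]\cdot[g]\sp{-1}=[\ast]$ in $\pi\sb{0}K$, whence $[f]=[g]$ and thus $f\sim' g$. Conversely, if $f\sim' g$, then $[\mu\sb{\ast}(f,g\sp{-1})]=[\ast]$ in the group $\pi\sb{0}K$, so some $1$-simplex joins $\ast$ to $\mu\sb{\ast}(f,g\sp{-1})$; using the Kan extension property (to flip orientations if needed) we obtain an $\alpha\in K\sb{1}$ with $d\sb{0}\alpha=\mu\sb{\ast}(f,g\sp{-1})$ and $d\sb{1}\alpha=\ast$, i.e.\ $f\sim g$.

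The main, though routine, obstacle is the verification that $\pi\sb{0}K$ really is a group under the operations induced by the $H$-structure, which requires unpacking the chosen homotopies $H$, $G$, \ldots\ into concrete $1$-simplices in $K$ witnessing the identities up to $\sim'$. Once this is established, the equivalence of $\sim$ and $\sim'$ is just the familiar fact that in a group $ab\sp{-1}=e$ iff $a=b$.
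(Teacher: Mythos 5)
Your proof is correct. The paper states this lemma without proof, and your argument \wh transporting the $H$-group structure along $\map\sb{\C}(\bY,-)$ to make $\pi\sb{0}K$ a group, then observing that $f\sim g$ holds precisely when $[f]\cdot[g]\sp{-1}=[\ast]$, i.e.\ when $[f]=[g]$ \wh is exactly the standard argument the authors are implicitly invoking (note that $(PK)\sb{0}=\{\alpha\in K\sb{1}:d\sb{1}\alpha=\ast\}$, so your identification of the two relations, including the use of the Kan condition to reverse a $1$-simplex in the converse direction, is the right bookkeeping).
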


\begin{defn}\label{ddiscama}
If \w{\F=\{\eA\sp{i}\}\sb{i\in I}} is some set of $\Omega$-spectra in a
pointed model category $\C$,
and \w{\hF:=\{\uA{n}\sp{i}\}\sb{\eA\sp{i}\in\F,n\in\ZZ}} as above,
a \emph{\dFma} is a function \w[,]{\fY:\hF\to\Seta\sp{J}} written
\w{\uA{n}\mapsto (P\fY\lin{\uA{n}\sp{i}}\xra{p\sb{n}}\fY\lin{\uA{n}\sp{i}})}
\wb[,]{\eA\in\F,n\in\ZZ} where $J$ denotes the single-arrow category \w[.]{0\to 1}
The category of \dFma[s] will be denoted by \w[.]{\MFrd}
\end{defn}

\begin{defn}\label{ddiscam}
Let \w{\rho:\Sk\to\Seta\sp{J}} be the functor assigning to a Kan
complex $K$ the $0$-simplices \w{p_{0}:(PK)\sb{0}\to K\sb{0}} of its path
fibration \w{p:PK\to K} (cf.\ \S \ref{dpathloop}).

If $\fX$ is an \Fma for $\F$ as above, the associated \dFma \w{\rho\fX} is
defined by setting
$$
\rho\fX\lin{\uA{n}}~:=~~p_{0}:\fX\lin{P\uA{n}}\sb{0}~\to~\fX\lin{\uA{n}}\sb{0}~.
$$
\noindent This defines a functor \w[,]{\rho:\MF\to\MFrd} since $\fX$ takes
values in Kan complexes. Moreover, we may define a covariant functor
\w{\LA:\C\to\Mop} by \w{(\LA\bY)\lin{\uA{n}}:=\rho\fMF\bY\lin{\uA{n}}}
for each \w[.]{\eA\in\F}
\end{defn}

\begin{remark}\label{ropposite}
Note that \w{\MFrd} is just the diagram category \w[,]{\SGa} indexed by
a linear category $\Gamma$ consisting of a single non-identity arrow
\w{q\sb{n}:P\uA{n}\to\uA{n}} for each \w{n\in\ZZ} and \w[,]{\eA\in\F}
and thus no non-trivial compositions.

Therefore, we have a pullback diagram:
\mydiagram[\label{eqmapsetgamma}]{
\ar @{} [dr] |<<<<<{\framebox{\scriptsize{PB}}}
\Hom\sb{\SGa}(\fX,\fY) \ar[r] \ar[d] &
\prod\sb{\eA\in\F}\prod\sb{n\in\ZZ}
\Hom\sb{\Seta}(\fX\lin{P\uA{n}},\fY\lin{P\uA{n}})
\ar[d]\sp{\fY\lin{q\sb{n}}\sb{\ast}}\\
\prod\sb{\eA\in\F} \prod\sb{n\in\ZZ}
\Hom\sb{\Seta}(\fX\lin{\uA{n}},\fY\lin{\uA{n}})
\ar[r]\sp{\fX\lin{q\sb{n}}\sp{\ast}} &
\prod\sb{\eA\in\F}\prod\sb{n\in\ZZ}
\Hom\sb{\Seta}(\fX\lin{P\uA{n}},\fY\lin{\uA{n}})
}
\noindent for any \w[.]{\fX,\fY\in\SGa}  Thus \w{\Hom\sb{\SGa}(\fX,\fY)}
is a product over \w{n\in\ZZ} and \w{\eA\in\F} (of pullback squares).
\end{remark}

We deduce from Lemma \ref{lhgroup} and the fact that each \w{\uA{n}} in $\eA$
is an (infinite) loop space:

\begin{lemma}\label{lpizerohgp}
For any \Fma $\fX$, the \Fal \w{\pi\sb{0}\fX} is determined by \w{\rho\fX}
(together with the maps
\w[).]{\mu\sb{\ast}:\fX\lin{\uA{n}\times\uA{n}}\sb{0}\to\rho\fX\lin{\uA{n}}\sb{0}}
\end{lemma}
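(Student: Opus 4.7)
The plan is to apply Lemma~\ref{lhgroup} pointwise at each grading. For any \w[,]{\uA{n}\in\hF} the object \w{\uA{n}} is the $n$-th space of an $\Omega$-spectrum and hence a homotopy-commutative $H$-group in $\C$, equipped with a chosen multiplication $\mu$ and inverse. Since an \Fma is a pointed simplicial functor preserving products and the right-hand pullback squares of \wref{eqpathloop} (conditions~(a) and~(b) of Definition~\ref{dma}), the Kan complex \w{K:=\fX\lin{\uA{n}}} inherits a compatible $H$-group structure in \w[,]{\Sa} and there is a canonical identification \w[.]{PK=\fX\lin{P\uA{n}}} The resulting multiplication on \w{K\sb{0}} is precisely the map \w{\mu\sb{\ast}} hypothesised in the statement.

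By Lemma~\ref{lhgroup}, \w{\pi\sb{0}K} is the quotient of \w{K\sb{0}} by the relation \w{f\sim g} iff there exists \w{\alpha\in K\sb{1}} with \w{d\sb{1}\alpha=\ast} and \w[.]{d\sb{0}\alpha=\mu\sb{\ast}(f,g\sp{-1})} A $1$-simplex \w{\alpha\in K\sb{1}} with \w{d\sb{1}\alpha=\ast} is, by definition of the path object, the same data as a $0$-simplex of \w[,]{PK=\fX\lin{P\uA{n}}} and under this identification \w{d\sb{0}\alpha} coincides with the image under the endpoint map \w[,]{p\sb{0}:\fX\lin{P\uA{n}}\sb{0}\to\fX\lin{\uA{n}}\sb{0}} which is exactly the datum recorded in \w{\rho\fX\lin{\uA{n}}} (see Definitions~\ref{ddiscama}--\ref{ddiscam}). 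The inverse \w{g\sp{-1}} comes from the fixed $H$-group structure on \w{\uA{n}} and, modulo homotopy, is algebraically determined by \w[,]{\mu\sb{\ast}} so the equivalence relation, and hence the pointed set \w[,]{\pi\sb{0}\fX\lin{\uA{n}}} depends only on \w{\rho\fX} and \w[.]{\mu\sb{\ast}}

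Since every \Fal is product-preserving (Definition~\ref{dgalg}) and every object of \w{\TsF} is a product of objects of \w{\hF} of cardinality \www[,]{<\lambda} the values of \w{\pi\sb{0}\fX} on all of \w{\ThF=\pi\sb{0}\TsF} are recovered from its values at \w[;]{\hF} the \Fal operations then follow by functoriality of \w{\pi\sb{0}} applied to the simplicial action of $\fX$ on mapping spaces, passing through the equivalence relations just described. The only mildly delicate point is expressing the inverse entirely in terms of \w[,]{\mu\sb{\ast}} which presents no real obstacle because on \w{\pi\sb{0}K} the inverse is uniquely determined by the multiplication.
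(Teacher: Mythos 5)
Your argument is correct and is exactly the deduction the paper intends: the paper gives no written proof beyond the sentence preceding the lemma, which cites Lemma \ref{lhgroup} and the fact that each \w{\uA{n}} is an (infinite) loop space, and your proposal simply spells this out, identifying the $1$-simplices $\alpha$ of \wref{eqpizero} with the $0$-simplices of \w{\fX\lin{P\uA{n}}=P\fX\lin{\uA{n}}} that \w{\rho\fX} records. The one point you rightly flag \wh that \wref{eqpizero} invokes \w{g\sp{-1}} at the level of $0$-simplices, which \w{\rho\fX} does not record, and whose determination ``on \w{\pi\sb{0}K}'' is a priori circular \wh is treated no more carefully in the paper, and is genuinely harmless: the relation can be restated without any inverse as the existence of some \w{h\in\fX\lin{\uA{n}}\sb{0}} with both \w{\mu\sb{\ast}(g,h)} and \w{\mu\sb{\ast}(f,h)} in the image of \w[,]{p\sb{0}} which uses only the data allowed by the lemma.
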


\begin{mysubsection}{The dual Stover construction}
\label{sdsc}
In \cite{StoV}, Stover described a certain comonad on topological
spaces which eventually was shown to produce simplicial resolutions in
the \ww{E\sp{2}}-model category of \cite{DKStE}.  We now give a more conceptual
description of the dual construction.

Since the functor \w{\LA:\Sa\to\Mop} of \S \ref{ddiscam} has sufficient
information to calculate the homotopy groups
\w{\pi\sb{0}(\fMA\bY)\lin{\uA{n}}\cong[\bY,\uA{n}]} \wwh that is, the
$\eA$-cohomology of $\bY$ for each \w{\eA\in\F} \wwh if we can construct a
right adjoint \w{\RA:\Mop\to\Sa} to \w[,]{\LA} taking value in $\G$-injectives
for some class \w[,]{\G\supseteq\F} we might be able to use it to produce
a cosimplicial weak $\G$-resolution of any space $\bY$, and thus
its $\G$-completion \w[.]{\LG\bY}

Since \w{\LA} is contravariant, we need a natural isomorphism
\begin{myeq}\label{eqadjunct}
\Hom\sb{\C}(\bY,\RA\fX)~\cong~\Hom\sb{\Mop}(\LA\bY,\fX)~\cong~
\Hom\sb{\SGa}(\fX,\LA\bY)
\end{myeq}
\noindent for any \w{\bY\in\C} and \dFma $\fX$, where \w[.]{\SGa=\MFrd}

By \wref[,]{eqmapsetgamma} the right hand side naturally splits as a product
over \w{\eA\in\F} and \w{n\in\ZZ} of pullback squares in \w[:]{\Seta}
\mydiagram[\label{eqsmallpb}]{
\ar @{} [drr] |<<<<<<<<<<<{\framebox{\scriptsize{PB}}}
M\sb{n} \ar[rr] \ar[d] && \Hom\sb{\Seta}(\fX\lin{P\uA{n}},\LA\bY\lin{P\uA{n}})
\ar[d]\sp{\fX\lin{q\sb{n}}\sb{\ast}}\\
\Hom\sb{\Seta}(\fX\lin{\uA{n}},\LA\bY\lin{\uA{n}})
\ar[rr]\sp{\fX\lin{q\sb{n}}\sp{\ast}} &&
\Hom\sb{\Seta}(\fX\lin{P\uA{n}},\LA\bY\lin{\uA{n}})~.
}
\noindent Since each pointed set \w{\fX\lin{P\uA{n}}} is a coproduct in
\w{\Seta} of its non-zero element singletons, we can re-write this as
\mydiagram[\label{eqsmllpb}]{
\ar @{} [drr] |<<<<<<<<<<<{\framebox{\scriptsize{PB}}}
M\sb{n} \ar[rr] \ar[d] && \prod\sb{\fX\lin{P\uA{n}}}~\Hom\sb{\C}(\bY,P\uA{n})
\ar[d]\sp{(p\sb{\uA{n}})\sb{\#}}\\
\prod\sb{\fX\lin{\uA{n}}}~\Hom\sb{\C}(\bY,\uA{n})
\ar[rr]\sp{\fX\lin{q\sb{n}}\sp{\ast}} &&
\prod\sb{\fX\lin{P\uA{n}}}~\Hom\sb{\C}(\bY,\uA{n})~.
}
\noindent using the convention that the factor indexed by the basepoint of
\w{\fX\lin{P\uA{n}}} or \w{\fX\lin{\uA{n}}} is identified to zero.

Therefore, the right-hand side of \wref{eqadjunct} splits naturally
as a product over \w{\eA\in\F} and \w{n\in\ZZ} of certain pointed sets,
each of which factors in turn as a product of two types of pointed sets:
namely, the products sets
\begin{myeq}\label{eqprodone}
\prod\sb{\fX\lin{\uA{n}}\setminus\Image\fX\lin{q\sb{n}}}~
\Hom\sb{\C}(\bY,\uA{n})~~\times~~
\prod\sb{\ast\neq\Phi\in\fX\lin{q\sb{n}}\sp{-1}(\ast)}~
\Hom\sb{\C}(\bY,\Omega\uA{n})
\end{myeq}
\noindent and pullback squares of the form:
\mydiagram[\label{eqsmallerpb}]{
M_{\phi}\ar[rr] \ar[d] &&
\prod\sb{\fX\lin{q\sb{n}}\sp{-1}(\phi)}~\Hom\sb{\C}(\bY,P\uA{n})
\ar[d]\sp{(p\sb{\uA{n}})\sb{\#}}\\
\Hom\sb{\C}(\bY,\uA{n})\ar[rr] &&
\prod\sb{\fX\lin{q\sb{n}}\sp{-1}(\phi)}~\Hom\sb{\C}(\bY,\uA{n})~,
}
\noindent for each
\w[.]{\ast\neq\phi\in\Image\fX\lin{q\sb{n}}\subseteq\fX\lin{\uA{n}}}

Thus we have a natural identification of the left-hand term
\w{\Hom\sb{\C}(\bY,\RA\fX)} in \wref{eqadjunct} with a limit of sets of the form
\w[.]{\Hom\sb{\C}(\bY,-)} Since \w[,]{\Hom\sb{\C}(\bY,-)} commutes with
limits, we set
\begin{myeq}\label{eqadjunction}
\RA\fX~:=~
\prod\sb{\eA\in\F}~\prod\sb{n\in\ZZ}~\prod\sb{\phi\in\fX\lin{\uA{n}}}~~~Q\sb{\phi}~,
\end{myeq}
\noindent where we define \w{Q\sb{\phi}} for \w{\phi\in\fX\lin{\uA{n}}}
as follows:

\begin{enumerate}
\renewcommand{\labelenumi}{(\alph{enumi})~}
\item If \w[,]{\ast\neq\phi\in\Image\fX\lin{q\sb{n}}} then
\w{Q\sb{\phi}} is defined by the pullback square
\mydiagram[\label{eqsmallestpb}]{
Q\sb{\phi} \ar[rr]^<<<<<<<<<<{(\xi)}\ar[d]_{\omega\sb{\phi}} &&
\prod\sb{\fX\lin{q\sb{n}}\sp{-1}(\phi)}~P\uA{n}
\ar[d]\sp<<<<{\prod p\sb{\uA{n}}}\\
\uA{n}\ar[rr]^<<<<<<<<<{\diag} && \prod\sb{\fX\lin{q\sb{n}}\sp{-1}(\phi)}~\uA{n}
}
\noindent in $\C$.
\item If \w[,]{\phi\not\in\Image\fX\lin{q\sb{n}}} we set \w[.]{Q\sb{\phi}:=\uA{n}}
\item If \w[,]{\phi=\ast} we set
$$
Q\sb{\phi}~:=~\prod\sb{\fX\lin{q\sb{n}}\sp{-1}(\ast)\setminus\{\ast\}}~
\Omega\uA{n}~.
$$
\end{enumerate}
\end{mysubsection}

\begin{defn}\label{dmonad}
We call \w{\RA\fY} of \wref{eqadjunction} the \emph{dual Stover construction}
for $\F$, applied to the \dFma $\fY$. This defines a functor
\w[,]{\RA:\Mop\to\C} right adjoint to \w[,]{\LA} and thus a monad
\w{\TF:=\RA\circ\LA} on $\C$, with unit
\w{\eta=\widehat{\Id\sb{\LA}}:\Id\to\TF}
and multiplication \w[,]{\mu=\RA\widetilde{\Id\sb{\TF}}:\TF\circ\TF\to\TF}
as well as a comonad \w{\SA:=\LA\circ\RA} on \w[,]{\Mop} with counit
\w{\vare=\widetilde{\Id\sb{\RA}}:\SA\to\Id} and comultiplication
\w{\delta=\LA\widehat{\Id\sb{\SA}}:\SA\to\SA\circ\SA}
(cf.\ \cite[VI, \S 1]{MacLC}).

Recall that a \emph{coalgebra} over \w{\SA} is an object \w{\fY\in\Mop}
equipped with a section \w{\zeta:\fY\to\SA\fY} for the counit
\w[,]{\vare\sb{\fY}:\SA\fY\to\fY} such that
\begin{myeq}\label{eqcoalg}
\SA\zeta\circ\zeta~=~\delta\sb{\fY}\circ\zeta
\end{myeq}
\noindent (see \cite[VI, \S 2]{MacLC}).
\end{defn}

\begin{example}\label{egcoalg}
Any \emph{realizable} \dFma \w{\LA\bY=\rho\fMF\bY} has a canonical structure of
a coalgebra over \w[,]{\SA} with \w{\zeta:\LA\bY\to\LA\RA\LA\bY}
(in \w[)]{\Mop} equal to \w[,]{\LA(\widehat{\Id\sb{\LA\bY}})} where
\w{\widehat{\Id\sb{\LA\bY}}:\bY\to\RA\LA\bY} is the adjoint of
\w[.]{\Id:\LA\bY\to\LA\bY}
\end{example}

\begin{mysubsection}{The Stover category}
\label{ssosc}
If $\F$ is a set of $\Omega$-spectra in $\C$, and $\kappa$
is some (infinite) cardinal, we define an \emph{$\F$-Stover object}
(for $\kappa$) to be a product of at most $\kappa$ objects which are either
spaces \w{\uA{n}} from an $\Omega$-spectrum \w[,]{\eA\in\F} or are pullbacks
$Q$ of the form:
\mydiagram[\label{eqstoverpb}]{
Q \ar[rr]\ar[d] && \prod\sb{T}~P\uA{n}\ar[d]\sp{p}\\
\uA{n}\ar[rr]\sp{\diag} && \prod\sb{T}~\uA{n}
}
\noindent for various \w[,]{\eA\in\F} where $T$ is some set of
cardinality $\leq\kappa$.

Thus if $\kappa$ bounds the cardinality of all sets \w{\fX\lin{\uA{n}}}
\wb[,]{\eA\in\F, n\in\ZZ} for some \dFma $\fX$, then \w{\RA\fX} is an
$\F$-Stover object for $\kappa$. Moreover, since each pullback $Q$ in
\wref{eqstoverpb} is weakly equivalent to a product of copies of
\w[,]{\Omega\uA{n}} we see that any $\F$-Stover object is $\G$-injective for
any class \w[.]{\G\supseteq\F}

We denote by \w{\TFs{\kappa}} the full simplicial subcategory of $\C$
consisting of all $\F$-Stover objects for $\kappa$. We shall always assume
that \w[,]{\TsF\subseteq\TFs{\kappa}} which holds as long as
\w{\kappa\geq\lambda} (see  Remark \ref{rlambdaone}).
Note that \w{\TFs{\kappa}}  is itself an enriched sketch, whose \ma[s] will be called
\emph{\Sma[s]} (for $\kappa$). The category of \Sma[s] will be denoted by
\w[,]{\MAS} and the corresponding free \Sma functor will be written
\w[.]{\fMS:\C\to\MAS}

We thus have a forgetful functor
\w[,]{U\sp{\Stov}:\MFS\to\MF} and when there is no danger of confusion we
shall denote the composite \w{\rho\circ U\sp{\Stov}} simply by $\rho$,
so we have \w[.]{\rho\fMF=\rho\fMS}
\end{mysubsection}

\begin{remark}\label{ropcat}
It is difficult to keep track of the (co)monads obtained from adjoint
functors when they are not covariant, which is why Definition \ref{dmonad}
was given in terms of \w[.]{\Mop} However, for the purposes of the following
Lemma and Proposition, we prefer to work in \w{\MFrd} itself, so that we
will refer to \w{\vare\sb{\fY}\op:\fY\to\SA\fY} rather than the original
\w[,]{\vare\sb{\fY}:\SA\fY\to\fY} and so on.
\end{remark}

\begin{lemma}\label{lcoalg}
The coalgebra structure map \w{\zeta:\LA\bY\to\LA\RA\LA\bY}
(in \w[),]{\Mop} of a realizable \dFma \w{\LA\bY} is induced
by a map of \Sma[s] \w{\zeta':\fMS(\RA(\rho\fMS\bY))\to\fMS\bY}
(in \w[),]{\MFS} so \w[.]{\zeta\op=\rho\zeta'}
\end{lemma}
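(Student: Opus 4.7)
The plan is to obtain $\zeta'$ by applying the contravariant free Stover mapping algebra functor $\fMS:\C\to\MFS$ directly to the adjunction unit $\widehat{\Id\sb{\LA\bY}}:\bY\to\RA\LA\bY$, and then to verify, by unwinding definitions, that $\zeta\op=\rho\zeta'$. The content of the lemma is that the coalgebra structure map, originally defined at the discrete level in $\MFrd$, lifts canonically to a morphism between strict Stover mapping algebras.

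First I would check that $\RA\LA\bY$ is an $\F$-Stover object in the sense of \S\ref{ssosc}. This is immediate from the explicit product description \wref{eqadjunction} of $\RA$ applied to any \dFma[:] the factors $Q\sb{\phi}$ are either spaces $\uA{n}$, loop spaces $\Omega\uA{n}$, or pullbacks of the form \wref[,]{eqstoverpb} and the total product has cardinality at most $\kappa$ provided $\kappa$ bounds the sets $\LA\bY\lin{\uA{n}}$. Hence $\RA\LA\bY\in\TFs{\kappa}$, so $\fMS(\RA\LA\bY)$ is a (free) \Sma[.] I would then set
$$
\zeta'~:=~\fMS(\widehat{\Id\sb{\LA\bY}})~:~\fMS(\RA\LA\bY)~\to~\fMS\bY,
$$
a morphism in $\MFS$ by the contravariant functoriality of $\fMS$.

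To verify $\zeta\op=\rho\zeta'$, I would invoke the convention from the end of \S\ref{ssosc} that $\rho$ also denotes the composite $\rho\circ U\sp{\Stov}$; combined with the evident identity $U\sp{\Stov}\circ\fMS=\fMF$ on realizable mapping algebras, this gives $\rho\circ\fMS=\rho\circ\fMF=\LA$ as functors $\C\to\MFrd$, the last equality being the very definition of $\LA$ in \S\ref{ddiscam}. Applying this functor identity to the morphism $\widehat{\Id\sb{\LA\bY}}$ yields $\rho\zeta'=\LA(\widehat{\Id\sb{\LA\bY}})$, which by Example \ref{egcoalg} is precisely $\zeta\op$.

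I do not anticipate a substantive obstacle: the entire argument is a definition chase. The only genuine point is that the adjunction unit $\widehat{\Id\sb{\LA\bY}}$ already lives in $\C$ as an honest morphism $\bY\to\RA\LA\bY$, so the contravariant functor $\fMS$, which is defined on all of $\C$, automatically produces the desired lift $\zeta'$ to the category of strict Stover mapping algebras.
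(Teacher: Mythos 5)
Your proposal is correct and essentially matches the paper's argument: the paper constructs $\zeta'$ via the enriched Yoneda Lemma (Lemma \ref{lfreema}) from the ``tautological element'' $\lra{\zeta'}=\widehat{\Id\sb{\LA\bY}}\in\Hom\sb{\C}(\bY,\RA(\rho\fMS\bY))$, which under that correspondence is exactly your $\fMS(\widehat{\Id\sb{\LA\bY}})$. Your explicit check that $\rho\circ\fMS=\LA$ forces $\zeta\op=\rho\zeta'$ is a clean way of making precise what the paper's final paragraph treats informally.
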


\begin{proof}
Since \w{\RA(\rho\fMS\bY)} is an $\F$-Stover object, we see that
\w{\fMS(\RA(\rho\fMS\bY))} is a free \Sma[,] so in order to define the map
\w[,]{\zeta'} by Lemma \ref{lfreema} it suffices to produce a
``tautological element'' \w{\lra{\zeta'}} in
\begin{equation*}
\begin{split}
\fMS\bY\lin{\RA(\rho\fMS\bY)}\sb{0}~=&~
\map\sb{\C}(\bY,\,\RA(\rho\fMS\bY))\sb{0}~=~
\Hom\sb{\C}(\bY,\,\RA(\rho\fMS\bY))\\
~\cong&~\Hom\sb{\Mop}(\LA\bY,\,\LA\bY)~,
\end{split}
\end{equation*}
\noindent where the last isomorphism is just \wref[.]{eqadjunct}
We may thus choose \w{\lra{\zeta'}} to be the adjoint of
\w[.]{\Id\sb{\LA\bY}}

More explicitly, since \w{\RA(\rho\fMS\bY)} is defined by a limit,
\w{\lra{\zeta'}} will be determined by choosing compatible elements in
each component of \wref[.]{eqadjunction} However, each component \w{\uA{n}}
is indexed by an element $\phi$ in \w[,]{\fMS\bY\lin{\uA{n}}\sb{0}}
and each component \w{P\uA{n}} is indexed by an element $\Phi$ in
\w[,]{\fMS\bY\lin{P\uA{n}}\sb{0}} and all these choice are compatible,
yielding the required \w[.]{\lra{\zeta'}}
\end{proof}

\begin{prop}\label{pcoalgebra}
For every \Sma $\fX$, the corresponding \dFma \w{\fY:=\rho\fX} has a
natural structure of a coalgebra over \w[.]{\SA}
\end{prop}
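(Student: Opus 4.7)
The plan is to generalize Lemma \ref{lcoalg}: in the realizable case the coalgebra structure $\zeta$ factors through a map of Stover mapping algebras $\zeta': \fMS(\RA(\rho\fMS\bY)) \to \fMS\bY$ obtained by adjunction, and I will directly construct an analogous map $\zeta': \fMS(\RA(\rho\fX)) \to \fX$ in $\MFS$ for an arbitrary Stover mapping algebra $\fX$, then set $\zeta^{\op} := \rho\zeta'$.

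Since $\RA(\rho\fX)$ is an $\F$-Stover object (by construction in \S\ref{sdsc}), the source $\fMS(\RA(\rho\fX))$ is a free Stover mapping algebra, so by Lemma \ref{lfreema} specifying $\zeta'$ is equivalent to choosing a ``tautological'' element in $\fX\lin{\RA(\rho\fX)}\sb{0}$. By product preservation (Definition \ref{dma}(a)),
\[
\fX\lin{\RA(\rho\fX)}\sb{0}~\cong~\prod\sb{\eA\in\F,\,n\in\ZZ,\,\phi\in\fX\lin{\uA{n}}\sb{0}}~\fX\lin{Q\sb{\phi}}\sb{0}.
\]
In each of the three cases of \S\ref{sdsc} the natural choice of component is available: for case (b) with $Q\sb{\phi} = \uA{n}$, take $\phi$ itself; for case (c) with $Q\sb{\phi} = \prod\sb{\Phi}\Omega\uA{n}$, use loop preservation (Definition \ref{dma}(b)) to identify each factor with the path $\Phi \in \fX\lin{q\sb{n}}\sp{-1}(\ast)$; for case (a), where $Q\sb{\phi}$ is the pullback \wref{eqsmallestpb}, use that $\fX$ preserves this pullback (which is built from products and the loop/path pullbacks of \wref{eqpathloop}) to assemble the pair $(\phi,\,(\Phi)\sb{\Phi\in\fX\lin{q\sb{n}}\sp{-1}(\phi)})$. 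Compatibility of this data in case (a) is immediate from the definition $\rho\fX\lin{\uA{n}} = (\fX\lin{q\sb{n}}: \fX\lin{P\uA{n}}\sb{0} \to \fX\lin{\uA{n}}\sb{0})$.

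Finally, we verify the coalgebra axioms and naturality. Naturality in $\fX$ is inherited from that of Lemma \ref{lfreema}: a morphism $\fX \to \fX'$ in $\MFS$ transports the tautological element functorially. The section identity $\vare\sb{\rho\fX} \circ \zeta = \Id$ holds because the counit $\vare$ corresponds, under the adjunction, to projection of $\RA(\rho\fX)$ onto its canonically-indexed components, which by design returns the original data $\phi$ (respectively $\Phi$). For the coassociativity identity $\SA\zeta \circ \zeta = \delta\sb{\rho\fX} \circ \zeta$, both sides correspond via Lemma \ref{lfreema} to a single element of $\fX\lin{\RA\LA\RA(\rho\fX)}\sb{0}$, and the same ``doubly-tautological'' choice of components works for both. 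The main obstacle is precisely this last verification: tracing through the doubly-indexed product decomposition of $\RA\LA\RA(\rho\fX)$ and matching components on the two sides is patient but formal bookkeeping, made rigid by the uniqueness clause of Lemma \ref{lfreema}.
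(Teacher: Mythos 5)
Your proof is correct and follows essentially the same route as the paper's: since $\RA(\rho\fX)$ is an $\F$-Stover object, $\fMS(\RA(\rho\fX))$ is a free Stover mapping algebra, so by Lemma \ref{lfreema} the structure map is determined by a tautological element of $\fX\lin{\RA(\rho\fX)}\sb{0}$ assembled componentwise from the data of $\fX$, and the counit and coassociativity identities reduce to a formal comparison of tautological elements. If anything, you are more explicit than the paper about producing that element for a non-realizable $\fX$ via the product-, loop-, and pullback-preservation clauses of Definition \ref{dma} (the paper's proof concentrates on $\delta'$ and refers the remaining verification to the literature), and this is precisely the point where the Stover-mapping-algebra hypothesis on $\fX$ is genuinely used.
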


Compare \cite[Proposition 3.7]{BBDoraR} and \cite[Proposition 3.16]{BBlaH}.

\begin{proof}
Using Remark \ref{ropcat}, we must produce a section \w{\zeta\op:\SA\fY\to\fY}
for \w{\vare\sb{\fY}\op:\fY\to\SA\fY} satisfying \wref[.]{eqcoalg}
In fact, we will construct a map of \Sma[s]
\w{\zeta':\fMS\RA\fX\to\fX} fitting into a commuting square
\mydiagram[\label{eqcoalgebra}]{
\fMS(\RA(\rho\fMS\RA\fY)) \ar[d]\sb{\delta'} \ar[rrr]^<<<<<<<<<{\fMS\zeta'}
&&& \fMS\RA\fY \ar[d]\sb{\zeta'}\\
\fMS\RA\fY \ar[rrr]\sp{\zeta'} &&& \rho\fY~.
}
\noindent in \w[,]{\MA} with \w{\zeta=(\rho\zeta')\op} and
\w[.]{\delta\sb{\fY}=\rho\delta'}

As in the proof of Lemma \ref{lcoalg}, in order to define the map
$$
\delta':\fMS(\RA(\rho\fMS\RA\fY))~\to~\fMS\RA\fY~,
$$
by Lemma \ref{lfreema} it suffices to produce a ``tautological element''
\w{\lra{\delta'}} in
$$
\fMS\RA\fY\lin{\RA(\rho\fMS(\RA\fY))}\sb{0} ~=~
\map\sb{\C}(\RA\fY,\,\RA(\rho\fMS(\RA\fY)))\sb{0}~,
$$
\noindent that is, a map \w[,]{\theta:\RA\fY\to\RA((\rho\fMS(\RA\fY))}
which we choose to be the adjoint of
\w[.]{\Id:\rho\fMS(\RA\fY)\to\rho\fMS(\RA\fY)}

The verification that \wref{eqcoalgebra} commutes is as in
\cite[Proposition 3.16]{BBlaH}.
\end{proof}

\begin{notation}\label{nkappa}
Given a set $\F$ of $\Omega$-spectra and an \Fma $\fX$, we write
$$
\kappa\sb{\fX}~:=~\sup~\{|\fX\lin{\uA{n}}|~:~\eA\in\F, n\in\ZZ\}~.
$$
\end{notation}

\begin{prop}\label{presma}
Assume given a set $\F$ of $\Omega$-spectra in a model
category $\C$ (as in \S \ref{snac}), and an \Fma $\fX$ which extends to an
\Sma for some cardinal \w[.]{\kappa\geq \kappa\sb{\fX}} Then there is a
cosimplicial object \w[,]{\Wu\in c\C} with each \w{\bW\sp{n}} an $\F$-Stover
object for $\kappa$, equipped with a weak equivalence of simplicial
\Fma[s] \w[.]{\fMF\Wu\to\cd{\fX}}
\end{prop}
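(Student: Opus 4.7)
The plan is to construct \w{\Wu} as the cobar resolution of the monad \w{\TF=\RA\LA} on \w[,]{\C} starting from \w{\RA\fY} for \w[,]{\fY:=\rho\fX} with the coalgebra structure on \w{\fY} from Proposition \ref{pcoalgebra} providing the connection back to \w[.]{\fX}

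First, set
$$
\bW\sp{n}~:=~\TF\sp{n+1}\RA\fY~=~\RA\SA\sp{n+1}\fY \qquad (n\geq 0),
$$
with the standard cobar cofaces \w{d\sp{i}=\TF\sp{i}\eta\sb{\TF\sp{n-i}\RA\fY}} and codegeneracies \w[,]{s\sp{j}=\TF\sp{j}\mu\sb{\TF\sp{n-1-j}\RA\fY}} derived from the monad structure on \w[.]{\TF} Each \w{\bW\sp{n}} lies in the image of \w[,]{\RA} so is an $\F$-Stover object in the sense of \S\ref{ssosc}. A short cardinality induction, starting from the hypothesis \w{\kappa\geq\kappa\sb{\fX}} and using the bound on the given Stover extension of \w[,]{\fX} shows that each \w{\bW\sp{n}} is in fact $\F$-Stover for \w[.]{\kappa}

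Next, build the augmentation \w{\fMF\Wu\to\cd{\fX}} in \w{s\MF} by composing the iterated \w{\fMF(\eta)}-maps (the cosimplicial augmentation levels, applied contravariantly) with the algebraic structure map \w{\zeta'\colon\fMS\RA\rho\fX\to\fX} produced in the proof of Proposition \ref{pcoalgebra}. Concretely, at level \w[,]{n} the augmentation is the composite
$$
\fMF\bW\sp{n}~\xra{\fMF(\eta)}~\cdots~\xra{\fMF(\eta)}~\fMF\bW\sp{0}~\xra{\fMF(\eta\sb{\RA\fY})}~\fMF\RA\fY~\xra{\zeta'}~\fX~.
$$
Cosimplicial compatibility reduces to the monad axioms for \w{\TF} and the naturality of \w[.]{\zeta'}

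The main obstacle is verifying that this augmentation is a weak equivalence in the sense of \S\ref{stma}: for each \w[,]{\bB\in\bT} the induced map of simplicial groups \w{\pi\sb{0}\fMF\Wu\lin{\bB}\to\cd{\pi\sb{0}\fX\lin{\bB}}} must be a weak equivalence. At \w[,]{\pi\sb{0}} the simplicial object \w{\fMF\bW\sp{\bullet}\lin{\bB}} is identified with the classical bar resolution of the \w{\SA}-algebra \w{(\fY,\zeta)\lin{\bB}} in \w[,]{\MFrd} augmented by the algebra structure \w[.]{\zeta} A standard extra-degeneracy argument, taking \w{s\sb{-1}=\vare} (the counit of the comonad \w{\SA} on \w[,]{\Mop} equivalently the unit of the monad \w{\SA} on \w[)]{\MFrd} on the outermost factor, then provides a simplicial contraction; the required simplicial identities for \w{s\sb{-1}} are consequences of the comonad axioms for \w{\SA} together with the coalgebra identity \wref[.]{eqcoalg}
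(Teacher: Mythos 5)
Your cosimplicial object is not the right one: as written, it resolves $\RA\fY$ rather than $\fX$. With $\bW^n:=\TF^{n+1}\RA\fY$ and \emph{all} cofaces given by unit insertions $\TF^i\eta$, you have built the standard cobar resolution of the object $\RA\fY\in\C$, and the coalgebra structure $\zeta$ enters only through the augmentation. Applying $\rho\fMF=\LA$ then gives $\LA\bW^n=\SA^{n+1}(\SA\fY)$, i.e.\ the canonical (contractible) bar resolution of the \emph{free} algebra $\SA\fY=\LA\RA\fY$ --- not the bar resolution of $(\fY,\zeta)$, contrary to the identification claimed in your last paragraph. Hence for each $\bB$ the simplicial group $\pi_0\fMF\Wu\lin{\bB}$ has degree-zero homotopy $\pi_0\fMF\RA\fY\lin{\bB}=[\RA\fY,\bB]$, and your augmentation factors through $\zeta'\colon\fMF\RA\fY\to\fX$, which on $\pi_0\lin{\bB}$ is only a split epimorphism, not an isomorphism (were it an isomorphism, the single object $\RA\fY$ would already realize $\fX$ and no resolution would be needed). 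So your $\fMF\Wu\to\cd{\fX}$ is not a weak equivalence. The coalgebra structure has to be built into the cosimplicial object itself: take $\bW^0:=\RA\fY$ and $\bW^n:=\TF^{n}\RA\fY=\RA\SA^{n}\fY$; the unit then supplies only $n+1$ of the required $n+2$ cofaces $\bW^{n}\to\bW^{n+1}$, and the missing top coface $d^{n+1}$ is the adjoint of $\Id_{\SA^{n+1}\fY}$, i.e.\ comes from $\zeta$. With this indexing $\rho\fMF\Wu$ really is the bar resolution of the algebra $(\fY,\zeta)$, which is contractible onto $\fY=\rho\fX$ precisely because $\fY$ carries that algebra structure.

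Even after this correction, the closing appeal to a ``standard extra-degeneracy argument'' hides a genuine difficulty. The extra degeneracy $s_{-1}$ (built from $\zeta$) is a map of \emph{discrete} mapping algebras only; it is not induced by any map of spaces or of full Stover mapping algebras, so it has no reason to commute with the $H$-group multiplication $\mu_{*}$ on $\fVt{k}\lin{\uA{n}}$ that Lemma \ref{lhgroup} uses to compute $\pi_0$. Consequently $s_{-1}$ need not even descend to a well-defined map $G_k\to G_{k+1}$ of the simplicial groups $G_k=\pi_0\fVt{k}\lin{\uA{n}}$, let alone a homomorphism, so the simplicial contraction does not formally pass to $\pi_0$. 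One has to argue by hand: represent a Moore cycle $\alpha\in C_kG_{\bullet}$ by an element $a$ together with path-space lifts $b_i$ of the faces $d_ia$, apply $s_{-1}$ to these representatives --- which does preserve the property of lifting to the path space, since $s_{-1}$ is a map in $\MFrd$ --- and check directly that $[s_{-1}a]$ is a Moore chain with $\partial[s_{-1}a]=\alpha$.
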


\begin{proof}
The fact that the \emph{discrete} $\F$-mapping algebra \w{\rho\fMF\Wu} is
homotopy equivalent to \w{\cd{\rho\fX}} follows from the usual properties of the
``standard construction'' for a (co)monad (cf.\ \cite[App.,\S 3]{GodeT} and
compare \cite[\S 8.6]{WeibHA}).
However, since we need the precise details of the contravariant case in our setting,
we first summarize these as follows:

We may define an augmented simplicial object \w{\fVdd\to\fY}
in \w{\Mop} by iterating the corresponding comonad \w{\SA:\Mop\to\Mop} on
\w[,]{\fY:=\rho\fX} so \w{\fVt{k}=\SA\sp{k+1}\fY} (cf.\ \cite[\S 8.6.4]{WeibHA})
By Proposition \ref{pcoalgebra}, $\fY$ is a coalgebra over \w[,]{\SA}
so it is \ww{\SA}-projective (cf.\ \cite[\S 8.6.6]{WeibHA}), and the structure
map \w{\zeta\sb{\fY}:\fY\to\tilde{\fV}\sb{0}=\SA\fY} provides an extra
degeneracy map \w{s\sb{k+1}:=\SA\sp{k+1}\zeta\sb{\fY}:\fVt{k}\to\fVt{k+1}}
(see \cite[Proposition 8.6.8]{WeibHA}).

An explicit description of \w{\fVdd\to\fY} in low dimensions is given by
the following diagram in \w[:]{\Mop}
\mydiagram[\label{eqsimpma}]{
\fVt{-1}:=\fY \ar@/^{1.0pc}/[rrr]\sp{s\sb{0}=\zeta\sb{\fY}} &&&
\fVt{0}:=\SA\fY \ar@/^{1.0pc}/[lll]\sp{d\sb{0}=\vare\sb{\fY}=
\widetilde{\Id\sb{\RA\fY}}}
\ar@/^{3.0pc}/[rrrr]\sp{s\sb{0}=\delta\sb{\fY}=\LA\widehat{\Id\sb{\SA\fY}}}
\ar@/^{1.0pc}/[rrrr]\sp{s\sb{1}=\SA\zeta\sb{\fY}}  &&&&
\fVt{1}:=\SA\sp{2}\fY \ar@/^{1.0pc}/[llll]\sb{d\sb{0}=\vare\sb{\SA\fY}}
\ar@/^{3.0pc}/[llll]\sb{d\sb{1}=\SA\vare\sb{\fY}}
}

Applying the functor \w{\RA} dimensionwise to \w{\fVdd\to\fY} yields an
augmented simplicial object \w[.]{\RA\fVdd\to\RA\fY} We set
$$
\bW\sp{n}~:=~\RA\tilde{\fV}\sb{n-1}~=~\RA\SA\sp{n}\fY\hsp \text{with}\hsm
\bW\sp{0}~:=~\RA\fY~.
$$
\noindent Moreover, we have an extra map
\w[,]{d\sp{n+1}:\bW\sp{n}=\RA\SA\sp{n}\fY\to\RA\SA\sp{n+1}\fY=\bW\sp{n+1}}
adjoint to \w[,]{\Id\sb{\SA\sp{n+1}\fY}} so we actually obtain a cosimplicial
object \w[.]{\Wu}

Once again, we have an explicit description of \w{\Wu} in low dimensions
as a diagram in $\C$:
\mydiagram[\label{eqcosimpsp}]{
\bW\sp{0}=\RA\fY \ar@/^{2.5pc}/[rr]\sp{d\sp{0}=\RA\zeta\sb{\fY}}
\ar[rr]\sp<<<<<<<<<{d\sp{1}=\widehat{\Id\sb{\SA\fY}}} &&
\bW\sp{1}=\RA\SA\fY \ar@/^{2.0pc}/[ll]\sb{s\sp{0}=\RA\vare\sb{\fY}}
\ar@/^{4.0pc}/[rrr]\sp{d\sp{0}=\RA\LA\widehat{\Id\sb{\SA\sp{2}\fY}}}
\ar@/^{2.0pc}/[rrr]\sp{d\sp{1}=\RA\SA\zeta\sb{\fY}}
\ar[rrr]\sp{d\sp{2}=\widehat{\Id\sb{\SA\SA\fY}}} &&&
\bW\sp{2}=\RA\SA\sp{2}\fY
\ar@/^{2.0pc}/[lll]\sb{s\sp{0}=\RA\vare\sb{\SA\fY}}
\ar@/^{4.0pc}/[lll]\sb{s\sp{1}=\RA\SA\vare\sb{\fY}}
}

As we saw in the proof of Proposition \ref{pcoalgebra}, the opposite of
\ww{\SA}-coalgebra structure map of \w{\fY=\rho\fX} lifts to a map of \Sma[s]
\w[,]{\zeta':\fMS\bW\sp{0}\to\fX} making \w{\fMA\Wu\to\fX} into an augmented
simplicial \Sma[,] which is described in low dimensions by the diagram
(in \w[):]{\MAS}

\mydiagram[\label{eqsimpsma}]{
\fX &&& \fMS\bW\sp{0} \ar[lll]\sb{\zeta'}
\ar@/^{2.0pc}/[rrr]\sp{s\sb{0}=\fMS\RA\vare\sb{\fY}} &&&
\fMS\bW\sp{1} \ar[lll]\sb<<<<<<<<<{d\sb{0}=\fMS\RA\zeta\sb{\fY}}
\ar@/^{2.0pc}/[lll]\sb{d\sb{1}=\fMS\widehat{\Id\sb{\SA\fY}}} & \dotsc
}
\noindent By construction, applying $\rho$ to this yields
\w{\fVdd\to\fY} (viewed as a coaugmented cosimplicial object \w[).]{\Mop}
Since each \w{\uA{n}} of $\eA$ is an (infinite) loop space, for each
\w[,]{n,k\geq 0} each of the pointed sets \w{\fVt{k}\lin{\uA{n}}} (in the
notation of \S \ref{ropposite}) is equipped with a binary operation
\w{\mu\sb{\#}:\fVt{k}\lin{\uA{n}}\times
\fVt{k}\lin{\uA{n}}\to\fVt{k}\lin{\uA{n}}}
(induced by the $H$-space multiplication \w[),]{\mu:\uA{n}\times\uA{n}\to\uA{n}}
and we can therefore use Lemma \ref{lhgroup} to calculate the homotopy groups
of the augmented simplicial (abelian) group
$$
\Gd~:=~\pi\sb{0}(\fMS\Wu\lin{\uA{n}})~\xra{\zeta'\lin{\uA{n}}\sb{\#}}~
\pi\sb{0}\fX\lin{\uA{n}}~=:~G\sb{-1}
$$
\noindent from the object \w{\fVdd\lin{P\uA{n}\to\uA{n}}} in \w[.]{s\Seta\sp{J}}

Note that the indexing of \wref{eqsimpsma} does not match with that of
\wref[,]{eqsimpma} even after taking into account the fact that these diagrams
are in opposite categories.  It will be convenient to choose our indexing
convention for the augmented simplicial object \w{\rho\fMS\Wu\to\fY}
(in \w[)]{s\MAS} so that the extra degeneracy map is indexed as:
$$
s\sb{-1}\sp{k}~=~(\vare\sb{\SA\sp{k+1}\fY})\op:\rho\fMS\bW\sp{k}~\to~
\rho\fMS\bW\sp{k+1}
$$
\noindent with
\begin{myeq}\label{eqsminus}
d\sb{i}s\sb{-1}~=~\begin{cases} \Id & \hsm\text{if}~i=0\\
s\sb{-1}d\sb{i-1} & \hsm\text{if}~i\geq 1
\end{cases}
\end{myeq}
\noindent (which follows from the fact that
\w{\vare\sb{\fY}\circ\zeta\sb{\fY}=\Id} in \w[,]{\Mop} and $\vare$ is a
natural transformation).
This completes the survey of the known facts about the ``standard construction''.

If we knew that \w{s\sp{k}\sb{-1}} induced a group homomorphism
\w{G\sb{k}\to G\sb{k+1}} for each \w[,]{k\geq -1} we could deduce directly
that the augmented simplicial group \w{\Gd\to G\sb{-1}} is acyclic.  However,
unlike the rest of the simplicial structure on \w[,]{\fVdd} the maps \w{s\sb{-1}}
are not induced from maps of full \Sma[s,] so they need not respect the $H$-space
structure induced by that of \w[.]{\uA{n}}

Nevertheless, we can modify the usual proof of the acyclicity of
\w{\Gd\to G\sb{-1}} as follows:  any Moore cycle
\w{\alpha\in C\sb{k}\Gd\subseteq G\sb{k}} (with \w{d\sb{i}\alpha=0} for
\w[)]{0\leq i\leq k} may be represented by an element
\w[,]{a\in\fVt{k}\lin{\uA{n}}} equipped with elements
\w{b\sb{i}\in\fVt{k-1}\lin{P\uA{n}}} such that
\w{d\sb{i}a=(\fVt{k-1}\lin{q\sb{n}})(b\sb{i})} \wb[.]{0\leq i\leq k}

If we let \w[,]{c:=s\sb{-1}\sp{k}a\in\fVt{k+1}\lin{\uA{n}}} by \wref{eqsminus}
we have \w{d\sb{0}(c)=a} and
$$
d\sb{i}c~=~d\sb{i}s\sb{-1}\sp{k}a~=s\sb{-1}\sp{k-1}d\sb{i}a~=~
s\sb{-1}\sp{k-1}(\fVt{k-1}\lin{q\sb{n}})(b\sb{i})~=~
(\fVt{k-1}\lin{q\sb{n}})(s\sb{-1}\sp{k-1}b\sb{i})\hsm\text{for}\hsm 1\leq i~,
$$
\noindent so that \w{[c]\in\pi\sb{0}\fVt{k+1}\lin{\uA{n}}=G\sb{k+1}} is
a Moore chain, with \w[.]{\partial\sb{k+1}([c]]=\alpha}

Thus we have shown that \w{\zeta':\fMS\Wu\to\cd{\fX}} is a weak equivalence
of simplicial \Sma[s.]
\end{proof}

\begin{corollary}\label{cresma}
Given a set $\F$ of $\Omega$-spectra in a model category $\C$,
and an object \w[,]{\bY\in\C} let \w{\Wu\in c\C} be obtained from the \Sma
\w{\fX=\fMS\bY}  as in Proposition \ref{presma}.
Then for every \w{\eA=(\uA{k})\sb{k\in\ZZ}} in $\F$ and \w[,]{n\in\ZZ}
the augmented simplicial abelian group \w{[\Wu,\uA{n}]\to[\bY,\uA{n}]}
is acyclic.
\end{corollary}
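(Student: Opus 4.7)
The plan is to derive this corollary almost immediately from Proposition \ref{presma}, by carefully unwinding the identification of the various simplicial groups involved.

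First, I would note that Proposition \ref{presma} produces a weak equivalence of simplicial $\F$-Stover mapping algebras $\zeta':\fMS\Wu \to \cd{\fX}$, where $\fX = \fMS\bY$. Applying the forgetful functor $U^{\Stov}:\MFS \to \MF$ gives a weak equivalence of simplicial $\F$-mapping algebras $\fMF\Wu \to \cd{\fMF\bY}$. By the definition of weak equivalence of simplicial \Fma[s] given in \S\ref{stma}, this means that for each $\bB \in \TsF$, and in particular for $\bB = \uA{n}$ with $\eA \in \F$, the induced map of simplicial groups
$$
\pi_0\,\fMF\Wu\lin{\uA{n}} \;\longrightarrow\; \pi_0\,\cd{\fMF\bY}\lin{\uA{n}}
$$
is a weak equivalence.

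Next, I would identify these simplicial groups with the ones in the statement. In each cosimplicial degree $k$ we have $\pi_0\,\fMF\bW^k\lin{\uA{n}} = \pi_0\,\map_{\C}(\bW^k,\uA{n}) = [\bW^k,\uA{n}]$, so the left-hand simplicial group is precisely $[\Wu,\uA{n}]$; on the right, $\cd{\fMF\bY}$ is constant, so the target is the constant simplicial group on $[\bY,\uA{n}]$. Since each $\uA{n}$ is an infinite loop space (being part of the $\Omega$-spectrum $\eA$), the set $[-,\uA{n}]$ naturally carries the structure of an abelian group, so $[\Wu,\uA{n}]$ is in fact a simplicial abelian group and the augmentation to $[\bY,\uA{n}]$ is a homomorphism.

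Finally, I would invoke the fact that a weak equivalence of simplicial abelian groups from $[\Wu,\uA{n}]$ to the constant object on $[\bY,\uA{n}]$ is exactly the condition that the augmented simplicial abelian group $[\Wu,\uA{n}] \to [\bY,\uA{n}]$ is acyclic (equivalently, the Moore complex is acyclic). I do not expect any real obstacle here: the bulk of the work, namely the construction of the contracting homotopy via the extra degeneracies $s_{-1}^{k} = (\vare_{\SA^{k+1}\fY})\op$ satisfying \wref{eqsminus}, has already been carried out inside the proof of Proposition \ref{presma}, and this corollary is essentially the statement of that acyclicity after evaluation at $\uA{n}$.
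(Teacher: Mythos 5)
Your proposal is correct and follows exactly the route the paper intends: the corollary carries no separate proof there because it is precisely the acyclicity of the augmented simplicial group $\Gd=\pi\sb{0}(\fMS\Wu\lin{\uA{n}})\to\pi\sb{0}\fX\lin{\uA{n}}$ established inside the proof of Proposition \ref{presma} (via the extra degeneracies and the Moore-cycle argument), specialized to $\fX=\fMS\bY$ so that $\pi\sb{0}\fMS\bW\sp{k}\lin{\uA{n}}=[\bW\sp{k},\uA{n}]$ and $\pi\sb{0}\fX\lin{\uA{n}}=[\bY,\uA{n}]$. Your identifications and the observation that a weak equivalence onto the constant simplicial abelian group is exactly acyclicity are all as the paper has them.
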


Although the construction above works for any family of $\Omega$-spectra $\F$,
its relevance to our original question on recovering $\bY$ from
\w{\map\sb{\C}(\bY,\bA)} is restricted to the following special case:

\begin{thm}\label{tresm}
Assume that $R$ is a field, \w{\bY\in\Sa} is $R$-good (cf.\ \cite[I, 5.1]{BKanH}),
and \w{X:=\mapa(\bY,\bA)} for \w[.]{\bA=\KR{n}} Let \w{\fX} be an
\Sma[-structure] on $X$ for \w{\F:=\{\HR{R}\}} and \w{\kappa=\kappa\sb{\fMS\bY}}
(cf.\ \S \ref{nkappa}), and let \w{\Wu} be constructed from $\fX$ as in
Proposition \ref{presma}. Then \w[,]{X\simeq\mapa(\uTot\Wu,\bA)} which means that
we can recover $\bY$ from $\fX$ up to $\bA$-equivalence.
\end{thm}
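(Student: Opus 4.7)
The plan is to show that the cosimplicial object $\Wu$ produced by Proposition \ref{presma} is a weak $\G$-resolution of $\bY$ in the sense of Definition \ref{dwgf}, for a class $\G$ of injective models containing $\bA=\KR{n}$. Its total space $\uTot\Wu$ will then realize the Bousfield-Kan $R$-completion $R\sb{\infty}\bY$; and since $R$ is a field and $\bY$ is $R$-good, the natural map $\bY\to R\sb{\infty}\bY$ is an $\bA$-equivalence, yielding the desired weak equivalence $\mapa(\uTot\Wu,\bA)\simeq\mapa(\bY,\bA)=X$.

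To execute this, take $\G$ to be the class containing all $R$-GEMs together with all $\F$-Stover objects for $\kappa=\kappa\sb{\fMS\bY}$; this class is closed under loops and consists of injective models, since, as noted in \S \ref{ssosc}, every Stover object is $\G$-injective whenever $\G\supseteq\F=\{\HR{R}\}$. By construction each $\bW\sp{n}=\RA\SA\sp{n}\fY$ is an $\F$-Stover object, so lies in $\G$; after passing to a functorial Reedy fibrant replacement (which does not affect $\uTot$ by Definition \ref{dtotal}), $\Wu$ is weakly $\G$-fibrant. To verify that the coaugmentation $\cu{\bY}\to\Wu$ is a $\G$-equivalence, it suffices to check that $[\Wu,G]\to[\cu{\bY},G]$ is a weak equivalence in $s\Grp$ for each $G\in\G$. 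For $G=\KR{k}$ this is exactly Corollary \ref{cresma}: the augmented simplicial abelian group $[\Wu,\KR{k}]\to[\bY,\KR{k}]$ being acyclic translates, via Lemma \ref{lhgroup}, into the statement that $[\Wu,\KR{k}]$ is weakly equivalent as a simplicial group to the constant simplicial group on $[\bY,\KR{k}]$. For Stover $G$ built from the $\KR{k}$ via pullbacks of the form \wref[,]{eqstoverpb} the same conclusion follows because $[\Wu,-]$ sends the defining pullback to a homotopy pullback of simplicial groups. Hence $\Wu$ is a weak $\G$-resolution of $\bY$, and by the Bousfield-Kan identification (cf.\ \cite[\S 6]{BousC} and \cite[X.5]{BKanH}) its total space $\uTot\Wu$ is weakly equivalent to $R\sb{\infty}\bY$. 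The hypothesis that $\bY$ is $R$-good then guarantees that $\bY\to R\sb{\infty}\bY$ is an $\HdR{-}$-isomorphism; since $R$ is a field there is no $\Ext$ obstruction, so this promotes to a weak equivalence $\mapa(R\sb{\infty}\bY,\KR{n})\simeq\mapa(\bY,\KR{n})$ on each homotopy group, completing the proof.

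The main obstacle I anticipate is the careful bookkeeping around fibrancy and the translation between the acyclicity statement of Corollary \ref{cresma} and the $s\Grp$-language required in \S \ref{sgrmc}: the corollary delivers information about an underlying augmented simplicial abelian group, whereas a $\G$-equivalence is phrased in terms of the simplicial group $[\Wu,G]$. Bridging these requires that the $H$-group structure induced by $\mu:\uA{k}\times\uA{k}\to\uA{k}$ interact compatibly with the comonadic coface and codegeneracy maps of $\Wu$, a compatibility already used implicitly in the proof of Proposition \ref{presma}. A secondary delicate point is that the hypotheses ``$R$ a field'' and ``$\bY$ is $R$-good'' are precisely what is needed to promote an $\HuR{-}$-isomorphism into an $\bA$-equivalence; relaxing either of these would require the stronger machinery of Theorem C.
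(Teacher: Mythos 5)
Your overall strategy is the same as the paper's: show that \w{\Wu} is a weak $\G$-resolution of $\bY$ for a class $\G$ with \w[,]{\LG\simeq R\sb{\infty}} conclude \w{\uTot\Wu\simeq R\sb{\infty}\bY} from \cite[\S 7.7, Theorem 9.5]{BousC}, and then use $R$-goodness (via \cite[Proposition 8.5]{BousC}) to get the weak equivalence of mapping spaces into \w[.]{\KR{n}} However, there is a genuine gap in your verification that \w{\cu{\bY}\to\Wu} is a $\G$-equivalence. For Bousfield's identification of \w{\LG\bY} with \w{R\sb{\infty}\bY} you must take $\G$ to be (the \ww{\Omega\sp{\infty}}-spaces of all \ww{\HR{R}}-module spectra, i.e.) \emph{all} simplicial $R$-modules --- which is what your ``all $R$-GEMs'' must mean --- but you only check the acyclicity of \w{[\Wu,G]\to[\bY,G]} for \w{G=\KR{k}} and for $\F$-Stover objects. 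A general $R$-GEM \w{\KP{V}{k}} with $V$ infinite-dimensional is \emph{not} a finite homotopy limit of copies of \w{\KR{k}} (indeed \w{\KP{\bigoplus\sb{\lambda}R}{k}\not\simeq\prod\sb{\lambda}\KR{k}} for infinite $\lambda$), so neither of your two cases covers it, and the claim does not formally follow from Corollary \ref{cresma} without an additional argument.

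The missing step is exactly the first paragraph of the paper's proof: since $R$ is a field, each \w{V\sb{k}=\bigoplus\sb{\lambda\sb{k}}R} is a \emph{split} summand of \w[,]{\prod\sb{\lambda\sb{k}}R} so any simplicial $R$-module $M$ is a homotopy retract of \w{M'=\prod\sb{k}\prod\sb{\lambda\sb{k}}\KR{k}} and hence \w{[\Wu,M]\to[\bY,M]} is a retract of a product of the augmented simplicial groups \w{[\Wu,\KR{k}]\to[\bY,\KR{k}]} handled by Corollary \ref{cresma}; acyclicity passes to products and retracts. This is also where the hypothesis that $R$ is a field is really used (over a general ring the splitting fails, which is why Section \ref{cmac} of the paper needs a different construction) --- so your closing remark attributing the field hypothesis to ``promoting an \w{\HuR{-}}-isomorphism to an $\bA$-equivalence'' misplaces its role; the $R$-good hypothesis alone is what drives the final mapping-space equivalence via \cite[Proposition 8.5]{BousC}. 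With the retract argument inserted, your proof matches the paper's.
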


\begin{proof}
Any simplicial $R$-module is of the form
\w{M\simeq\prod\sb{n=0}\sp{\infty}\,\KP{V\sb{n}}{n}} for some $\NN$-graded
$R$-vector space \w[,]{(V\sb{n})\sb{n=0}\sp{\infty}} up to weak equivalence.
Moreover, each \w{V\sb{n}=\bigoplus\sb{\lambda\sb{n}}R} embeds as a split summand in
\w[.]{V'\sb{n}=\prod\sb{\lambda\sb{n}}R} Thus $M$ is a homotopy retract of
\w{M'=\prod\sb{n=0}\sp{\infty}\,\KP{V'\sb{n}}{n}} so that for any (augmented)
cosimplicial space \w{\bY\to\Wu} the (augmented) simplicial abelian group
\w{[\Wu,M]\to[\bY,M]} is a retract of \w[.]{[\Wu,M']\to[\bY,M']}
However, for any space $\bX$ we have
\w[,]{[\bX, M']\cong\prod\sb{n=0}\sp{\infty}\prod\sb{\lambda\sb{n}}\,[\bX,\,\KR{n}]}
so if \w{[\Wu,\KR{n}]\to[\bY,\KR{n}]} is acyclic for each \w[,]{n\geq 0}
so is \w[.]{[\Wu,M]\to[\bY,M]}

Now let $\G$ denote the class of \ww{\Omega\sp{\infty}}-spaces of
\ww{\HR{R}}-module spectra \wh or equivalently, all simplicial $R$-modules
(cf.\  \cite[IV, \S 2.7]{EKMMayR}). If \w[,]{\fX:=\fMF\bY} then the
cosimplicial resolution \w{\Wu} is a weak $\G$-resolution of $\bY$ (cf.\
\cite[Definition 6.1]{BousC}), by the above and Corollary \ref{cresma}, so
\w{\uTot\Wu\simeq R\sb{\infty}\bY} by \cite[\S 7.7 \& Theorem 9.5]{BousC}.
Therefore, \w{\bY\to\uTot\Wu} induces a weak equivalence
\w{\mapa(\bY,\KR{n})\to\mapa(\uTot\Wu,\KR{n})} by \cite[Proposition 8.5]{BousC}.
\end{proof}

%
%
\sect{Mapping algebras and completions}
\label{cmac}

By Theorem \ref{tresm}, the construction \w{\Wu} of Section \ref{cmacr}
can be used to recover $\bY$ from a realizable \Sma \w{\fMS\bY} when working
over a field. However, for more general rings or ring spectra, we need
a more elaborate choice of $\G$ consisting of \emph{all}
\ww{\HR{R}}-module spectra (or equivalently, all simplicial $R$-modules) \wh
a proper class. For this purpose, one could
restrict attention the collection of simplicial) $R$-modules of cardinality
$<\nu$, for some inaccessible cardinal $\nu$, but in order to
avoid unnecessary set-theoretic assumptions we describe a modified version of
the dual Stover construction, adapted to the particular \w{\bY\in\Sa} at hand.

\begin{defn}\label{dsimrmod}
For any commutative ring $R$, consider the category \w{s\Mod{R}} of all
simplicial $R$-modules. Let \w{\sMR} denote (a skeleton of) the subcategory
of \w{s\Mod{R}} with the same objects, but only proper monomorphisms as maps.
For every cardinal $\lambda$ we denote by \w{(\sMR)\sb{\lambda}} the full
subcategory of \w{\sMR} consisting of objects of cardinality $\leq\lambda$.
\end{defn}

\begin{mysubsect}{The modified dual Stover construction}
\label{smdsc}

When $R$ is not a field, the model category structure of Bousfield
(cf.\ \S \ref{sgrmc}) defines weak equivalences of cosimplicial spaces using the
class $\G$ of all \ww{\Omega\sp{\infty}}-spaces of \ww{\HR{R}}-module spectra.
However, since $\G$ is a proper \emph{class}, the dual Stover construction of
\S \ref{sdsc} makes no sense for \w{\hF:=\G} (cf.\ \wref[).]{eqhatf}
Thus, we need to replace $\G$ by a \emph{set} of \ww{\Omega\sp{\infty}}-spaces of
\ww{\HR{R}}-module spectra, which still suffice to recover $\bY$
from \w[.]{\fMS\bY} For this purpose we shall use the set
\w[,]{\hF:=(\sMR)\sb{\lambda}} for a suitable cardinal
\w{\lambda=\widehat{\lambda}\sb{\bY}} depending on $\bY$ \wh see
\S \ref{dbiglambda} below.
\end{mysubsect}

\begin{defn}\label{dsdiscrma}
If $R$ is a commutative ring, $\lambda$ is a cardinal, and
\w[,]{\hF:=(\sMR)\sb{\lambda}} a \emph{\dRma}
(for $\lambda$) is a functor \w[,]{\fY:\hF\to\Seta\sp{J}} written
\w{M\mapsto (P\fY\lin{M}\xra{p\sb{M}}\fY\lin{M})}
where $J$ denotes the single-arrow category \w{0\to 1} (cf.\ \S \ref{ddiscama}).
The category of \dRma[s] will be denoted by \w[.]{\MRrd}
\end{defn}

\begin{defn}\label{dsdiscrm}
Note that \w{\hF=(\sMR)\sb{\lambda}} is indeed the set of
\ww{\Omega\sp{\infty}}-spaces in a certain collection $\F$ of $\Omega$-spectra
which are \ww{\HR{R}}-module spectra, as in \wref[.]{eqhatf}

If $\fX$ is an \Fma for this $\F$, the \emph{associated} \dRma \w{\rho\fX} is
defined by setting
$$
\rho\fX\lin{M}~:=~
\left(\fX\lin{p\sb{M}}\sb{0}:\fX\lin{PM}\sb{0}~\to~\fX\lin{M}\sb{0}\right)
$$
\noindent (see \S \ref{ddiscam}).

This defines a functor \w[,]{\rho:\MF\to\MRrd} since $\fX$ takes
values in Kan complexes. Moreover, we may define a covariant functor
\w{\LR{\lambda}:\Sa\to\MRop} by \w{(\LR{\lambda}\bY)\lin{M}:=\rho\fMF\bY\lin{M}}
for each \w[.]{M\in\hF}
\end{defn}

\begin{remark}\label{rsopposite}
Note that \w{\MRrd} is just the diagram category
\w[,]{\SGa} indexed by a category $\Gamma$ whose maps consist of the arrows
in squares of the form:
\mydiagram[\label{eqsquaregamma}]{
PM\sb{0} \ar[d]^{q\sb{M\sb{0}}}\ar[rr]^{Pj} && PM\sb{1} \ar[d]^{q\sb{M\sb{1}}}\\
M\sb{0} \ar[rr]^{j} && M\sb{1}
}
\noindent for any map \w{j:M\sb{0}\to M\sb{1}} in \w[,]{(\sMR)\sb{\lambda}}
where the horizontal arrows may be composed for
\w[.]{M\sb{0}\hra M\sb{1}\hra M\sb{2}}

Therefore, we can give an explicit description for the right adjoint
\w{\RR{\lambda}:\MRop\to\Sa} to \w{\LR{\lambda}:\Sa\to\MRop} as in
\S \ref{ropposite}, with \w{\RR{\lambda}\fX} described by the limit of a diagram
described as follows:
\begin{enumerate}
\renewcommand{\labelenumi}{(\alph{enumi})~}
\item For each \w[,]{M\sb{0}\in\hF} consider the map of sets
\w[.]{\fX\lin{q\sb{M\sb{0}}}:\fX\lin{PM\sb{0}}\to\fX\lin{M\sb{0}}}
If \w{\phi\in\fX\lin{M\sb{0}}} is contained in
\w[,]{\Image(\fX\lin{q\sb{M\sb{0}}})\subseteq \fX\lin{M\sb{0}}} then for any
\w{j:M\sb{0}\hra M\sb{1}} also \w{\fX\lin{j}(\phi)\in\fX\lin{M\sb{0}}} is contained
in \w[,]{\Image(\fX\lin{q\sb{M\sb{1}}})\subseteq \fX\lin{M\sb{1}}} and we have a
``corner of corners'' of the form:
\mysdiag[\label{eqcorncorn}]{
& \prod\sb{\Phi\in(\fX\lin{q\sb{M\sb{0}}})^{-1}(\phi)}~PM\sb{0}
\ar[d]\sp<<<<{\prod p\sb{M\sb{0}}} \ar[rd]\sp<<<<<<<<{\prod Pj} && \\
M\sb{0} \ar[r]\sp<<<<<{\diag} \ar[rd]\sb{j}&
\prod\sb{\Phi\in(\fX\lin{q\sb{M\sb{0}}})^{-1}(\phi)}~M\sb{0}\ar[rd]\sb{\prod{j}} &
\prod\sb{\Phi\in(\fX\lin{q\sb{M\sb{0}}})^{-1}(\phi)}~PM\sb{1}
\ar[d]\sp<<<<{\prod p\sb{M\sb{1}}} &\\
& M\sb{1} \ar[r]\sp<<<<<<<<{\diag} &
\prod\sb{\Phi\in(\fX\lin{q\sb{M\sb{0}}})^{-1}(\phi)}~M\sb{1} &
\prod\sb{\Phi\in(\fX\lin{q\sb{M\sb{1}}})^{-1}(\phi)}~PM\sb{1}
\ar[d]\sp<<<<{\prod p\sb{M\sb{1}}} \ar[lu]\sb<<<<<<<{\prod(\fX\lin{j})\sp{\ast}} \\
&& M\sb{1} \ar[lu]\sp{=} \ar[r]\sb<<<<<<<<<<{\diag} &
\prod\sb{\Phi\in(\fX\lin{q\sb{M\sb{1}}})^{-1}(\phi)}~M\sb{1}
\ar[lu]\sp{\prod(\fX\lin{j})\sp{\ast}}
}
\noindent for each square \wref[.]{eqsquaregamma}
\item On the other hand, if \w{\phi\in\fX\lin{M\sb{0}}} is not contained in
\w[,]{\Image(\fX\lin{q\sb{M\sb{0}}})} then we simply have an arrow from the copy
of \w{M\sb{0}} indexed by $\phi$ to the copy of \w{M\sb{1}} indexed by
\w{\fX\lin{j}(\phi)} in our diagram.
\end{enumerate}
\end{remark}

\begin{defn}\label{dhmonad}
The right adjoint \w{\RR{\lambda}:\MRop\to\Sa} to \w{\LR{\lambda}} defines a monad
\w{\hTR{\lambda}:=\RR{\lambda}\circ\LR{\lambda}} on \w[,]{\Sa} with unit
\w{\het{}=\widehat{\Id\sb{\LR{\lambda}}}:\Id\to\hTR{\lambda}} and multiplication
\w[,]{\hmu=\RR{\lambda}\widetilde{\Id\sb{\hTR{\lambda}}}:
\hTR{\lambda}\circ\hTR{\lambda}\to\hTR{\lambda}}
as well as a comonad \w{\SR{\lambda}:=\LR{\lambda}\circ\RR{\lambda}} on
\w[,]{\MRop} with counit
\w{\hvar=\widetilde{\Id\sb{\RR{\lambda}}}:\SR{\lambda}\to\Id} and comultiplication
\w[.]{\hdel=\LR{\lambda}\widehat{\Id\sb{\SR{\lambda}}}:
\SR{\lambda}\to\SR{\lambda}\circ\SR{\lambda}}
\end{defn}

\begin{remark}\label{rlimits}
Since all the limits in the construction of \w{\RR{\lambda}\fX} take place in
\w[,]{\sMR} the analogue of Proposition \ref{pcoalgebra}, stating that
\w{\rho\fX} has natural structure of a coalgebra over \w[,]{\SR{\lambda}}
will hold for any \Fma $\fX$ which can be extended to a \ww{\bT\sb{\F'}}-\ma for
\w{\hF'=(\sMR)\sb{\mu}} where $\mu$ bounds the cardinalities of all objects
in \w[.]{\RR{\lambda}\fX}
\end{remark}

As in Section \ref{cmacr} we then have:

\begin{prop}\label{pcosimpres}
Assume given a commutative ring $R$ and a \Fma $\fX$ for
\w[,]{\hF:=(\sMR)\sb{\lambda}} which is a coalgebra over \w[,]{\SR{\lambda}}
and let \w{\hWlu} be obtained from $\fX$ as in Proposition \ref{presma} by iterating
\w{\hTR{\lambda}} on \w[,]{\SR{\lambda}\rho\fX} with an augmentation
\w[.]{\fMF\hWlu\to\fX} Then each \w{\hWl{n}} is in \w[,]{\sMR} and
for every \w[,]{M\in\hF} the augmented simplicial $R$-module
\w{[\hWlu,M]\to\pi\sb{0}\fX\lin{M}} is acyclic.
\end{prop}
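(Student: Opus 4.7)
The plan is to mirror the proof of Proposition~\ref{presma} almost verbatim, replacing the dual Stover construction $\RA$ with its modified version $\RR{\lambda}$, while recording two observations specific to the $R$-module setting. Using the coalgebra structure on $\rho\fX$ given by hypothesis, I would iterate the comonad $\SR{\lambda}$ on $\rho\fX$ to form an augmented simplicial object $\fVdd\to\rho\fX$ in $\MRop$, with the extra degeneracies $s\sb{-1}$ coming from the coalgebra map $\zeta\sb{\rho\fX}$. Applying $\RR{\lambda}$ dimensionwise then yields the cosimplicial object $\hWlu$, equipped with its coaugmentation down to $\fX$, just as in the diagrams \wref{eqsimpma}--\wref{eqsimpsma}.

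The first new observation is that each $\hWl{n}$ lives in $\sMR$. Unwinding Remark~\ref{rsopposite}, the functor $\RR{\lambda}$ is defined by a limit whose diagram consists entirely of copies of the $R$-modules $M$ and $PM$ drawn from $\hF=(\sMR)\sb{\lambda}$, connected by $R$-module maps (diagonals, projections $p\sb{M}$, and the arrows $j$ and $Pj$); this limit is therefore computed in $s\Mod{R}$ and automatically lands in $\sMR$. An induction on $n$ gives the same for $\hWl{n}=\RR{\lambda}(\SR{\lambda})\sp{n}\rho\fX$. The second observation is that the face and degeneracy maps of $[\hWlu,M]$ induced by the cosimplicial structure of $\hWlu$ are $R$-module homomorphisms, whereas the extra degeneracy $s\sb{-1}$ arising from the coalgebra structure is only a map of pointed sets, exactly as in the proof of Proposition~\ref{presma}. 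Consequently, acyclicity of $[\hWlu,M]\to\pi\sb{0}\fX\lin{M}$ reduces to the same elementwise Moore cycle/chain argument: a Moore cycle $\alpha\in C\sb{k}[\hWlu,M]$ represented by $a\in\fVt{k}\lin{M}$ together with witnesses $b\sb{i}\in\fVt{k-1}\lin{PM}$ satisfying $d\sb{i}a=(\fVt{k-1}\lin{q\sb{M}})(b\sb{i})$ yields $c:=s\sb{-1}\sp{k}a\in\fVt{k+1}\lin{M}$, whose class $[c]\in[\hWl{k+1},M]$ is a Moore chain with boundary $\alpha$, by the identical computation using the simplicial identities \wref{eqsminus}.

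The main technical obstacle is the one flagged in Remark~\ref{rlimits}: the cardinal $\lambda=\widehat{\lambda}\sb{\bY}$ (cf.~\S\ref{smdsc}) must be chosen large enough that every object produced while iterating the modified Stover construction on $\rho\fX$ still has cardinality $\leq\lambda$, so that each iterate $(\SR{\lambda})\sp{n}\rho\fX$ remains a bona fide \dRma[,] and the whole tower $\hWlu$ is well defined inside $\sMR$. Once this cardinality bookkeeping is secured, no new homotopical content beyond what is already present in Proposition~\ref{presma} and Corollary~\ref{cresma} is required.
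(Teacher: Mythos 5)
Your proposal is correct and takes essentially the same approach as the paper, which likewise observes that \w{\RR{\lambda}} is a limit computed entirely in \w{s\Mod{R}} (so each \w{\hWl{n}} lies in \w[),]{\sMR} and reduces the acyclicity of \w{[\hWlu,M]\to\pi\sb{0}\fX\lin{M}} to the contractibility of the augmented simplicial \dRma \w[,]{\rho\fX\leftarrow\LR{\lambda}\hWlu} handled by the same elementwise Moore cycle/chain argument as in Proposition \ref{presma}. Your closing paragraph about cardinality bookkeeping is harmless but unnecessary for this statement: the coalgebra structure on \w{\rho\fX} is part of the hypothesis (that is where Remark \ref{rlimits} bites), and once it is given, the comonad \w{\SR{\lambda}} iterates on \w{\MRop} without any further size constraints.
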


\begin{proof}
We obtain a cosimplicial object \w{\hWlu} over \w{\Sa} using the fact that
\w{\rho\fX} is a coalgebra over the comonad \w[.]{\SR{\lambda}}
By the description in \S \ref{rsopposite} we see that for any
\dRma $\fY$, \w{\RR{\lambda}\fY} is a limit of simplicial $R$-modules (and
all the maps in  question are maps in \w[),]{s\Mod{R}} so in particular,
\w{(\hTR{\lambda})\sp{n+1}\bY:=\RR{\lambda}(\LR{\lambda}(\hTR{\lambda})\sp{n}\bY)}
is a simplicial $R$-module.

Note that for any \w[,]{M\in\hF} the functor \w{[-,M]:\Sa\to\Mod{R}}
factors through \w[,]{\LR{\lambda}(-)\lin{M}} by Lemma \ref{lhgroup},
so to show that \w{[\hWlu,M]\to\pi\sb{0}\fX\lin{M}} is acyclic, it suffices to
show that the augmented simplicial \dRma \w{\rho\fX\leftarrow\LR{\lambda}\hWlu} is
contractible, which follows from \cite[Proposition 8.6.10]{WeibHA} as in the proof
of Proposition \ref{presma}.
\end{proof}

\begin{corollary}\label{ccosimpres}
Given a commutative ring $R$, a cardinal $\lambda$,  and \w[,]{\bY\in\Sa} we
obtain a coaugmented cosimplicial space \w{\bY\to\hWlu} (by taking
\w[),]{\fX:=\fMF\bY} such that for every \w{M\in\hF} the augmented
simplicial $R$-module \w{[\hWlu,M]\to[\bY,M]} is acyclic.
\end{corollary}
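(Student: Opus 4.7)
The plan is to apply Proposition \ref{pcosimpres} to the realizable \Fma $\fX := \fMF\bY$ (where $\F$ is taken so that its associated $\Omega^{\infty}$-spaces fill out $\hF = (\sMR)\sb{\lambda}$). The only thing that needs to be verified before one invokes the proposition is that the \dRma $\rho\fX = \LR{\lambda}\bY$ carries a canonical structure of coalgebra over the comonad $\SR{\lambda}$ of Definition \ref{dhmonad}, so that the acyclicity conclusion of Proposition \ref{pcosimpres} is available.

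For this step I would follow the pattern of Example \ref{egcoalg}, defining
\[
\zeta ~:=~ \LR{\lambda}(\het{\bY}) : \LR{\lambda}\bY ~\to~ \LR{\lambda}\RR{\lambda}\LR{\lambda}\bY ~=~ \SR{\lambda}\LR{\lambda}\bY
\]
in $\MRop$, where $\het{\bY} : \bY \to \hTR{\lambda}\bY = \RR{\lambda}\LR{\lambda}\bY$ is the unit of the adjunction $\LR{\lambda} \dashv \RR{\lambda}$. That $\zeta$ is a section of $\hvar\sb{\LR{\lambda}\bY}$ and satisfies the coassociativity identity \wref{eqcoalg} is formal from the triangle identities of the adjunction, interpreted in $\MRop$ (it is exactly the verification carried out for $\SA$ in Example \ref{egcoalg}, transcribed into the modified set-up of \S \ref{smdsc}--\ref{dhmonad}).

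Granted the coalgebra structure, Proposition \ref{pcosimpres} produces a cosimplicial object $\hWlu \in c\Sa$ with each $\hWl{n} \in \sMR$, an augmentation $\fMF\hWlu \to \fX$ of simplicial \Fma[s,] and the acyclicity of $[\hWlu, M] \to \pi\sb{0}\fX\lin{M}$ for every $M \in \hF$. The coaugmentation $\bY \to \hWlu$ is then obtained by declaring $\hWl{-1} := \bY$ with $d\sp{0} := \het{\bY} : \bY \to \RR{\lambda}\LR{\lambda}\bY = \hWl{0}$, the higher coface and codegeneracy data being those of the standard cosimplicial resolution attached to the monad $\hTR{\lambda}$; the compatibility with $\fMF\hWlu \to \fX$ comes from Lemma \ref{lfreema} (the augmentation in the realizable case is represented by $\bY$ itself). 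Finally, since $\fX = \fMF\bY$ gives $\pi\sb{0}\fX\lin{M} = \pi\sb{0}\map\sb{\Sa}(\bY, M) = [\bY, M]$, the acyclicity statement acquires the desired form $[\hWlu, M] \to [\bY, M]$.

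I do not foresee a genuine obstacle: the corollary is mostly bookkeeping once the coalgebra structure on the realizable $\LR{\lambda}\bY$ is in place, and that structure is purely formal from the adjunction $\LR{\lambda} \dashv \RR{\lambda}$. The only place where one must exercise some care is checking that Remark \ref{rlimits} applies to $\fX = \fMF\bY$, i.e. that the cardinality bounds on the limits defining $\RR{\lambda}\fX$ are compatible with the choice of $\lambda$; this is where the dependence $\lambda = \widehat{\lambda}\sb{\bY}$ indicated in \S \ref{smdsc} plays its role, but no new ideas beyond those already collected in the excerpt are required.
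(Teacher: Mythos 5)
Your proposal is correct and follows exactly the route the paper intends: the corollary is stated without separate proof precisely because a realizable \Fma\ $\fMF\bY$ automatically carries the \ww{\SR{\lambda}}-coalgebra structure via the adjunction unit (the analogue of Example \ref{egcoalg}), after which Proposition \ref{pcosimpres} applies and \w{\pi\sb{0}\fMF\bY\lin{M}=[\bY,M]} gives the stated form. Your closing caution about Remark \ref{rlimits} is unnecessary in the realizable case, but it does no harm.
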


Note that if $\fX$ is realizable, it automatically has an
\ww{\SR{\lambda}}-coalgebra structure, as in \S \ref{egcoalg}.

\begin{defn}\label{dsrmod}
For any simplicial $R$-module $M$, \w[,]{\bX\in\Sa} and \w{\phi:\bX\to\UM} in
\w[,]{\Sa} we denote by \w{\hIm(\phi)} the smallest simplicial submodule of $M$
containing \w[.]{\Image(\phi)} If \w[,]{\hIm(\phi)=M}
we call $\phi$ an \emph{effective epimorphism}, denoted by
\w[.]{\phi:\bX\efpic M}

Moreover, the path space construction \w{\UPM}
of \S \ref{dpathloop} is also a simplicial $R$-module, and
\w{\Phi:\bX\to \UPM} is a nullhomotopy of \w{\phi:\bX\to \UM} if
\w[.]{p\sb{M}\Phi=\phi} However, \w{\hIm(\Phi)\subseteq\UPM} need not be a
path space, so we need the following:

Let \w{\widetilde{\Phi}:C\bX\to \UM} denote the adjoint of $\Phi$
(see \wref{eqpathloop} and \cite[Ch.\ II, 1.3]{QuiH}), with
\w{\widetilde{\Phi}\circ\inc=\phi} for the inclusion \w[.]{\inc:\bX\hra C\bX}
Then \w{M':=\hIm(\phi)} is contained in \w[,]{M'':=\hIm(\widetilde{\Phi})}
fitting into a commutative diagram
$$
\xymatrix@R=13pt{
\bX \ar@/^{2.5pc}/[rrrrrrd]^{\phi}
\ar@{^{(}->}[d]^{\inc} \ar@{->>|}[rr]_{\psi} &&
\UM' \ar@{^{(}->}[rrd]_{j'} \ar@{^{(}->}[rrrrd]^{j} && \\
C\bX \ar@/_{2.0pc}/[rrrrrr]_{\widetilde{\Phi}} \ar@{->>|}[rrrr]^{\widetilde{\Psi}} &&&&
\UM'' \ar@{^{(}->}[rr]_{j''} && \UM
}
$$
\noindent and the original nullhomotopies fit into an adjoint
commutative diagram:
\mydiagram[\label{eqesnull}]{
&&&& \UPM'' \ar[d]^{p\sb{M''}} \ar@{^{(}->}[rr]_{Pj''} && \UPM \ar[d]^{p\sb{M}} \\
\bX \ar@/^{3.0pc}/[rrrrrru]\sp{\Phi} \ar@/_{1.5pc}/[rrrrrr]\sb{\phi}
\ar[rrrru]\sp{\Psi} \ar@{->>|}[rr]\sp<<<<<<<<<<<<<<{\psi} &&
\UM' \ar@{^{(}->}[rr]^{j'} && \UM'' \ar@{^{(}->}[rr]^{j''} && \UM~.
}
\noindent We write
\begin{myeq}\label{eqhhimage}
\hhIm(\Phi)~:=~P(\hIm(\widetilde{\Phi}))~,
\end{myeq}
\noindent and call a nullhomotopy $\Phi$ \emph{effectively surjective} if
\w{\UM=\UM''} in \wref{eqesnull} (that is, if \w[).]{\UPM=\hhIm(\Phi)}
We denote the set of all effectively surjective
nullhomotopies of $\phi$ by \w[.]{\hHom(\bX,PM)\sb{\phi}}
\end{defn}

\begin{defn}\label{dhtr}
For any \w{M\in\sMR} and effective epimorphism \w{\phi:\bX\efpic\UM}
we define \w{\hQ{\phi}} by the pullback square in \w[:]{\Sa}

\mydiagram[\label{eqmsmallestpb}]{
\hQ{\phi}\ar[rr]\sp<<<<<<<<<<<{(\hxi{\Phi})}\ar[d]\sb{\home{\phi}} &&
\prod'\sb{\textstyle{\mbox{\small $(j:M\hra M')\in\sMR$}}}\ \
\prod\sb{\textstyle{\mbox{\small $\Phi\in\hHom(\bX,PM')\sb{j\phi}$}}}
\ \UPM' \ar@<21ex>[d]\sb<<<<{\prod' p\sb{M'}}\\
\UM\ar[rr]\sp<<<<<<<<<<{(j)} &&
\prod'\sb{\textstyle{\mbox{\small $(j:M\hra M')\in\sMR$}}}\ \
\prod\sb{\textstyle{\mbox{\small $\Phi\in\hHom(\bX,PM')\sb{j\phi}$}}}\ \UM',
}
\noindent where \w{\stackrel{~'}{\prod}} indicates that empty factors are to
be omitted from the product (so the limit is in fact taken over a small diagram).

Note that if \w{j:M\hra C\UM\cong\UM'} is the inclusion into the cone,
\w{j\circ\phi} is nullhomotopic, so the upper right hand corner of
\wref[,]{eqmsmallestpb} and thus \w[,]{\hQ{\phi}} are never empty.

We now define \w{\cTR:\Sa\to\Sa} by
\begin{myeq}\label{eqmdsconst}
\cTR\bX~:=~\prod'\sb{M\in\sMR}~~
\prod\sb{\textstyle{\mbox{\scriptsize $\phi:\bX\hefpic M$}}}~\hQ{\phi}~.
\end{myeq}

Since there is only a \emph{set} of choices of $\UM$ and $\phi$ for which
\w[,]{\hQ{\phi}\neq\emptyset} the products in \wref{eqmdsconst} are in fact taken
over a set, rather than a proper class, of indices.
\end{defn}

\begin{lemma}\label{lfunctor}
The construction \w{\cTR} is functorial in \w[.]{f:\bX\to\bY}
\end{lemma}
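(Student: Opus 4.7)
The plan is to define $\cTR(f) : \cTR(\bX) \to \cTR(\bY)$ one factor at a time, using the universal property of each pullback \eqref{eqmsmallestpb} defining $\hQ{\psi}$ for $\psi : \bY \hefpic N$ in the product for $\cTR(\bY)$.

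Given $\psi : \bY \hefpic N$, I first factor the composite $\psi f : \bX \to N$ through its effective image: set $M_f := \hIm(\psi f)$ so that $\psi f = j_f \phi_f$ with $\phi_f : \bX \hefpic M_f$ an effective epimorphism and $j_f : M_f \hra N$ an inclusion in $\sMR$. Then $(M_f, \phi_f)$ indexes a factor in \eqref{eqmdsconst} for $\cTR(\bX)$, giving a canonical projection $\cTR(\bX) \to \hQ{\phi_f}$.

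Next I invoke the universal property of \eqref{eqmsmallestpb} for $\psi$. The base leg $\cTR(\bX) \to N$ is the composite $\cTR(\bX) \to \hQ{\phi_f} \xra{\home{\phi_f}} M_f \xra{j_f} N$. For an inclusion $k : N \hra N'$ in $\sMR$ and an effectively surjective nullhomotopy $\Psi \in \hHom(\bY, PN')_{k\psi}$, the precomposite $\Psi f : \bX \to PN'$ is a nullhomotopy of $(k j_f)\phi_f$, but it is generally \emph{not} effectively surjective. Applying \eqref{eqesnull}, let $N''_\Psi := \hIm(\widetilde{\Psi f})$; this yields a factorization $\Psi f = Pj'' \circ \Psi'$ where $\Psi' \in \hHom(\bX, PN''_\Psi)_{j' \phi_f}$ is effectively surjective, $j' : M_f \hra N''_\Psi$, and $j'' : N''_\Psi \hra N'$. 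The pair $(j', \Psi')$ is then an index in the pullback defining $\hQ{\phi_f}$, so I define the corresponding component $\cTR(\bX) \to PN'$ by $\cTR(\bX) \to \hQ{\phi_f} \to PN''_\Psi \xhra{Pj''} PN'$.

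The pullback compatibility for $\hQ{\psi}$ then follows from the analogous compatibility already present in $\hQ{\phi_f}$: both routes from $\cTR(\bX)$ to $N'$ factor through $M_f$ and coincide there, since the inclusion $M_f \hra N'$ is uniquely determined as a subobject, so that $k j_f = j'' j'$. Assembling these component maps over all $(N, \psi)$ produces $\cTR(f)$. For $f = \Id_{\bX}$ the construction returns the identity on each factor; functoriality $\cTR(g f) = \cTR(g) \cTR(f)$ follows by observing that $\hIm(\psi g f) = \hIm(\phi_{\psi,g} \circ f)$ as a subobject of $\hIm(\psi g)$, where $\phi_{\psi,g}$ is the effective quotient produced by $\cTR(g)$, together with the analogous statement for the replacement of nullhomotopies. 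The main technical obstacle is this reindexing procedure: neither effective epimorphisms with fixed target nor effectively surjective nullhomotopies with fixed target are preserved under precomposition by $f$. The apparatus of \S\ref{dsrmod}, specifically \eqref{eqesnull}, supplies the canonical replacements $M_f$ and $N''_\Psi$, and once these are fixed the remaining verification of pullback compatibility and functoriality is a diagram chase in $\sMR$.
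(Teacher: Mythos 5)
Your construction is correct and is essentially the paper's own proof: you factor $\psi f$ through its effective image to land in a factor $\hQ{\phi_f}$ of $\cTR\bX$, replace each effectively surjective nullhomotopy $\Psi$ of $k\psi$ by the effectively surjective nullhomotopy of $j'\phi_f$ it induces via \wref[,]{eqesnull} and check compatibility through the resulting parallelogram of inclusions in \w[,]{\sMR} exactly as in diagram \wref{eqcompmod} of the paper. Your added remarks on identities and composites are consistent with (and slightly more explicit than) what the paper records.
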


\begin{proof}
Let \w{\psi:\bY\efpic\UM} be an effective epimorphism, \w{j:\UM\hra\UM'} an
inclusion in \w[,]{\sMR} and \w{\Psi:\bY\to\UPM'} an effectively
surjective nullhomotopy of \w[.]{j\circ\psi:\bY\to\UM'}

Set \w{\UM'':=\hIm(\psi\circ f)} and
\w[,]{\UM''':=\hhIm(\Psi\circ f)} as in \wref[,]{eqhhimage}
with \w{j'':\UM''\hra\UM} and \w{j':\UM'''\hra\UM'} the
inclusions. As in \wref{eqesnull} we also have an inclusion
\w[,]{j''':\UM''\hra\UM'''} fitting into the diagram:
\mydiagram[\label{eqcompmod}]{
\bY \ar@/^{3.5pc}/[rrrrrrrr]\sp{\Psi} \ar@{->>|}[rrrrrdd]\sb{\psi} &&
\bX \ar[ll]\sb<<<<<{f} \ar@{->>|}[rrd]\sb{\phi} \ar[rrrr]\sp{\Phi}
&&&& \UPM''' \ar[d]\sp{p\sb{M'''}} \ar@{^{(}->}[rr]\sp{Pj'}&&
\UPM' \ar[dd]\sp{p\sb{M'}} \\
&&&& \UM'' \ar@{^{(}->}[rd]\sp{j''} \ar@{^{(}->}[rr]\sp{j'''} &&
\UM''' \ar@{^{(}->}[rrd]\sp{j'} && \\
&&&&& M \ar@{^{(}->}[rrr]\sp{j} &&& \UM'
}
\noindent The map \w{\cTR f:\cTR\bX\to\cTR\bY} is defined into the factor
\w{\hQ{\psi}} of \w{\cTR\bY} by projecting from \w{\cTR\bX} onto
\w{\hQ{\phi}} and then onto the copy of \w{\UPM'''} indexed by $\Phi$,
which then maps by \w{Pj'} to the copy of \w{\UPM'} in \w{\hQ{\psi}} indexed by
$\Psi$. The copy of \w{\UM''} in the lower left corner of \wref{eqmsmallestpb}
for \w{\hQ{\phi}} maps by \w{j''} to the corresponding copy of $\UM$ for
\w[.]{\hQ{\psi}} The commutativity of the lower right hand parallelogram in
\wref{eqcompmod} ensures that this induces a well-defined map on the limits
\wref{eqmsmallestpb} and \wref[.]{eqmdsconst}
\end{proof}

\begin{defn}\label{dblambda}
For any \w{\bX\in\Sa} we define the cardinal \w{\lambda\sb{\bX}} to be
$$
\lambda\sb{\bX}~:=~
\sup\sb{M\in\sMR}\
\{|\hIm(\phi)|~|\ \phi:\bX\to \UM\}~\cup~\{|\hhIm(\Phi)|~|~\Phi:\bX\to \UPM\}~.
$$
\end{defn}

\begin{prop}\label{pstabilize}
For any \w{\bX\in\Sa} and \w{\lambda\geq\lambda\sb{\bX}} we have a canonical
isomorphism \w[.]{\cTR\bX\cong\hTR{\lambda}\bX}
\end{prop}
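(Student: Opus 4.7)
The plan is to unfold both sides of the claimed isomorphism using the explicit descriptions set up in Remark~\ref{rsopposite} and Definition~\ref{dhtr}, and to identify them via the \emph{initial factorization} of every map $\phi\colon\bX\to\UM$ through $\hIm(\phi)$ (and of every nullhomotopy through its $\hhIm$). First I would recall that $\hTR{\lambda}\bX=\RR{\lambda}\LR{\lambda}\bX$ is the limit in $\Sa$ of the diagram whose nodes are pairs $(M,\phi)$ with $M\in(\sMR)_{\lambda}$ and $\phi\in\LR{\lambda}\bX\lin{M}=\map_{\Sa}(\bX,\UM)_{0}$, whose inner pullback corners are indexed by nullhomotopies $\Phi\in\map_{\Sa}(\bX,\UPM)_{0}$, and whose arrows come from inclusions $j\colon M_{0}\hra M_{1}$ in $(\sMR)_{\lambda}$, exactly as in \wref{eqcorncorn}.

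Next I would identify the \emph{initial nodes} of this diagram: those $(M,\phi)$ for which $\phi\colon\bX\efpic\UM$ is an effective epimorphism, and within each such node the effectively surjective nullhomotopies $\Phi\in\hHom(\bX,\UPM')_{j\phi}$ of Definition~\ref{dsrmod}. A general $\phi$ factors as $\bX\efpic\hIm(\phi)\hra\UM$, giving an arrow in the indexing diagram from the initial node $(\hIm(\phi),\phi')$ to $(M,\phi)$; hence the projection of the limit to the corner at $(M,\phi)$ is already determined by its projection at $(\hIm(\phi),\phi')$. A parallel reduction, through $\hhIm(\Phi)=P(\hIm(\widetilde\Phi))$ and diagram \wref{eqesnull}, replaces an arbitrary nullhomotopy by an effectively surjective one. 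By Definition~\ref{dblambda}, both $|\hIm(\phi)|$ and $|\hhIm(\Phi)|$ are bounded by $\lambda_{\bX}\leq\lambda$, so all the initial nodes, and all the effectively surjective nullhomotopies out of $\bX$, already lie inside $(\sMR)_{\lambda}$, and nothing is lost when one restricts the products in $\cTR\bX$ from $\sMR$ down to $(\sMR)_{\lambda}$.

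Putting the two reductions together, the limit defining $\hTR{\lambda}\bX$ collapses to a product over effective epimorphisms $\phi\colon\bX\efpic\UM$ of inner pullback squares indexed by inclusions $j\colon\UM\hra\UM'$ in $\sMR$ and effectively surjective nullhomotopies $\Phi\in\hHom(\bX,\UPM')_{j\phi}$. Comparison with \wref{eqmsmallestpb} shows that this product is precisely $\prod'_{\phi}\hQ{\phi}=\cTR\bX$ as in \wref{eqmdsconst}, yielding the desired canonical isomorphism; naturality in $\bX$ and compatibility with the monad structure follow because both sides arise from the same universal property of the path fibrations $\UPM'\to\UM'$.

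The main obstacle will be the bookkeeping of the second step: one has to verify that the ``double image'' $\hhIm(\Phi)=P(\hIm(\widetilde\Phi))$ of Definition~\ref{dsrmod} really captures the initial factor of the inner corner \wref{eqcorncorn}, so that the pullback squares building the $\hQ{\phi}$ reproduce exactly the inner pullbacks inside $\RR{\lambda}\LR{\lambda}\bX$. In particular one must separately treat the nodes where $\phi\notin\Image(\LR{\lambda}\bX\lin{q_{M}})$ (which contribute a bare copy of $\UM$) and those where $\phi\in\Image(\LR{\lambda}\bX\lin{q_{M}})$, and argue that after initial factorization both are already accounted for by factors of the form $\hQ{\phi'}$ indexed by effective epimorphisms.
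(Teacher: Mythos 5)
Your proposal is correct and follows essentially the same route as the paper's proof: decompose the limit defining $\hTR{\lambda}\bX$ into connected components whose initial nodes are the effective epimorphisms $\phi:\bX\efpic\hIm(\phi)$, reduce the nullhomotopy indices to effectively surjective ones via $\hhIm(\Phi)$, and use $\lambda\geq\lambda_{\bX}$ to see the resulting cofinal diagram is exactly the one defining $\cTR\bX$. The bookkeeping obstacle you flag is precisely what the paper resolves by splitting the nullhomotopies of $j\circ\phi$ into those pulled back along $j$ and the ``new'' ones, and then absorbing the non-effectively-surjective ones by factoring through intermediate submodules $M_{0}\subseteq M_{2}\subseteq M_{1}$.
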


\begin{proof}
By construction, \w{\hTR{\lambda}\bX} is obtained
from the \dRma \w{\fX:=\rho\fMF\bX} by taking the limit over all maps
\w{\phi:\bX\to M\sb{0}} for \w{M\sb{0}\in(\sMR)\sb{\lambda}} of the diagram
described in \S \ref{rsopposite} (for various inclusions
\w[),]{j\sb{0}:M\sb{0}\hra M\sb{1}} where if the copy of \w{M\sb{0}} in the diagram is
indexed by \w[,]{\phi\sb{0}} then the two copies of \w{M\sb{1}} are indexed by
\w[.]{\phi\sb{1}:=j\sb{0}\circ\phi\sb{0}}

This limit therefore splits up into connected components, one for each
effective epimorphism \w[,]{\phi:\bX\efpic M\sb{0}} with all other simplicial
$R$-modules \w{M\sb{1}} in that component indexed by \w{j\circ\phi} for various
inclusions \w[.]{j:M\sb{0}\hra M\sb{1}} Thus \w{M\sb{0}} is initial among all
the simplicial modules \w{M\sb{1}} in its component of the diagram.

Moreover, the following diagram is cofinal in \wref[:]{eqcorncorn}
\mydiagram[\label{eqnewcorncorn}]{
\prod\sb{(\fX\lin{q\sb{M\sb{0}}})^{-1}(\phi)}PM\sb{0}
\ar[rd]\sb<<<<<<<<<<{\prod jp\sb{M\sb{0}}}
\ar[r]\sp{\prod Pj}&
\hspace*{3mm}\prod\sb{(\fX\lin{q\sb{M\sb{0}}})^{-1}(\phi)}PM\sb{1}
\ar[d]\sp<<<<{\prod p\sb{M\sb{1}}} &&
\prod\sb{(\fX\lin{q\sb{M\sb{1}}})^{-1}(j\phi)}PM\sb{1}
\ar[ll]\sb<<<<<{\prod(\fX\lin{j})\sp{\ast}}
\ar[d]\sp{\prod p\sb{M\sb{1}}} \\
M\sb{0} \ar[r]\sb<<<<<<<<<{(j)} \ar@/_{3.5pc}/[rrr]\sb{(j)} &
\prod\sb{(\fX\lin{q\sb{M\sb{0}}})^{-1}(\phi)}M\sb{1} &&
\prod\sb{(\fX\lin{q\sb{M\sb{1}}})^{-1}(j\phi)}M\sb{1}
\ar[ll]\sp{\prod(\fX\lin{j})\sp{\ast}}
}
\noindent Note that since \w{\phi:\bX\efpic M\sb{0}} is an effective epimorphism,
any nullhomotopy \w{\Phi:\bX\to PM\sb{0}} must be effectively surjective (see
\wref[),]{eqesnull} so we may replace the index set
\w{(\fX\lin{q\sb{M\sb{0}}})^{-1}(\phi)} by \w[.]{\hHom(\bX,PM\sb{0})\sb{\phi}}

Moreover, the map \w{\prod(\fX\lin{j})\sp{\ast}} into the lower middle product
(of copies of \w[,]{M\sb{1}} indexed again by \w[)]{\hHom(\bX,PM\sb{0})\sb{\phi}}
is simply the projection onto those factors of
\w{\prod\sb{\Psi\in(\fX\lin{q\sb{M\sb{1}}})^{-1}(j\phi)}M\sb{1}} indexed by
nullhomotopies \w{\Psi} for \w{j\circ\phi} which are \emph{not} in the image of
\w[.]{j\sb{\ast}:\Hom(\bX,PM\sb{0})\hra\Hom(\bX,PM\sb{1})}

Thus we may decompose this index set as a disjoint union
$$
(\fX\lin{q\sb{M\sb{1}}})^{-1}(j\phi)~=~
j\sb{\ast}\left((\fX\lin{q\sb{M\sb{0}}})^{-1}(\phi)\right)~\amalg~\New\sp{0}\sb{1}~,
$$
\noindent where \w{\New\sp{0}\sb{1}} consists of those nullhomotopies
\w{\Psi:\bX\to PM\sb{1}} of \w{j\circ\phi} which do not come from nullhomotopies
of $\phi$ itself.

The factors in the right hand products in \wref{eqnewcorncorn} indexed by
\w{\New\sp{0}\sb{1}} therefore fit into a cospan of the form

\mydiagram[\label{eqnewrightcorn}]{
M\sb{0} \ar[rr]\sp{(j)} && \prod\sb{\New\sp{0}\sb{1}}~M\sb{1}~&&
\prod\sb{\New\sp{0}\sb{1}}~PM\sb{1}~, \ar[ll]\sb{\prod p\sb{M\sb{1}}}
}
\noindent while the remainder of \wref{eqnewcorncorn} becomes
\mydiagram[\label{eqnewleftcorn}]{
M\sb{0} \ar[rr]\sp<<<<<<<<{\diag}&& \prod\sb{\hHom(\bX,PM\sb{0})\sb{\phi}}~M\sb{0}~&&
\prod\sb{\hHom(\bX,PM\sb{0})\sb{\phi}}~PM\sb{0}~, \ar[ll]\sb{\prod p\sb{M\sb{0}}}
}
\noindent where if \w{\hHom(\bX,PM\sb{0})\sb{\phi}} is empty (that is, $\phi$ itself
is not nullhomotopic), we replace \wref{eqnewleftcorn} by a single copy of
\w{M\sb{0}} mapping diagonally to \w[,]{\prod\sb{\New\sp{0}\sb{1}}~M\sb{1}}
by \S \ref{rsopposite}(b).

Note that any nullhomotopy \w{\Phi:\bX\to PM\sb{1}} of \w{j\circ\phi} which is not
effectively surjective in the sense of Definition \ref{dsrmod} necessarily factors
through \w{\bX~\xra{\Phi'}~PM\sb{2}~\xhra{Pj'}~PM\sb{1}} for some
\w{M\sb{0}\subseteq M\sb{2}\subseteq M\sb{1}} in \w[,]{(\sMR)\sb{\lambda}}
as in \wref[.]{eqesnull}

Therefore, if we replicate Diagram \wref{eqnewcorncorn} for the inclusion
\w[,]{j':M\sb{2}\hra M\sb{1}} again all such nullhomotopies $\Phi$ will be
omitted from the new set \w[,]{\New\sp{2}\sb{1}} so in the full diagram for
all of \w{(\sMR)\sb{\lambda}} the corner \wref{eqnewrightcorn} for \w{M\sb{1}}
is replaced by
\mydiagram[\label{eqnewrerightcorn}]{
M\sb{0} \ar[rr]\sp<<<<<<<<{(j)} && \prod\sb{\hHom(\bX,PM\sb{1})\sb{\phi}}~M\sb{1}~
&& \prod\sb{\hHom(\bX,PM\sb{1})\sb{\phi}}~PM\sb{1}~,\ar[ll]\sb{\prod p\sb{M\sb{1}}}
}
\noindent as in \wref[.]{eqmsmallestpb}

Thus whenever \w[,]{\lambda>\lambda\sb{\bX}} we have a cofinal diagram defining
\w{\hTR{\lambda}\bX} in which only those \w{M\in(\sMR)\sb{\lambda}} appear for
which there is either an effective epimorphism \w[,]{\bX\efpic M} or there is
an effectively surjective nullhomotopy \w{\bX\to\UPM} for some map
\w[:]{\bX\to M} that is, in fact we take only \w[,]{M\in(\sMR)\sb{\lambda\sb{\bX}}}
by Definition \ref{dblambda}.  This shows that in fact the natural map
\w{\cTR\bX\to\hTR{\lambda}\bX} is an isomorphism.
\end{proof}

\begin{notation}\label{dbiglambda}
For any commutative ring $R$ and \w[,]{\bY\in\Sa} let
$$
\widehat{\lambda}\sb{\bY}~:=~\sup\{\lambda\sb{\cTR\sp{n}\bY}~:\ n\in\NN\}~,
$$
\noindent let \w{\F\sb{\bY}} be the collection of $\Omega$-spectra whose
\ww{\Omega\sp{\infty}}-spaces are the objects of
\w[,]{(\sMR)\sb{\widehat{\lambda}\sb{\bY}}} and let \w{\bY\to\hWu} denote the
coaugmented cosimplicial space
\w{\bY\to\hWu\sb{\widehat{\lambda}\sb{\bY}}} of Corollary \ref{ccosimpres}.
\end{notation}

\begin{prop}\label{pcommon}
Given a commutative ring $R$, let \w{\G} denote the class of all simplicial
$R$-modules in \w[.]{\ho\Sa} For any \w[,]{\bY\in\Sa} \w{\bY\to\hWu}
is a weak $\G$-resolution of $\bY$ (cf.\ \cite[Definition 6.1]{BousC}).
\end{prop}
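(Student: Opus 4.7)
The plan is to verify the two requirements of Definition \ref{dwgf}: that \w{\hWu} is weakly \w{\G}-fibrant, and that it is \w{\G}-equivalent to \w[.]{\cu{\bY}} The weak \w{\G}-fibrancy is essentially formal: by construction (Definition \ref{dhtr} and Proposition \ref{pcosimpres}), each \w{\hW{n}} arises as a small limit in \w[,]{\sMR} and so is itself a simplicial \w{R}-module lying in \w[;]{\G} Reedy fibrancy of \w{\hWu} follows from the pullback/product structure of \w{\cTR} together with the fibrancy of simplicial \w{R}-modules in \w[.]{\Sa}

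For the \w{\G}-equivalence, I must show that for each simplicial \w{R}-module \w[,]{G} the induced map \w{[\hWu,G]\to[\cu{\bY},G]} is a weak equivalence in \w[;]{s\Grp} equivalently, the augmented simplicial abelian group \w{[\hWu,G]\to[\bY,G]} must be acyclic. Corollary \ref{ccosimpres} furnishes this precisely when \w[,]{G\in\hF\sb{\bY}=(\sMR)\sb{\widehat{\lambda}\sb{\bY}}} so the core task is to bootstrap from these ``small'' modules to an arbitrary simplicial \w{R}-module.

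The crucial input is Proposition \ref{pstabilize}, iterated: this identifies \w{\hW{n}\cong\cTR\sp{n+1}\bY} and, combined with the definition \w[,]{\widehat{\lambda}\sb{\bY}=\sup\{\lambda\sb{\cTR\sp{n}\bY}:n\in\NN\}} ensures that every map \w{\phi:\hW{n}\to G} has \w{\hIm(\phi)\in\hF\sb{\bY}} and every nullhomotopy \w{\Phi:\hW{n}\to PG} has \w[.]{\hhIm(\Phi)\in\hF\sb{\bY}} Consequently, \w{[\hW{n},G]} is the filtered colimit of \w{[\hW{n},M]} as \w{M} ranges over the simplicial \w{R}-submodules of \w{G} lying in \w[,]{\hF\sb{\bY}} compatibly with the cosimplicial and augmentation structure. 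Since \w{\pi\sb{\ast}} of simplicial abelian groups commutes with filtered colimits, the acyclicity supplied by Corollary \ref{ccosimpres} for each such \w{M} propagates to \w[,]{[\hWu,G]\to[\bY,G]} yielding the desired \w{\G}-equivalence.

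The main obstacle is justifying the filtered-colimit description compatibly with \emph{all} higher homotopies, not just on \w[.]{\pi\sb{0}} This is precisely where the inclusion of \w{\hhIm(\Phi)} in Definition \ref{dblambda}, and the supremum over all iterates \w{\cTR\sp{n}\bY} in the definition of \w[,]{\widehat{\lambda}\sb{\bY}} become essential: they ensure that homotopies between maps into \w{G} also factor through some \w[,]{M\in\hF\sb{\bY}} so that no information about the simplicial structure of \w{[\hWu,G]} escapes the small indexing system.
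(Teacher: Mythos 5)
Your argument is correct, but it reaches the conclusion by a genuinely different mechanism than the paper's proof. The paper fixes the module $M\in\G$ first and then enlarges the construction parameter: for $\kappa>|M|$ with $\kappa\geq\widehat{\lambda}\sb{\bY}$, Proposition \ref{pcosimpres} gives acyclicity of $[\hWu\sb{\kappa},M]\to[\bY,M]$ outright (since $M\in(\sMR)\sb{\kappa}$), and Proposition \ref{pstabilize} then identifies $\hWu\sb{\kappa}\cong\hWu$, so the statement is about the fixed $\hWu$ after all. You instead keep $\hWu=\hWu\sb{\widehat{\lambda}\sb{\bY}}$ fixed and filter the \emph{target}: every map $\phi:\hW{n}\to G$ factors through the small submodule $\hIm(\phi)$ and every nullhomotopy $\Phi$ through $\hhIm(\Phi)$, so $[\hW{n},G]\cong\colim\sb{M}[\hW{n},M]$ over the directed poset of submodules $M\subseteq G$ of cardinality $\leq\widehat{\lambda}\sb{\bY}$, and acyclicity passes to the filtered colimit since homotopy of simplicial abelian groups commutes with such colimits. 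Both arguments rest on the same cardinality bookkeeping (the identification $\hW{n}\cong\cTR\sp{n+1}\bY$ and the definition of $\widehat{\lambda}\sb{\bY}$); the paper's version is shorter because it reuses Proposition \ref{pcosimpres} wholesale and never has to establish the colimit description of $[\hW{n},G]$, whereas yours re-does by hand a fragment of the cofinality analysis underlying Proposition \ref{pstabilize} --- in exchange, your version makes explicit precisely which smallness properties of $\hIm$ and $\hhIm$ are being consumed.

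Two minor points. First, your concern about ``all higher homotopies'' is overstated: since $[\hW{n},G]=\pi\sb{0}\mapa(\hW{n},G)$, only homotopy classes of maps and single nullhomotopies (exactly what $\hIm$ and $\hhIm$ control) are needed, not the full mapping space. Second, Reedy fibrancy of $\hWu$ does not follow merely from each $\hW{n}$ being fibrant; the correct reason is the standard fact that a cosimplicial object in simplicial groups has surjective (hence fibrant) matching maps (cf.\ \cite[X, \S 4]{BKanH}) --- though the paper's own proof does not address this requirement of Definition \ref{dwgf} either.
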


\begin{proof}
Given any simplicial $R$-module \w[,]{M\in\G} then as long as \w[,]{\kappa>|M|}
\w{[\bY,M]\leftarrow[\hWu\sb{\kappa},M]} is an acyclic augmented simplicial
abelian group, and each \w{\hW{n}\sb{\kappa}} is in  $\G$,
by Proposition \ref{pcosimpres}.  However, if
\w[,]{\kappa\geq\widehat{\lambda}\sb{\bY}} then \w{\hW{n}\sb{\kappa}}
is naturally isomorphic to \w{\cTR\sp{n+1}\bY}
for every \w[,]{n\geq 0} by Proposition \ref{pstabilize}, so in particular,
\w[.]{\hWu\cong\hWu\sb{\kappa}}
\end{proof}

\begin{corollary}\label{ccommon}
For $\G$ and \w{\bY\to\hWu} as above, \w{\uTot\hWu} (cf.\ \S \ref{dtotal})
is weakly equivalent to the $R$-completion \w{R\sb{\infty}\bY} of $\bY$.
\end{corollary}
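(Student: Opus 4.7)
The plan is to deduce this essentially immediately from Proposition \ref{pcommon} by invoking Bousfield's machinery for resolution model categories, exactly as was done at the end of the proof of Theorem \ref{tresm}.

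First I would note that the only difference between the situation here and that of Theorem \ref{tresm} is that the coefficient object is not restricted to a field: we now take $\G$ to be the full class of simplicial $R$-modules in $\ho\Sa$ for a general commutative ring $R$. The point of the modified dual Stover construction \w{\cTR} in \S \ref{smdsc}-\S \ref{dbiglambda} is precisely to produce a \emph{set}-sized resolution whose totalization still sees all of $\G$, and this is exactly what Proposition \ref{pcommon} delivers: the coaugmented cosimplicial space \w{\bY\to\hWu} is a weak $\G$-resolution in the sense of \S \ref{dwgf}.

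Given that, the proof is just a direct citation of Bousfield's theory. Since \w{\hWu} is weakly $\G$-fibrant and $\G$-equivalent to \w[,]{\cu{\bY}} by \cite[\S 7.7]{BousC} the totalization \w{\uTot\hWu} computes the $\G$-nilpotent completion of $\bY$ (in Bousfield's terminology), and by \cite[Theorem 9.5]{BousC} this $\G$-nilpotent completion agrees with the classical Bousfield-Kan $R$-completion \w{R\sb{\infty}\bY} when $\G$ is the class of simplicial $R$-modules. Combining these gives the required weak equivalence \w[.]{\uTot\hWu\simeq R\sb{\infty}\bY}

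There is no real obstacle here; all of the hard work has already been absorbed into Proposition \ref{pstabilize} (which guarantees that iterating \w{\cTR} stabilizes at the cardinal \w[)]{\widehat{\lambda}\sb{\bY}} and Proposition \ref{pcommon} (which converts this stabilization into a weak $\G$-resolution). The only mild subtlety to keep in mind is that Bousfield's identification of the $\G$-completion with \w{R\sb{\infty}\bY} requires $\G$ to be exactly the class of simplicial $R$-modules, which is why it is essential that Notation \ref{dbiglambda} picks \w{\widehat{\lambda}\sb{\bY}} large enough for every simplicial $R$-module mapped into from any \w{\cTR\sp{n}\bY} to already lie in \w[;]{(\sMR)\sb{\widehat{\lambda}\sb{\bY}}} this is what allows the cofinality argument of Proposition \ref{pstabilize} to apply at every cosimplicial level and makes the resolution genuinely $\G$-local rather than only local for a bounded cardinality sub-collection.
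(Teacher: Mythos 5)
Your proposal is correct and takes essentially the same route as the paper: Proposition \ref{pcommon} shows that \w{\hWu} is a weak $\G$-resolution of $\bY$ (hence a $\G$-complete expansion in the sense of \cite[Definition 9.4]{BousC}), and the two Bousfield citations then finish the argument. The only slip is that you have the roles of the two references reversed \wh it is \cite[Theorem 9.5]{BousC} that identifies \w{\uTot\hWu} with the $\G$-completion \w[,]{\LG\bY} and \cite[\S 7.7]{BousC} that identifies \w{\LG\bY} with \w[.]{R\sb{\infty}\bY}
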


\begin{proof}
The cosimplicial space \w{\hWu} is a weak $\G$-resolution of $\bY$, so in
particular it is a $\G$-complete expansion (cf.\ \cite[Definition 9.4]{BousC}),
and thus \w{\uTot\hWu\simeq\LG\bY} by \cite[Theorem 9.5]{BousC}, while
\w{\LG\bY\simeq R\sb{\infty}\bY} by \cite[\S 7.7]{BousC}.
\end{proof}

We thus obtain the following generalization of Theorem \ref{tresm}:

\begin{thm}\label{trecogms}
Let \w{\eA=(\uA{n})\sb{n=0}\sp{\infty}} be an $\Omega$-spectrum model of a
connective ring spectrum, with \w[.]{R:=\pi_{0}\eA} Assume that \w{\bY\in\Sa}
is $R$-good, and let $\fX$ be a \Fma structure on \w[,]{X=\map(\bY,\uA{0})}
for \w[.]{\F=\F\sb{\bY}} If \w{\hWu} is the cosimplicial space obtained from
$\fX$ as in \S \ref{dbiglambda}, then \w[.]{X\simeq\map(\uTot\hWu,\uA{0})}
\end{thm}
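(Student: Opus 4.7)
The plan is to mirror the structure of the proof of Theorem \ref{tresm}, replacing the field-specific argument there with the modified dual Stover construction of this section and invoking Corollary \ref{ccommon} in place of the $R$-completion step used previously. Throughout, let $\G$ denote the class of all simplicial $R$-modules in $\ho\Sa$, as in Proposition \ref{pcommon}.

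First, I would observe that since $\fX$ is given as a mapping algebra structure on $X=\map(\bY,\uA{0})$, it is realizable, so the analog of Example \ref{egcoalg} (compare Remark \ref{rlimits}) endows $\rho\fX$ with a canonical $\SR{\widehat{\lambda}_{\bY}}$-coalgebra structure. Applying Corollary \ref{ccosimpres} then produces the coaugmented cosimplicial space $\bY\to\hWu$, which by Proposition \ref{pcommon} is a weak $\G$-resolution of $\bY$, and by Corollary \ref{ccommon} satisfies $\uTot\hWu\simeq R_{\infty}\bY$. Since $\bY$ is $R$-good, the coaugmentation $\bY\to\uTot\hWu$ is an $R$-homology equivalence into an $R$-complete target. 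The final step is then to invoke Bousfield's Proposition~8.5 from \cite{BousC} to conclude that the induced map fits into a weak equivalence $\map(\uTot\hWu,\uA{0})\xra{\simeq}\map(\bY,\uA{0})=X$.

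The main technical obstacle is verifying the hypothesis of Bousfield's Proposition~8.5, namely that $\uA{0}$ is $\G$-injective. Here I would use that $\eA$ is a connective ring spectrum with $\pi_{0}\eA=R$: every homotopy group $\pi_{n}\uA{0}=\pi_{n}\eA$ carries a natural $R$-module structure, so $\uA{0}$ sits atop a convergent Postnikov tower whose successive fibers are Eilenberg-Mac~Lane spaces $\KP{\pi_{n}\eA}{n}$, each of which lies in $\G$ and is therefore $\G$-injective. $\G$-injectivity is preserved under the homotopy pullbacks defining the successive stages of the tower, so one obtains $\G$-injectivity of $\uA{0}$ inductively, with the connectivity of $\eA$ guaranteeing convergence of the tower. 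Once this is established, combining it with the weak $\G$-resolution property of $\hWu$ yields the desired equivalence and completes the proof.
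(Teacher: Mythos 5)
The first two thirds of your argument coincide with the paper's: realizability of $\fX$ gives the \ww{\SR{\widehat{\lambda}\sb{\bY}}}-coalgebra structure on \w[,]{\rho\fX} Corollary \ref{ccosimpres} and Proposition \ref{pcommon} make \w{\bY\to\hWu} a weak $\G$-resolution for $\G$ the simplicial $R$-modules, and Corollary \ref{ccommon} gives \w[.]{\uTot\hWu\simeq R\sb{\infty}\bY} The gap is in the final step. To apply \cite[Proposition 8.5]{BousC} with this $\G$ you indeed need \w{\uA{0}} to be $\G$-injective, but your Postnikov-tower argument does not establish this, and the claim is not true in general. $\G$-injectivity of $Y$ asks that \w{i\sp{\ast}:[B,Y]\to[A,Y]} be onto for every $\G$-monic \w[;]{i:A\to B} in your inductive step, after extending \w{A\to P\sb{n}\to P\sb{n-1}} to some \w[,]{g:B\to P\sb{n-1}} the obstruction to lifting $g$ to \w{P\sb{n}} is the class \w{k\circ g\in[B,\KP{\pi\sb{n}\eA}{n+1}]} ($k$ the $k$-invariant), which restricts to zero on $A$ but need not vanish, since $\G$-monicity only makes \w{i\sp{\ast}} on \w{[-,\KP{\pi\sb{n}\eA}{n+1}]} surjective, not injective. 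So $\G$-injectivity is \emph{not} inherited by the homotopy fibres forming the successive Postnikov stages (nor by the inverse limit of the tower). What such a tower argument does yield is that \w{\uA{0}} is $\G$-\emph{complete} \wh a genuinely weaker property, and the one Bousfield actually uses in the proof of \cite[Theorem 9.7]{BousC}. If \w{\Omega\sp{\infty}\eA} were $\G$-injective for the Eilenberg-Mac~Lane class, $\eA$-cohomology would be controlled by $R$-cohomology in a way that fails already for \w{\eA=MU} or \w[.]{ko}

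The paper circumvents this by introducing a second class \w[,]{\G'} the \ww{\Omega\sp{\infty}}-spaces of $\eA$-module spectra, for which \w{\uA{0}=\Omega\sp{\infty}\eA} is tautologically \ww{\G'}-injective because it lies in \w[.]{\G'} One checks \w{\G\subseteq\G'} and that every \ww{\G'}-injective is $\G$-complete (as in the proof of \cite[Theorem 9.7]{BousC}), so \w{\LGp\bY\simeq\LG\bY\simeq R\sb{\infty}\bY} by \cite[Theorem 9.6 \& \S 7.7]{BousC}, and $R$-good ($=\G$-good) is equivalent to \ww{\G'}-good. Proposition 8.5 is then invoked for \w{\G'} rather than $\G$: the coaugmentation \w{\bY\to\uTot\hWu} is a \ww{\G'}-equivalence, and since \w{\uA{0}\in\G'} this yields \w[.]{\map(\bY,\uA{0})\simeq\map(\uTot\hWu,\uA{0})} You should replace your injectivity claim for \w{\uA{0}} with this comparison of the two resolution classes.
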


\begin{proof}
Let the class \w{\G'\subseteq\Obj\ho\Sa} consist of all spaces \w{\Omega\sp{\infty}N}
where $N$ is an $\eA$-module spectrum, and let \w{\G\subseteq\Obj\ho\Sa}
consist of all \w[,]{\Omega\sp{\infty}M} where $M$ is an \ww{\HR{R}}-module
spectrum. By \cite[IV, \S 2.7]{EKMMayR}, in fact all such spectra $M$ are
$R$-GEMs, so \w[.]{\G=\Obj\sMR}  Moreover, as
the proof of \cite[Theorem 9.7]{BousC}, \w{\G\subseteq\G'} and each
\ww{\G'}-injective object is $\G$-complete, so by \cite[Theorem 9.6]{BousC}
we have a natural weak equivalence \w{\LGp\bY\simeq\LG\bY} for every
\w[,]{\bY\in\Sa} and \w{\LG\bY\simeq R\sb{\infty}\bY}
by \cite[\S 7.7]{BousC}.

Therefore, \w{\uTot\hWu\simeq \LGp\bY} by Corollary \ref{ccommon}
and $\bY$ is $R$-good (that is, $\G$-good) if and only if it
is \ww{\G'}-good, by \cite[Definition 8.3]{BousC}.

Therefore, by \cite[Proposition 8.5]{BousC} \w{\bY\to\uTot\hWu} is a
\ww{\G'}-equivalence, so in particular \w{\map(\bY,\bA)\to\map(\uTot\hWu,\bA)}
is a weak equivalence.
\end{proof}

\begin{example}\label{egrgood}
Ring spectra as in Theorem \ref{trecogms} include \w{MU}
or \w[,]{BP} connective \w{ko} or \w[,]{ku} and of course the Eilenberg-Mac~Lane
spectrum \w{\HR{R}} for any commutative ring $R$.

Any simply-connected \w{\bY\in\Sa} is $R$-good.
If $R$ is a solid ring (cf.\ \cite{BKanC}) \wh e.g., \w{R\subseteq\QQ} or
\w{R=\Fp} \wwh and \w{\pi\sb{1}\bY} is $R$-perfect (that is,
\w[),]{(\pi\sb{1}\bY)\sb{\ab}\otimes R=0} then $\bY$ is $R$-good by
\cite[VII, 3.2]{BKanH}.

Note that for any commutative ring $R$ and \w[,]{\bY\in\Sa}
\w{(\core R)\sb{\infty}\bY\simeq R\sb{\infty}\bY} by \cite[I, Lemma 9.1]{BKanH},
where \w{\core R\subseteq R} is the maximal solid subring of $R$ (cf.\
\cite{BKanC}). Therefore, we can replace \w{R:=\pi\sb{0}\eA} by \w{\core R} in
the Theorem (and in the construction \w{\hWu} in \S \ref{dbiglambda}).
\end{example}

\begin{remark}\label{rrecover}
For such an $\eA$, Theorem \ref{trecogms}
provides us with a \emph{recovery} procedure for retrieving $\bY$ from the
\Fma structure \w{\fX=\fMF\bY} on \w[,]{X=\map(\bY,\bA)} for $\F$ as in
\S \ref{dsdiscrm}.
Of course, we cannot expect to recover $\bY$ up to weak equivalence, but
only to the extent that $\bY$ is determined by $X$ (namely, up to
\ww{\G'}-completion, for \w{\G'} as above).
However, it does not allow us to \emph{recognize} when an abstract \Fma
is in fact realizable.

Moreover, it implies that we can actually define a $\G$-completion for
an \emph{abstract} \Fma $\fX$: namely, \w{\LG\fX:=\uTot\hWu}
for \w{\hWu} as in \S \ref{dbiglambda}. However, without additional
assumptions, it need not be true that \w[.]{\fX\simeq\fMF(\LG\fX)}
\end{remark}

%
%
\sect{Realizing simplicial \Tal resolutions}
\label{crstr}

Our goal here is to show how a free simplicial (algebraic) resolution \w{\Vd}
of an enrichable \Tal $\Lambda$ can be realized in $\C$.  For this purpose,
we require the following:

\begin{defn}\label{dcwres}
For any $\fG$-sketch $\Theta$ (\S \ref{dfgsketch}), a \emph{CW-resolution} of a \Tal
$\Lambda$ is a cofibrant replacement \w{\vare:\Gd\xra{\simeq}c\Lambda}
(in the model category of simplicial \Tal[s] given by Proposition \ref{psimptal}),
equipped with a CW basis \w{(\oG{n})\sb{n=0}\sp{\infty}}
(as in \S \ref{dscwo}), with each \w{\oG{n}} a free \Tal[.]
\end{defn}

\begin{remark}
In fact, any CW object \w{\Gd} for which each \w{\oG{n}} is a free \Tal[,]
and each attaching map \w{\odz{G\sb{n}}} \wb{n\geq 0} surjects onto
\w[,]{Z\sb{n-1}\Gd} is a CW-resolution. Here we set \w{Z\sb{-1}\Gd:=\Lambda} and
\w[,]{\odz{G\sb{0}}:=\vare} so that
\begin{myeq}\label{eqaugmzero}
\vare\circ \odz{G\sb{1}}~=~0~.
\end{myeq}
\end{remark}

\begin{lemma}\label{lmoore}
Let \w{\Wu\in c\C} be a Reedy cofibrant cosimplicial object over a
model category $\C$ (as in \S \ref{snac}), and $\bB$ a homotopy group object
in $\C$. Then for any  Moore chain \w{\beta\in C\sb{n}[\Wu,\bB]} for the
simplicial group \w[:]{[\Wu,\bB]}
\begin{enumerate}
\renewcommand{\labelenumi}{(\alph{enumi})~}
\item $\beta$ can be realized by a map \w{b:\bW\sp{n}\to\bB} with
\w{b\circ d\sp{i}\sb{n-1}=0} for all \w[.]{1\leq i\leq n}
\item If $\beta$ is an algebraic Moore \emph{cycle}, we can choose a nullhomotopy
\w{H:\bW\sp{n-1}\to P\bB\subseteq \bB\sp{[0,1]}} for \w{b\circ \dz{n-1}} such
that \w{H\circ d\sp{j}\sb{n-2}=0} for \w[.]{1\leq j\leq n-1}
\end{enumerate}
\end{lemma}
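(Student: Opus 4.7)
The plan is to combine Reedy cofibrancy of $\Wu$ with the $H$-group structure on $\bB$, via the homotopy extension property (HEP). The technical device is a ``partial latching'' sub-object $\widetilde L^n \Wu \hookrightarrow \bW^n$, defined as the colimit over those proper injective $\theta: \bk \to \bn$ that factor through some coface $d^i$ with $i \geq 1$ --- equivalently, every proper injection except $d^0$ itself. By construction, a map $b: \bW^n \to \bB$ satisfies $b \circ d^i_{n-1} = 0$ for all $1 \leq i \leq n$ if and only if $b|_{\widetilde L^n \Wu} = 0$. Moreover, Reedy cofibrancy of $\Wu$ together with the preservation of cofibrations by $\otimes$ and $\times$ (cf.\ Notation \ref{snac}) implies that $\widetilde L^n \Wu \hookrightarrow \bW^n$ is a cofibration, realized as an intermediate stage in the cellular decomposition of $L^n \Wu \hookrightarrow \bW^n$.

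For part (a), I would pick any representative $b_0: \bW^n \to \bB$ of $\beta$ and aim to produce a nullhomotopy of $b_0|_{\widetilde L^n \Wu}$, after which HEP for the cofibration $\widetilde L^n \Wu \hookrightarrow \bW^n$ and the fibrant target $\bB$ would deliver a homotopy $b_0 \simeq b$ with $b|_{\widetilde L^n \Wu} = 0$. The nullhomotopy on $\widetilde L^n \Wu$ is built by induction over the skeletal filtration of the colimit: on the top-dimensional pieces $d^i(\bW^{n-1})$ the nullhomotopies of $b_0 \circ d^i$ exist by hypothesis, and at each lower skeletal stage the failure of two chosen nullhomotopies to agree on a common $\bW^k$-overlap represents an element of $[\bW^k, \Omega \bB]$, which can be inverted and absorbed into one of the chosen nullhomotopies using the $H$-group structure on $\bB$.

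For part (b), write $a := b \circ d^0_{n-1}$. The cosimplicial identity $d^0 \circ d^m = d^{m+1} \circ d^0$ together with $b \circ d^{m+1} = 0$ from part (a) forces $a \circ d^m = 0$ strictly for every $0 \leq m \leq n-1$, so $a$ kills all of $L^{n-1} \Wu$. Since $\beta$ is a cycle, $a$ is nullhomotopic; I pick any nullhomotopy $H_0: \bW^{n-1} \to P\bB$ of $a$. Because $\ev_1 \circ H_0 = a$ vanishes on every coface, each $H_0 \circ d^j$ lands in $\Omega \bB = \ker(\ev_1)$, giving a well-defined map $H_0|_{\widetilde L^{n-1} \Wu} : \widetilde L^{n-1} \Wu \to \Omega \bB$. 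I then apply the same partial-latching and skeletal-induction machinery as in part (a), now with target $\Omega \bB$ and on the cofibration $\widetilde L^{n-1} \Wu \hookrightarrow \bW^{n-1}$, to construct a correction $\eta: \bW^{n-1} \to \Omega \bB$ whose composites with $d^j$ (for $j = 1, \ldots, n-1$) equal $-(H_0 \circ d^j)$. Setting $H := H_0 \cdot \eta$ via the $H$-group multiplication on $P\bB$ yields $\ev_1 \circ H = a$ (since $\eta$ takes values in $\ker(\ev_1)$) and $H \circ d^j = (H_0 \circ d^j) \cdot (\eta \circ d^j) = 0$ for $1 \leq j \leq n-1$.

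The main obstacle is the inductive construction of the coherent nullhomotopy on $\widetilde L^n \Wu$ in part (a): nullhomotopies chosen separately on the various $d^i(\bW^{n-1})$ need not coincide on the iterated cofaces where distinct $d^i$ and $d^{i'}$ meet, and the patching must be arranged compatibly all the way down the skeletal filtration. The $H$-group structure on $\bB$ is essential here, since the discrepancies at each lower stage lie in $[\bW^k, \Omega \bB]$ and can be inverted, composed, and translated using the homotopy group structure so as to be absorbed into one of the chosen nullhomotopies. The same mechanism, applied one loop further down to $\Omega \bB$, handles the analogous obstruction in part (b).
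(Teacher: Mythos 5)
Your overall strategy (partial latching objects, HEP, obstruction classes in \w[)]{[-,\Omega\bB]} is a reasonable unwinding of what the lemma asserts, but the step you yourself flag as the main obstacle \wwh the coherent patching of the separately chosen nullhomotopies over \w{\widetilde{L}\sp{n}\Wu} \wwh is where all the content lies, and the mechanism you offer does not close it. When you glue the piece \w{d\sp{k}(\bW\sp{n-1})} onto the part already handled, the two nullhomotopies disagree on the overlap \w{E\subset\bW\sp{n-1}} (the sub-object generated by \w[)]{d\sp{1},\dotsc,d\sp{k-1}} by a map \w[;]{E\to\Omega\bB} to ``absorb'' this into \w{H\sb{k}} you must first \emph{extend} it over the cofibration \w[,]{E\hra\bW\sp{n-1}} and a map out of a sub-object into a merely fibrant target need not extend. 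What makes the extension possible is the cosimplicial structure you never invoke: the cofaces are split by the codegeneracies (\w[),]{s\sp{j}d\sp{j}=\Id} so a discrepancy on an overlap can be pushed forward strictly by precomposing with a suitable iterated codegeneracy \wwh this is the classical Eilenberg--Zilber-type surjectivity of partial matching maps, and without it the obstruction classes have no reason to vanish (there are also higher coherences on triple overlaps that must be organized by an induction). A second, smaller error occurs in (b): you set \w{H:=H\sb{0}\cdot\eta} and claim \w[;]{H\circ d\sp{j}=(H\sb{0}\circ d\sp{j})\cdot(\eta\circ d\sp{j})=0} for a homotopy group object the product of a map with its inverse is only \emph{nullhomotopic}, not equal to the zero map, so the strict vanishing \w{H\circ d\sp{j}\sb{n-2}=0} demanded by the lemma is not achieved (in (a) you correctly pass through HEP to convert a nullhomotopy into strict vanishing, but in (b) you do not).

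For comparison, the paper's proof dualizes and then cites: Reedy cofibrancy of \w{\Wu} makes the simplicial space \w{\Ud=\map\sb{\C}(\Wu,\bB)} Reedy fibrant, and the isomorphism \w{\pi\sb{i}C\sb{n}\Ud\cong C\sb{n}\pi\sb{i}\Ud} of \cite[X, 6.3]{BKanH} then represents an algebraic Moore chain by a point of the space-level Moore chains object \w{C\sb{n}\Ud} (giving (a)), and the vanishing of \w{\partial\sb{n}\beta} by a path in \w[,]{C\sb{n-1}\Ud} i.e.\ a nullhomotopy lying in \w{C\sb{n-1}\map\sb{\C}(\Wu,P\bB)} (giving (b)). That citation packages exactly the coherence argument you are attempting by hand; a self-contained version of your proof would need to reconstruct it via the fibration \w{C\sb{n}\Ud\to U\sb{n}\to\map\sb{\C}(\widetilde{L}\sp{n}\Wu,\bB)} together with the codegeneracy splittings, rather than relying on the $H$-group structure of $\bB$ alone.
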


\begin{proof}
Since \w{\Wu} is Reedy cofibrant, the simplicial space
\w{\Ud=\mapa(\Wu,\bB)\in s\Sa} is Reedy fibrant, so we have an isomorphism
\begin{myeq}\label{eqcommmoor}
\iota\sb{\star}~:~\pi\sb{i}C\sb{n}\Ud~\to~C\sb{n}\pi\sb{i}\Ud
\end{myeq}
\noindent (cf.\ \cite[X, 6.3]{BKanH}). Thus we can represent
\w{\alpha\in C\sb{n}\pi\sb{0}\Ud} by a map \w[,]{a\in C\sb{n}\Ud} which implies
(i)\vsm.

If $\alpha$ is a cycle, then \w{\partial\sb{n}(\alpha)=[a\circ \dz{n-1}]}
vanishes in \w[,]{\pi\sb{0}C\sb{n-1}\Ud} so we have a nullhomotopy $H$ for
\w{a\circ \dz{n-1}} in
$$
PC\sb{n-1}\mapa(\Wu,\bB)=C\sb{n-1}\mapa(\Wu,P\bB)~
\subseteq~\mapa(\bW\sp{n-1},P\bB)~,
$$
\noindent which implies (ii).
\end{proof}

The following result essentially dualizes (and extends) \cite[Theorem 3.16]{BlaCW}:

\begin{thm}\label{tres}
Assume given an enriched sketch $\bT$ in a model category $\C$ as
in \S \ref{snac}, with \w{\Theta:=\pi\sb{0}\bT} the associated algebraic
sketch, and let $\Lambda$ be a \Tal equipped with a CW-resolution \w[.]{\Vd}
If \w{\Lambda=\Lambda\sb{\fX}} is enriched by a \Tma $\fX$, then there is a
CW cosimplicial object \w{\Wu\in c\C} realizing \w{\Vd} \wwh that is, there
is an augmentation \w{\vn{\infty}:\fMT\bW\sp{0}\to\fX} such that
\w{\pi\sb{0}(\fMT\Wu)\to\pi\sb{0}\fX} is isomorphic to \w[.]{\Vd\to\Lambda}
If \w{\fX=\fMT\bY} for some \w[,]{\bY\in\C} then \w{\Vd\to\Lambda} can be
realized by a coaugmented cosimplicial object \w[.]{\bY\to\Wu}
\end{thm}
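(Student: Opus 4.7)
The plan is to build \w{\Wu\in c\C} by induction on cosimplicial degree, producing at each stage a Reedy-cofibrant truncation \w{\Wu\sp{(n)}} equipped with a weak CW-structure (Definition \ref{dccwo}) whose CW basis \w{\uW{k}} \wb{k\leq n} realizes the free \Tal \w[,]{\oV{k}} together with a \Tma augmentation \w{\vn{\infty}:\fMT\bW\sp{0}\to\fX} inducing \w{\Vd\sp{(n)}\to\Lambda} under \w[.]{\pi\sb{0}\fMT} For the base case, pick \w{\uW{0}\in\bT} a product of generators whose associated free \Tal is \w[,]{\oV{0}} and set \w[.]{\bW\sp{0}:=\uW{0}} The algebraic augmentation \w{\vare:V\sb{0}\to\Lambda} is, by Lemma \ref{lfreeta}, an element of \w[;]{\Lambda\lin{\uW{0}}} lift it to a vertex of \w{\fX\lin{\uW{0}}\sb{0}} and convert by Lemma \ref{lfreema} into the desired \w[.]{\vn{\infty}}

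For the inductive step, suppose \w{\Wu\sp{(n-1)}} has been built with \w[.]{\pi\sb{0}\fMT\Wu\sp{(n-1)}\cong\Vd\sp{(n-1)}} Choose \w{\uW{n}\in\bT} realizing \w[.]{\oV{n}} The algebraic attaching map \w{\odz{V\sb{n}}:\oV{n}\to V\sb{n-1}} factors through \w{Z\sb{n-1}\Vd} by \wref[,]{eqattach} and corresponds under Lemma \ref{lfreeta} and the inductive identification to a class \w[,]{\beta\in\pi\sb{0}\map\sb{\bT}(\bW\sp{n-1},\uW{n})} which is a Moore cycle of the simplicial group \w[.]{[\Wu\sp{(n-1)},\uW{n}]} Apply Lemma \ref{lmoore}(a) to choose a representative \w{\udz{n-1}:\bW\sp{n-1}\to\uW{n}} with \w{\udz{n-1}\circ d\sp{i}\sb{n-2}=0} for \w[,]{1\leq i\leq n-1} and (b) to obtain a nullhomotopy \w{H:\bW\sp{n-2}\to P\uW{n}} of \w{\udz{n-1}\circ d\sp{0}\sb{n-2}} satisfying \w{H\circ d\sp{j}\sb{n-3}=0} for \w[.]{1\leq j\leq n-2} Define \w{\bW\sp{n}} as a Reedy cofibrant replacement of the weak pullback exhibiting \w[;]{\uW{n}\times M\sp{n}\Wu\sp{(n-1)}} the cofaces \w{d\sp{i}\sb{n-1}} \wb{i\geq 1} are dictated by the matching-object structure of \w{\Wu\sp{(n-1)}} (so that \w[,]{\oph\sp{n}\circ d\sp{i}\sb{n-1}\sim 0} cf.\ Definition \ref{dccwo}(b)), while \w{d\sp{0}\sb{n-1}} pairs \w{\udz{n-1}} in the \w{\uW{n}} factor with the canonical map into the matching object induced by the existing \w[.]{d\sp{0}\sb{n-2}}

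The Moore-cochain vanishing of \w{\udz{n-1}} combined with the nullhomotopy \w{H} enforces the cosimplicial identities \w{d\sp{i}\sb{n-1}\circ d\sp{0}\sb{n-2}=d\sp{0}\sb{n-1}\circ d\sp{i-1}\sb{n-2}} for \w[,]{i\geq 1} so \w{\Wu\sp{(n)}} is a bona fide cosimplicial object. Since \Tma[s] preserve products (Definition \ref{dma}(a)) and coproducts of free \Tal[s] on single generators agree with \w{\pi\sb{0}\fMT} applied to products in \w[,]{\bT} we obtain \w[,]{\pi\sb{0}\fMT\bW\sp{n}\cong V\sb{n}} and by construction the induced attaching map is \w[,]{\odz{V\sb{n}}} yielding \w[.]{\pi\sb{0}\fMT\Wu\cong\Vd} In the coaugmented case \w[,]{\fX=\fMT\bY} Lemma \ref{lfreema} converts \w{\vn{\infty}} into a coaugmentation \w[,]{\bY\to\bW\sp{0}} which extends to \w{\bY\to\Wu} by naturality. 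The principal obstacle is reconciling the strict cosimplicial identities with the merely homotopy-theoretic data of the attaching classes: Lemma \ref{lmoore}(a) supplies the strict vanishing needed for the Moore cochain condition, and (b) provides the coherent nullhomotopy that makes \w{d\sp{0}\sb{n-1}} well-defined and allows the induction to propagate.
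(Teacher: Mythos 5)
There is a genuine gap. Your scheme keeps the lower cosimplicial dimensions \w{\bW\sp{k}} \wb{k<n} fixed when you adjoin the $n$-th basis object, and adds \w{\uW{n}} only in dimension $n$. But then the cosimplicial identity \w{d\sp{1}\sb{n-1}d\sp{0}\sb{n-2}=d\sp{0}\sb{n-1}d\sp{0}\sb{n-2}} cannot be satisfied strictly: projected to the new factor \w[,]{\uW{n}} its right-hand side is \w[,]{\udz{n-1}\circ d\sp{0}\sb{n-2}} which Lemma \ref{lmoore} makes only \emph{null-homotopic}, not zero, while the left-hand side is whatever you assign to \w{\oph\sp{n}\circ d\sp{1}\sb{n-1}} (zero, in the natural choice). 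The nullhomotopy $H$ does not "enforce" the identity; it has to be built \emph{into the object}. This is exactly why the paper's proof is a \emph{double} induction: at the $n$-th outer stage every \w{\bW\sp{k}} with \w{k\leq n-1} is rebuilt, with \w{\bW\sp{n-1}} acquiring a factor \w{P\uW{n}} that carries $H$ (so that \w{d\sp{1}} can be routed through the path fibration and the identity holds on the nose), and \w{\bW\sp{k}} for \w{k\leq n-2} acquiring factors \w{(\Omega\sp{n-k-1}\uW{n})\sp{\Delta[1]}} via the pullbacks \wref{eqdopb}.

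Relatedly, a single application of Lemma \ref{lmoore}(b) does not supply enough coherence. Composing the chosen nullhomotopy with \w{d\sp{0}} one dimension further down yields a self-nullhomotopy of the zero map, i.e.\ a class \w{[\ak{k-1}]} in \w{[\Wn{k-1}{n-1},\Omega\sp{n-k-1}\uW{n}]} \wwh a higher-order operation \wwh which is a fresh obstruction at each lower dimension. The paper kills it by invoking the \emph{acyclicity} of the algebraic resolution \w{\Vd} (choosing \w{\gamma\sb{k}} with \w[)]{\odz{k}(\gamma\sb{k})=[\ak{k-1}]} and, at the bottom step \w[,]{k=0} the surjectivity of \w{\vare:V\sb{0}\to\Lambda} to extend the augmentation \w{\vn{n-1}} to \w[.]{\vn{n}} Your proposal uses acyclicity only to see that the top attaching class is a Moore cycle, never revisits the augmentation after dimension $0$, and so cannot establish either the lower cosimplicial identities or condition (g) of the paper's induction. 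These descending corrections are not a technicality: they are where the higher cohomology operations of \S \ref{scrho} enter, and omitting them leaves the construction undefined below dimension \w[.]{n-1}
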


\begin{remark}\label{rlambda}
The cardinal $\lambda$ which bounds the size of the products in $\bT$ must be
chosen so that all the free \Tal[s] \w{\oV{n}} in the CW basis for \w{\Vd}
are represented by objects in $\bT$ (see \S \ref{dfralg}).
\end{remark}

\begin{proof}
We first choose once and for all objects \w{\uW{n}} in $\bT$ realizing
\w[,]{\oV{n}} in the sense of \S \ref{dfralg} \wh so
\w{\pi\sb{0}\fMT\uW{n}\cong\oV{n}} as \Tal[s.] This is possible by assumption
\ref{rlambda}.

We will construct the cosimplicial object \w{\Wu\in c\C} by a double induction:
in the outer induction, we construct a sequence of cosimplicial objects and maps:
\begin{myeq}\label{eqtower}
\dotsc~\to~\W{n}~\xra{\prn{n}}~\W{n-1}~\xra{\prn{n-1}}~\W{n-2}~\to~\dotsc~
\to~\W{0}~,
\end{myeq}
\noindent such that:

\begin{enumerate}
\renewcommand{\labelenumi}{(\alph{enumi})~}
\item \w{\Wu} is the dimensionwise limit of \wref[.]{eqtower}
\item Each cosimplicial object \w{\W{n}} is weakly $\G$-fibrant for
\w{\G:=\Obj\bT} (cf.\ \S \ref{dwgf}), as well as being cofibrant in the Reedy
model category (\S \ref{sremc}).
\item The simplicial \Tma \w{\fVnd{n}:=\fMT\W{n}} has an augmentation
\w[,]{\vn{n}:\fVn{n}{0}=\fMT\Wn{0}{n}\to\fX} which we can identify with a
$0$-simplex \w{\svn{n}} in \w{(\fX\lin{\Wn{0}{n}})\sb{0}} by Lemma \ref{lfreema}.
\item \w{\W{n}} is an $n$-coskeletal weak CW cosimplicial object
(cf.\ \S \ref{dccwo}), with CW basis \w{(\uW{k})\sb{k=0}\sp{n}}
(and zero CW basis object in dimensions $>n$).
\item The augmented simplicial \Tma \w{\fMT\W{n}} realizes \w{\Vd\to\Lambda}
through simplicial dimension $n$.
\item The maps \w{\prn{n}} restrict to a fibration weak equivalence
\w{\prnk{n}{k}:\Wn{k}{n}\to\Wn{k}{n-1}} for each \w[,]{0\leq k<n} so \w{\bW\sp{k}}
is the homotopy limit of the objects \w{\Wn{k}{n}} \wb[.]{n\geq 0}
\item The augmentation \w{\vn{n-1}:\fVnd{n-1}\to\fX} extends along the \Tma map
\w{\prn{n}\sp{\ast}:\fVnd{n-1}\to\fVnd{n}} to \w[\vsm.]{\vn{n}:\fVnd{n}\to\fX}
\end{enumerate}

\noindent\textbf{Step $\mathbf{0}$ of the outer induction\vsn.}

We start the induction with \w{\W{0}:=\cu{\uW{0}}} (the constant cosimplicial
object), which is both Reedy cofibrant and weakly $\G$-fibrant.
Note that because \w{\oV{0}} is a free \Tal[,] the \Tal augmentation
\w{\vare:\oV{0}\to\Lambda} corresponds to a unique element in
\w[,]{[\svn{0}]\in\Lambda\lin{\oV{0}}=\pi\sb{0}\fX\lin{\uW{0}}} for which we
may choose a representative \w[,]{\svn{0}\in\fX\lin{\uW{0}}\sb{0}} which
by Lemma \ref{lfreema} corresponds to a map of \Tma[s]
\w[\vsm.]{\vn{0}:\fVn{0}{0}=\fMT\uW{0}\to\fX}

\noindent\textbf{Step $\mathbf{1}$ of the outer induction\vsn.}

We choose a map \w{\udz{0}:\uW{0}\to\uW{1}} realizing the first attaching map
\w[,]{\odz{1}:\oV{1}\to V\sb{0}=\oV{0}} and define (the $1$-truncation of)
\w{\W{1}} by the diagram:
\mydiagram[\label{eqwone}]{
\Wn{0}{1} \ar@/_{1.5pc}/[d]\sb{\dz{0}} \ar@/^{1.5pc}/[d]\sp{d\sp{1}\sb{0}} &=&
\uW{0} \ar@/_{0.5pc}/[d]\sb{\dz{0}=d\sp{1}\sb{0}=\Id}
\ar[drr]\sp{\udz{0}} && \times &&
P\uW{1} \ar[dll]\sb{d\sp{1}\sb{0}=p} \ar@/^{0.5pc}/[d]\sp{\dz{0}=d\sp{1}\sb{0}=\Id} \\
\Wn{1}{1} \ar[u]\sp{s\sp{0}} &=& \uW{0}\ar@/_{0.5pc}/[u]\sb{=} &\times &
\uW{1} & \times & P\uW{1}\ar@/^{0.5pc}/[u]\sp{=}
}
\noindent Here \w{p:P\bX\to\bX} is the path fibration in $\C$, defined
as in \wref[.]{eqpathloop}

To define the augmentation \w{\svn{1}} as a $0$-simplex in
\w{(\fX\lin{\Wn{0}{1}})\sb{0}} extending
\w{\svn{0}\in\fX\lin{\Wn{0}{0}}=\fX\lin{\uW{0}}} (see (b) above), we use the
fact that
$$
\fX\lin{\Wn{0}{1}}~=~\fX\lin{\uW{0}\times P\uW{1}}~=~
\fX\lin{\uW{0}}~\times~\fX\lin{P\uW{1}}~=~
\fX\lin{\uW{0}}~\times~P\fX\lin{\uW{1}}~,
$$
\noindent by \S \ref{dma}(a)-(b), so we need only to find a $0$-simplex $H$
in \w{P\fX\lin{\uW{1}}} \wwh which, by \wref[,]{eqpathloop}
is a $1$-simplex in \w{\fX\lin{\uW{1}}} with \w[.]{d\sb{1}H=0}

In order to qualify as an augmentation \w{\fVnd{1}\to\fX} of simplicial
\Tma[s,]  \w{\vn{1}} must satisfy the simplicial identity
\begin{myeq}\label{eqaugid}
\vn{1}\circ d\sb{0}~=~\vn{1}\circ d\sb{1}~:~\fVn{1}{1}~\to~\fX
\end{myeq}
\noindent as maps of \Tma[s] \wwh or equivalently, these must correspond to
the same $0$-simplex in
$$
\fX\lin{\Wn{1}{1}}~=~\fX\lin{\uW{0}\times \uW{1}\times \uW{1}}~=~
\fX\lin{\uW{0}}\times\fX\lin{\uW{1}}\times\fX\lin{\uW{1}}~.
$$
In the first factor and third factor this obviously holds, so we need only
consider the two $0$-simplices \w[:]{\fX\lin{\uW{1}}} in other words,
since the path fibration $p$ in \wref{eqwone} becomes \w{d\sb{0}} in
the simplicial set \w{\fX\lin{\uW{1}}} (since it is induced by an inclusion
\w[),]{\Delta[0]\hra\Delta[1]} we must choose the nullhomotopy
$H$ so that \w{d\sb{0}H} is the $0$-simplex \w[.]{(\udz{0})\sb{\#}\svn{0}}

But by \wref{eqaugmzero} we know that \w{\vare\circ\odz{0}=0} in \w[,]{\TAlg}
which implies (by our choices of \w{\udz{0}} and \w{\svn{0}} representing
\w{\odz{0}} and $\vare$, respectively) that \w{(\udz{0})\sb{\#}\svn{0}}
is nullhomotopic, so we can choose an $H$ as required\vsm.

\noindent\textbf{Step $\mathbf{n}$ of the outer induction \wb[\vsn:]{n\geq 2}}

Assume given \w{\W{n-1}} satisfying (a)-(g) above, we construct an
intermediate $n$-coskeletal restricted cosimplicial object \w{\tWn{\bullet}{n}}
(cf.\ \S \ref{sscso}) by a descending induction on the cosimplicial dimension
\w[.]{0\leq k\leq n}

We require that, for each \w[,]{0\leq k<n} the object \w{\tWn{k}{n}\in\C}
is defined by the (homotopy) pullback diagram:
\mydiagram[\label{eqdopb}]{
\ar @{} [drr] |<<<<<{\framebox{\scriptsize{PB}}}
\tWn{k}{n} \ar@{->>}[d]\sb{\psn{k}{n}}\sp{\simeq} \ar[rr]\sp{\qk{k}} &&
(\Omega\sp{n-k-1}\uW{n})\sp{\Delta[1]} \ar@{->>}[d]\sp{\simeq}\sb{\ev\sb{0}} \\
\Wn{k}{n-1} \ar[rr]\sp{\etk{k}} && \Omega\sp{n-k-1}\uW{n}
}
\noindent for some map \w{\etk{k}} such that
\begin{myeq}\label{eqetak}
\etk{k}\circ d\sp{i}\sb{k-1}=0\hsp \text{for all}\hsm 1\leq i\leq k~.
\end{myeq}

The idea is that the projection of the coface map
\w{\vdz{k-1}:\Wn{k-1}{n-1}\to\tWn{k}{n}} onto \w{\Omega\sp{n-k-1}\uW{n}}
in \wref{eqdopb} describes the value \w{\ak{k-1}} of a certain ``universal
\wwb{n-k-1}-th order cohomology operation'' (see \S \ref{scrho} below),
while the projection onto \w{(\Omega\sp{n-k-1}\uW{n})\sp{\Delta[1]}}
describes a homotopy \w{\Fk{k-1}} between this value and the corresponding
element \w{\etk{k}\circ \dz{k-1}} in the cohomology of
\w[.]{\Wn{k-1}{n-1}} This higher order operation is defined by
composing the homotopy of a lower order operation with
some map (which we think of as a primary cohomology operation) \wh in this
case, \w[\vsm .]{\Fk{k}\circ \dz{k-1}}

At the $k$-th stage, we assume that we have defined \w{\tWn{i}{n}} for
\w[,]{n\geq i\geq k+1} with all coface maps
\w{d\sp{j}\sb{i}:\tWn{i}{n}\to\tWn{i+1}{n}} for all $j$ and
\w[,]{n\geq i\geq k+1} as well as \w{\vdz{k}:\Wn{k}{n-1}\to\tWn{k+1}{n}}
(if \w[),]{k<n} with the $0$-th coface map of \w{\tWn{\bullet}{n}} given by:
\begin{myeq}\label{eqtdz}
\tdz{k}~:=~\vdz{k}\circ\psn{k}{n}~:~\tWn{k}{n}~\to~\tWn{k+1}{n}~.
\end{myeq}
\noindent and \w{\dz{k}:\Wn{k}{n-1}\to\Wn{k+1}{n-1}} equal to
\w[.]{\psn{k}{n}\circ\vdz{k}}

We write:
\begin{myeq}\label{eqhk}
\Fk{k}~:=~\qk{k+1}\circ \vdz{k}:\Wn{k}{n-1}~\to~
(\Omega\sp{n-k-2}\uW{n})\sp{\Delta[1]}
\end{myeq}
\noindent for the homotopy given in the previous stage, so the map \w{\vdz{k}}
into the pullback \w{\tWn{k+1}{n}} in \wref{eqdopb} is determined by the
two compatible maps \w{\Fk{k}} and \w[.]{\dz{k}} We also set:
\begin{myeq}\label{eqak}
\ak{k}~:=~ev\sb{1}\circ \Fk{k}:\Wn{k}{n-1}~\to~\Omega\sp{n-k-2}\uW{n}
\end{myeq}
\noindent for the previous value of the corresponding higher order operation,
so
\begin{myeq}\label{eqhtpyhk}
\Fk{k}:\etk{k+1}\circ \dz{k}~\sim~\ak{k}~.
\end{myeq}

We also assume by induction that:
\begin{myeq}\label{eqhvanish}
\Fk{k}\circ d\sp{i}\sb{k-1}~=~0\hsm \text{for all}\hsm 1\leq i\leq k~.
\end{myeq}
\noindent This implies that \w{\ak{k}\circ d\sp{i}\sb{k-1}=0} for
\w[,]{i\geq 1} but in fact we require that:
\begin{myeq}\label{eqavanish}
\ak{k}\circ d\sp{i}\sb{k-1}~=~0\hsm \text{for all}\hsm 0\leq i\leq k~\vsm.
\end{myeq}

\noindent\textbf{Step $\mathbf{k=n}$ of the descending induction\vsn:}

We start the induction by setting
\begin{myeq}\label{eqninduct}
\tWn{n}{n}~:=~\Wn{n}{n-1}\times\uW{n} ~.
\end{myeq}

By assumption \w{\W{n-1}} is Reedy cofibrant, so the bisimplicial set
$$
\Ud~:=~\map_{\C}(\W{n-1},\uW{n})
$$
\noindent is Reedy fibrant. Moreover, since
\w{\pi\sb{0}\fMT\Wn{k}{n-1}\cong V\sb{k}} for all \w{0\leq k<n} by (c) above,
the algebraic attaching map \w{\odz{n}:\oV{n}\to V\sb{n-1}} is a homotopy
class
\begin{myeq}\label{eqmoorecyc}
\alpha\in\pi\sb{0}\bU\sb{n-1}~=~[\Wn{n-1}{n-1},\,\uW{n}]
~=~\pi\sb{0}(\fMT\Wn{n-1}{n-1}\lin{\uW{n}})
~=~V\sb{n-1}\lin{\uW{n}}~,
\end{myeq}
\noindent where the last equality follows from Lemma \ref{lfreeta}.
This $\alpha$ is a Moore chain in \w{\pi\sb{0}\Ud} by Definition \ref{dscwo},
so by Lemma \ref{lmoore}(a), \w{\odz{n}} can be represented by a
continuous map \w{\udz{n-1}:\Wn{n-1}{n-1}\to\uW{n}} satisfying:
\begin{myeq}\label{eqvnish}
\udz{n-1}\circ d\sp{j}\sb{n-2}=0\hsm \text{for all}\hsm 1\leq j\leq n-1~.
\end{myeq}
\noindent This defines \w{\vdz{n-1}:\Wn{n-1}{n-1}\to\tWn{n}{n}} into the product
\wref[,]{eqninduct} extending the given face map
\w[\vsm.]{\dz{n-1}:\Wn{n-1}{n-1}\to\Wn{n}{n-1}}

\noindent\textbf{Step $\mathbf{k=n-1}$ of the descending induction\vsn:}

We define \w{\tWn{n-1}{n}} by the pullback diagram \wref[,]{eqdopb} with
\w[.]{\etk{n-1}=0} Thus:
\begin{myeq}\label{eqnminduct}
\tWn{n-1}{n}~:=~\Wn{n-1}{n-1}~\times~P\uW{n}~.
\end{myeq}

By \wref[,]{eqattach} the class $\alpha$ of \wref{eqmoorecyc} is in fact a
Moore cycle, so by Lemma \ref{lmoore}(b) we have a nullhomotopy
\begin{myeq}\label{eqnullh}
\Fk{n-2}:\udz{n-1}\circ \dz{n-2}\sim 0
\end{myeq}
\noindent satisfying \wref[.]{eqhvanish}

We define \w{\vdz{n-2}:\Wn{n-2}{n-1}\to\tWn{n-1}{n}} into the new factor
\w{P\uW{n}} (and extending the given face map
\w[)]{\dz{n-2}:\Wn{n-2}{n-1}\to\Wn{n-1}{n-1}} to be
\begin{myeq}\label{eqnmszero}
\Fk{n-2}:\Wn{n-2}{n-1}~\to~P\uW{n}~.
\end{myeq}

We define the coface map \w{\td\sp{1}\sb{n-1}:\tWn{n-1}{n}\to\tWn{n}{n}}
by the given \w{d\sp{1}\sb{n-1}:\Wn{n-1}{n-1}\to\Wn{n}{n-1}} into the first
factor of \wref[,]{eqninduct} and the composite
\begin{myeq}\label{eqndone}
\tWn{n-1}{n-1}~\xra{\proj\sb{P\uW{n}}}~P\uW{n}~\xra{p}~\uW{n}~,
\end{myeq}
\noindent onto the second factor of \wref{eqninduct} (where
\w{p:P\uW{n}\to\uW{n}} is the path fibration).

The remaining face maps
\w{\td\sp{i}\sb{n-1}:\tWn{n-1}{n}\to\tWn{n}{n}} \wb{2\leq i\leq n}
extend the given \w{d\sp{i}\sb{n-1}:\Wn{n-1}{n-1}\to\Wn{n}{n-1}} by the zero
map into the CW basis \w[.]{\uW{n}}

The only cosimplicial identity that can be verified at this stage is
$$
\td\sp{1}\sb{n-1}\td\sp{0}\sb{n-2}~=~\td\sp{0}\sb{n-1}\td\sp{0}\sb{n-2}~,
$$
\noindent which follows from the fact that
\w[,]{p\circ \Fk{n-2}=\udz{n-1}\circ \dz{n-2}} by \wref[\vsm .]{eqnullh}

\noindent\textbf{Step $\mathbf{k}$ of the descending induction
\wb[\vsn:]{0<k\leq n-2}}

We define the map \w{\etk{k}:\Wn{k}{n-1}\to\Omega\sp{n-k-1}\uW{n}} as follows:

By assumption we are given a homotopy
\w[,]{\Fk{k}:\etk{k+1}\circ\dz{k}\sim\ak{k}}
so we get a homotopy:
\begin{myeq}\label{eqselfnull}
\Fk{k}\circ \dz{k-1}:\etk{k+1}\circ\dz{k}\circ \dz{k-1}~\sim~\ak{k}\circ
\dz{k-1}
\end{myeq}
\noindent where
\w{\etk{k+1}\circ\dz{k}\circ\dz{k-1}:\Wn{k-1}{n-1}\to\Omega\sp{n-k-2}\uW{n}}
is the zero map by \wref[,]{eqetak} Definition \ref{dccwo}(c), and the identity
\w[.]{d\sp{1}d\sp{0}=d\sp{0}d\sp{0}} Since also \w{\ak{k}\circ \dz{k-1}=0}
by \wref[,]{eqavanish}
\w{\Fk{k}\circ \dz{k-1}:\Wn{k-1}{n-1}\to(\Omega\sp{n-k-2}\uW{n})\sp{\Delta[1]}}
is a self-nullhomotopy, so it factors through the inclusion
\w{i\sb{k}:\Omega\sp{n-k-1}\uW{n}\hra(\Omega\sp{n-k-2}\uW{n})\sp{\Delta[1]}}
and thus defines a map \w{\ak{k-1}:\Wn{k-1}{n-1}\to\Omega\sp{n-k-1}\uW{n}}
with
\begin{myeq}\label{eqikak}
i\sb{k}\circ\ak{k-1}~=~\Fk{k}\circ \dz{k-1}~.
\end{myeq}
\noindent Moreover,
$$
i_{k}\circ\ak{k-1}\circ d\sp{i}\sb{k-2}~=~\Fk{k}\circ \dz{k-1}\circ
d\sp{i}\sb{k-2}~=~\Fk{k}\circ d\sp{i+1}\sb{k-1}\circ \dz{k-2}~=~0
$$
\noindent for all \w{0\leq i\leq k-1} by \wref[,]{eqhvanish} so \wref{eqavanish}
holds for \w{\ak{k-1}} since \w{i\sb{k}} is monic.

Therefore, \w{\ak{k-1}} is a \wwb{k-1}cycle for the Reedy fibrant bisimplicial
set \w[,]{\Ud:=\map\sb{\C}(\W{n-1},\,\Omega\sp{n-k-1}\uW{n})} and thus in
particular represents a \wwb{k-1}cycle \w{[\ak{k-1}]} for
\w[,]{\Vd\lin{\Omega\sp{n-k-1}\uW{n}}} as in \wref[.]{eqmoorecyc}

Because \w{\Vd\to\Lambda} is a resolution, and thus acyclic, there is a class
\w{\gamma\sb{k}\in\oV{k}\lin{\Omega\sp{n-k-1}\uW{n}}} with
\w[.]{\odz{k}(\gamma\sb{k})~=~[\ak{k-1}]} Moreover, the map
\w{\ophi{k}{n-1}:\Wn{k}{n-1}\to\uW{k}} of \wref{eqcwstru} induces the inclusion
\w[,]{(\ophi{k}{n-1})\sp{\ast}:\oV{k}\hra V_{k}} so we have a class
\w{[\etk{k}]:=(\ophi{k}{n-1})\sp{\ast}(\gamma\sb{k})\in
V\sb{k}\lin{\Omega\sp{n-k-1}\uW{n}}} which is a Moore chain by \S \ref{dccwo}(b).

By Lemma \ref{lmoore}(a) we can represent this class by a map
\w{\etk{k}:\Wn{k}{n-1}\to\Omega\sp{n-k-1}\uW{n}} satisfying \wref[,]{eqetak}
while by Lemma \ref{lmoore}(b) we have a homotopy
\begin{myeq}\label{eqhkmone}
\Fk{k-1}:\etk{k}\circ \dz{k-1}\sim\ak{k-1}:\Wn{k-1}{n-1}\to
\Omega\sp{n-k-1}\uW{n}
\end{myeq}
\noindent satisfying \wref{eqhvanish} for \w[.]{k-1}

We now define \w{\tWn{k}{n}} by the pullback diagram \wref[,]{eqdopb} in which
both vertical arrows are weak equivalences. To define the coface map
\w{\vdz{k-1}:\Wn{k-1}{n-1}\to\tWn{k}{n}} extending
\w[,]{\dz{k-1}:\Wn{k-1}{n-1}\to\Wn{k}{n-1}} we set
\w{\qk{k}\circ\vdz{k-1}:\Wn{k-1}{n-1}\to(\Omega\sp{n-k-1}\uW{n})\sp{\Delta[1]}}
equal to \w[.]{\Fk{k-1}}

To define the face map \w{\td\sp{1}\sb{k}:\tWn{k}{n}\to\tWn{k+1}{n}} into the
pullback (extending the given map
\w[),]{d\sp{1}\sb{k}\circ\psn{k}{n}:\tWn{k}{n-1}\to\Wn{k+1}{n-1}}
it suffices to specify the composite
$$
\qk{k+1}\circ\td\sp{1}\sb{k}:\tWn{k}{n}~\to~(\Omega\sp{n-k-2}\uW{n})\sp{\Delta[1]}~,
$$
\noindent which we set equal to the composite:
\begin{myeq}\label{eqkdone}
\tWn{k}{n}~\xra{\qk{k}}~(\Omega\sp{n-k-1}\uW{n})\sp{\Delta[1]}~
\xra{\ev\sb{1}}~\Omega\sp{n-k-1}\uW{n}~\xra{i\sb{k}}~
(\Omega\sp{n-k-2}\uW{n})\sp{\Delta[1]}~.
\end{myeq}
\noindent This indeed defines a map into the pullback
\wref{eqdopb} for \w[,]{k+1} since the composite
$$
\Omega\sp{n-k-1}\uW{n}~\xra{i\sb{k}}~(\Omega\sp{n-k-2}\uW{n})\sp{\Delta[1]}~
\xra{\ev\sb{0}}~\Omega\sp{n-k-2}\uW{n}
$$
\noindent is zero, which matches \wref[.]{eqetak}

The remaining face maps \w{\td\sp{i}\sb{k}:\tWn{k}{n}\to\tWn{k+1}{n}}
\wb{2\leq i\leq k+1} are defined by extending the given
\w{d\sp{i}\sb{k}:\Wn{k}{n-1}\to\Wn{k+1}{n-1}} by the zero map into
\w{(\Omega\sp{n-k-2}\uW{n})\sp{\Delta[1]}} (which again matches
\wref[).]{eqetak}

To verify the cosimplicial identity
\begin{myeq}\label{eqzeroone}
\td\sp{1}\sb{k}\circ\vdz{k-1}~=~\tdz{k}\circ\vdz{k-1}~:~
\Wn{k-1}{n}~\to~\tWn{k+1}{n}~,
\end{myeq}
\noindent it suffices to check the post-composition with \w[,]{\qk{k+1}} where:
\begin{equation*}
\begin{split}
\qk{k+1}\circ d\sp{1}\sb{k}\circ\vdz{k-1}~=&~
i\sb{k}\circ\ev\sb{1}\circ \qk{k}\circ\vdz{k-1}~=~
i\sb{k}\circ \ev\sb{1}\circ \Fk{k-1}\\
~=&~i\sb{k}\circ\ak{k-1}~=~
\Fk{k}\circ\dz{k-1}~=~\qk{k+1}\circ\vdz{k}\circ\dz{k-1}~=~
\qk{k+1}\circ \dz{k}\circ\vdz{k-1}~,
\end{split}
\end{equation*}
\noindent by \wref[,]{eqkdone} \wref{eqhkmone} \wref[,]{eqikak} and
\wref[.]{eqtdz}

The identities \w{\td\sp{j}\sb{k+1}\td\sp{i}\sb{k}=
\td\sp{i}\sb{k+1}\td\sp{j-1}\sb{k}}
hold trivially, with both sides vanishing, for all for \w{k+2\geq j>i\geq 0}
except for \w[.]{(j,i)\in\{(1,0),(2,0),(2,1)\}} We check these three cases:

\begin{enumerate}
\renewcommand{\labelenumi}{(\alph{enumi})~}
\item The case \w{\td\sp{1}\sb{k+1}\tdz{k}=\tdz{k+1}\tdz{k}} is \wref[,]{eqzeroone}
which was already verified in step \w[.]{k+1}
\item To show
\w[,]{\td\sp{2}\sb{k+1}\td\sp{1}\sb{k}=\td\sp{1}\sb{k+1}\td\sp{1}\sb{k}}
it suffices to check the post-composition with \w[,]{\qk{k+2}} where
\w{\qk{k+2}\circ\td\sp{2}\sb{k+1}=0} by definition, while
$$
\qk{k+2}\circ\td\sp{1}\sb{k+1}\circ\td\sp{1}\sb{k}~=~
i\sb{k+1}\circ \ev\sb{1}\circ \qk{k+1}\circ\td\sp{1}\sb{k}~=~
i\sb{k+1}\circ \ev\sb{1}\circ i\sb{k}\circ \ev\sb{1}\circ \qk{k}~=~0
$$
\noindent since
\w{\ev\sb{1}\circ i\sb{k}:\Omega\sp{n-k-1}\uW{n}\to\Omega\sp{n-k-2}\uW{n}}
is the zero map.
\item To show \w[,]{\td\sp{2}\sb{k+1}\tdz{k}=\tdz{k+1}\td\sp{1}\sb{k}}
it suffices to check the post-composition with \w[,]{\qk{k+2}} where again
\w{\qk{k+2}\circ\td\sp{2}\sb{k+1}=0} by definition, while
$$
\qk{k+2}\circ\tdz{k+1}\circ\td\sp{1}\sb{k}~=~
\Fk{k+1}\circ \psn{k+1}{n}\circ\td\sp{1}\sb{k}~=~
\Fk{k+1}\circ d\sp{1}\sb{k}\circ \psn{k}{n}~=~0
$$
\noindent (in the notation of \wref[),]{eqdopb} by \wref[\vsm .]{eqhvanish}
\end{enumerate}

\noindent\textbf{Step $\mathbf{k=0}$ of the descending induction\vsn:}

If \w[,]{\fX=\fMT\bY} the last step of the induction is no different from the
general $k$, with \w[.]{\Wn{-1}{n-1}:=\bY} However, in the general case we
no longer have an object \w{\Wn{k-1}{n-1}} in $\C$ for \w[,]{k=0} so we
must modify our construction somewhat:

By induction the homotopy \w{\Fk{0}} is a $1$-simplex in
\w[,]{(\fMT\Wn{0}{n-1}\lin{\Omega\sp{n-2}\uW{n}})\sb{1}} (for which
\wref{eqhvanish} is vacuous).
Its simplicial face maps are \w{d\sb{0}\Fk{0}=\etk{1}\circ\dz{0}} and
\w[,]{d\sb{1}\Fk{0}=\ak{0}} respectively.

Applying \w{\vn{n-1}:\fMT\Wn{0}{n-1}\to\fX} to
\w{\Fk{0}} yields a $1$-simplex
\w{\vn{n-1}(\Fk{0})\in\fX\lin{\Omega\sp{n-2}\uW{n}}} with
\begin{equation*}
\begin{split}
d\sb{0}\vn{n-1}(\Fk{0})~=&~\vn{n-1}(\etk{1}\circ\dz{0})~=~
(\vn{n-1}\circ \dz{0})\sb{\#}(\etk{1})
~=~(\vn{n-1}\circ d\sp{1}\sb{0})\sb{\#}(\etk{1}\circ\ophi{0}{n-1})\\
~=&~\vn{n-1}(\etk{1}\circ d\sp{1}\sb{0})~=~0
\end{split}
\end{equation*}
\noindent by \wref{eqaugid} and \wref[.]{eqetak}

Similarly, since
\w{i\sb{1}\circ\ak{0}=\Fk{1}\circ\dz{0}:
\Wn{0}{n-1}\to(\Omega\sp{n-3}\uW{n})\sp{\Delta[1]}} for
\w[,]{i\sb{1}:\Omega\sp{n-2}\uW{n}\hra (\Omega\sp{n-3}\uW{n})\sp{\Delta[1]}}
we see that:
\begin{equation*}
\begin{split}
(i\sb{1})\sb{\#}d\sb{1}\vn{n-1}(\Fk{0})~=&~
d\sb{1}\vn{n-1}(i\sb{1}\circ \Fk{0})~=~
\vn{n-1}(i\sb{1}\circ\ak{0})~=~
\vn{n-1}(\Fk{1}\circ\dz{0})\\
~=&~(\vn{n-1}\circ\dz{0})\sb{\#}(\Fk{1})
~=~(\vn{n-1}\circ d\sp{1}\sb{0})\sb{\#}(\Fk{1})~=~
\vn{n-1}(\Fk{1}\circ d\sp{1}\sb{0})~=~0
\end{split}
\end{equation*}
\noindent by \wref[.]{eqhvanish}

Since \w{i\sb{1}} is a cofibration in $\C$ by \wref{eqpathloop} (and the fact that
any \w{\uW{n}\in\bT} is cofibrant), by \S \ref{dma}(c)
\w{(i\sb{1})\sb{\#}} is monic, so \w[,]{d\sb{1}\vn{n-1}(\Fk{0})=0}  too.

Thus the $1$-simplex \w{\vn{n-1}(\Fk{0})\in\fX\lin{\Omega\sp{n-2}\uW{n}}}
actually defines a $0$-simplex \w{\ak{-1}} in
\w[,]{\fX\lin{\Omega\sp{n-1}\uW{n}}} representing some class
\w[.]{[\ak{-1}]\in\pi\sb{0}\fX\lin{\Omega\sp{n-1}\uW{n}}=
\Lambda\lin{\Omega\sp{n-1}\uW{n}}}

Because \w{\Vd} is a \Tal resolution of $\Lambda$, \w{\vare:V\sb{0}\to\Lambda}
is surjective, so there is a class
\w{[\etk{0}]\in V\sb{0}\lin{\Omega\sp{n-1}\uW{n}}} with
\w[,]{\vare([\etk{0}])=[\ak{-1}]}  which we can represent by a map
\w[,]{\etk{0}:\Wn{0}{n-1}\to\Omega\sp{n-1}\uW{n}} together with a $1$-simplex
\w{\Fk{-1}\in\fX\lin{\Omega\sp{n-1}\uW{n}}} with
\w{d\sb{0}\Fk{-1}=\vn{n-1}(\etk{0})} and \w[.]{d\sb{0}\Fk{-1}=\ak{-1}}

We can thus use \w{\etk{0}} to define \w{\tWn{0}{n}}
by the pullback diagram \wref[,]{eqdopb}  and use \w{\Fk{-1}} to extend
\w{\vn{n-1}} to \w[\vsm.]{\vn{n}:\fMT\tWn{0}{n}\to\fX}

\noindent\textbf{Completing the $n$-th step of the outer induction\vsn:}

Once the inner (descending) induction is completes, we define a full
cosimplicial object \w{\vWu{n}} by ascending induction on the cosimplicial
dimension $k$. This will be equipped with a map of restricted cosimplicial
objects \w[,]{g:\vWu{n}\to\tWn{\bullet}{n}} such that the composite
map \w{f:=\psn{}{n}\circ g:\vWu{n}\to\W{n-1}} is a map of cosimplicial objects
which is a dimensionwise weak equivalence\vsm .

We start with \w{\vWn{0}{n}:=\tWn{0}{n}} and \w[,]{g\sp{0}:=\Id} and define
\w{\vWn{k}{n}} by the pullback diagram:
\mydiagram[\label{eqmatchpb}]{
\ar @{} [drrrr] |<<<<<<<<{\framebox{\scriptsize{PB}}}
\vWn{k}{n} \ar[d]\sp{g\sp{k}} \ar[rrrr]\sp{\zn{k}{n}} &&&&
M\sp{k}\vWu{n} \ar[d]\sp{M\sp{k}f}\\
\tWn{k}{n} \ar[rr]\sp{\psn{k}{n}} && \Wn{k}{n-1} \ar[rr]\sp{\zn{k}{n-1}}
&& M\sp{k}\W{n-1}
}
\noindent (using the notation of \S \ref{slmo}).

The codegeneracy maps of \w{\vWu{n}} are defined by \wref[,]{equnivcod}
while the coface map
\w{\hat{d}\sp{j}\sb{k-1}:\vWn{k-1}{n}\to\vWn{k}{n}} is defined by the maps
\w{\td\sp{j}\sb{k-1}:\tWn{k-1}{n}\to\tWn{k}{n}} constructed in the descending
induction and the induced map \w{M\sp{k-1}\vWu{n}\to M\sp{k}\vWu{n}} determined
by the current induction, \wref[,]{equnivcod} and the cosimplicial
identities\vsn.

Finally, we let \w{h:\W{n}~\xepic{\simeq}~\vWu{n}} be any Reedy cofibrant
replacement (cf. \S \ref{sremc}). Note that \w{\W{n}} is still a weak CW
object, with CW basis \w[,]{(\uW{k})\sb{k=0}\sp{n}} and the map
\w{\oph\sp{k}\sb{[n]}:\Wn{k}{n}\to\uW{k}} of \wref{eqcwstru} defined to be the
composite:
$$
\Wn{k}{n}~\xra{h\sp{k}}~\vWn{k}{n}~\xra{g\sp{k}}~
\tWn{k}{n}~\xra{\psi\sp{k}\sb{[n]}}~\Wn{k}{n-1}~\xra{\oph\sp{k}\sb{[n-1]}}~\uW{k}~.
$$
\noindent Since $h$, $g$, and \w{\psn{}{n}} are maps of (restricted)
cosimplicial objects, by construction (and \wref[)]{eqhvanish} we see that
\wref{eqvantwo} is indeed satisfied.

This completes the $n$-th step of the outer induction, and thus the proof of
the Theorem.
\end{proof}

\begin{mysubsection}{Cosimplicial resolutions and higher operations}
\label{scrho}
The particular construction used to produce the realization \w{\Wu} of
the algebraic resolution \w{\Vd\to\Lambda} actually encodes, in an explicit
form, the additional higher order information that is needed to distinguish
between different objects $\bY$ realizing the given \Tal $\Lambda$.

For example, if \w{\C=\Sa} and \w[,]{\eA=\HR{R}} this
additional data takes the form of the higher order cohomology operations (see, e.g.,
\cite{AdamsN,BMarkH,MaunC,GWalkL}), as follows:

Note that the $n$-th object \w{\oV{n}} of the algebraic CW basis for
\w{\Vd} is a coproduct of free monogenic \TRal[s] of the form
\w[,]{H\sp{\ast}(\KR{n_{i}};R)} each of which is indexed
by a map \w{\phi_{i}:H\sp{\ast}(\KR{n_{i}};R)\to V_{n-1}} (the restriction of
the attaching map \w[).]{\odz{n}:\oV{n}\to V_{n-1}} Moreover,
\w{\phi_{i}} factors through some finite coproduct
\w{\coprod_{j=1}^{k_{i}}\,H\sp{\ast}(\KR{n_{j}};R)} of free monogenic
\TRal[s] in \w[,]{V\sb{n-1}} each of which is in turn indexed by a map
\w[,]{\psi_{i,j}:H\sp{\ast}(\KR{n_{j}};R)\to V_{n-2}} and so on. Thus the
original summand \w{H\sp{\ast}(\KR{n_{i}};R)} of \w{\oV{n}} is ultimately
indexed by a composable sequence of \w{n+1} maps between finitely generated
free \TRal[s,] except for the very last, which lands in
\w[.]{H\sp{\ast}(\bY;R)=\pi_{0}\fX}

Moreover, the composite
\w{(\coprod_{j=1}\sp{k\sb{i}}\,\psi_{i,j})\circ\phi_{i}} vanishes, since
\w{d_{0}\circ\odz{n}} in a CW object. This means that when we realize \w{\Vd}
by the cosimplicial space \w[,]{\Wu} the corresponding composite will be
null-homotopic. This is the source of the map
\w{\etk{n-2}:\uWn{n-2}{n-1}\to\Omega\uW{n}} (in step \w{k=n-2} of the
descending induction in the proof), which is in fact just the Toda bracket,
or secondary cohomology operation, associated to this nullhomotopy (see
\cite[\S 4.1]{HarpSC}).  The $0$-th face map into the higher loops
\w{\Omega\sp{n-k-1}\uW{n}} are analogously associated to higher order
cohomology operations, corresponding to the initial segments of the above
composable sequence of homotopy classes of maps indexing each summand
\w[.]{H\sp{\ast}(\KR{n_{i}};R)}

See \cite{BJTurnHA} for a detailed discussion of higher homotopy operations
and simplicial spaces in the dual setting.
\end{mysubsection}

%
%
\sect{Recognizing and realizing mapping algebras}
\label{crma}

We are now in a position to address the fundamental question of
\emph{recognizing} mapping spaces of the form \w[,]{X=\mapa(\bY,\bA)}
given $\bA$.  Unfortunately, we are able to give a satisfactory
answer only when \w{\bA=\KR{n}} for \w{R=\QQ} or \w[.]{\Fp}

First, we note the following elementary fact:

\begin{lemma}\label{lcocycles}
If $R$ is a field, and \w{G\to\Bu} is a coaugmented cosimplicial $R$-module
such that the dual augmented simplicial $R$-module \w{\Hm{(\Bu)}\to\Hm{G}}
is acyclic, then \w{G\to\Bu} is acyclic, too.
\end{lemma}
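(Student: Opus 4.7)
The plan is to reduce the statement to a routine duality argument for chain complexes of $R$-vector spaces, exploiting the fact that over a field $(-)\sp{\dag} := \Hom\sb{R}(-,R)$ is an exact contravariant functor. The starting point is the standard equivalence between acyclicity of an augmented (co)simplicial $R$-module and exactness of its associated unnormalized (co)chain complex (with alternating-sign differential), augmented by the structure map $\vare$.

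First I would form the unnormalized augmented cochain complex of $G \to \Bu$, namely
$$
G~\xra{\vare}~B\sp{0}~\xra{\delta\sp{0}}~B\sp{1}~\xra{\delta\sp{1}}~\cdots~,
\qquad \delta\sp{n}~=~\sum\sb{i=0}\sp{n+1}\,(-1)\sp{i}\,d\sp{i}~,
$$
whose exactness is equivalent to $G\to\Bu$ being acyclic. Applying $(-)\sp{\dag}$ degreewise yields precisely the unnormalized augmented chain complex of the simplicial $R$-module $\Hm{(\Bu)}\to\Hm{G}$, since the face maps of $\Hm{(\Bu)}$ are by definition the duals of the coface maps of $\Bu$. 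Consequently, the hypothesis of the lemma says exactly that this dualized complex is exact.

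Next, because $(-)\sp{\dag}$ is exact over a field it commutes with passage to (co)homology. For each $n\geq 1$ I would therefore obtain
$$
\Hm{(H\sp{n}(\Bu))}~\cong~H\sb{n}(\Hm{(\Bu)})~=~0~,
$$
and since an $R$-vector space with zero dual is itself zero, conclude $H\sp{n}(\Bu)=0$ for $n\geq 1$. For the augmentation piece, dualizing the canonical map $\vare\sb{\ast}:G\to H\sp{0}(\Bu)$ yields the isomorphism $H\sb{0}(\Hm{(\Bu)})\cong\Hm{G}$ supplied by the hypothesis; and over a field a linear map between vector spaces is an isomorphism if and only if its $R$-dual is (surjectivity of $f\sp{\dag}$ forces $f$ injective, injectivity of $f\sp{\dag}$ forces $f$ surjective), so $\vare\sb{\ast}$ is an isomorphism and $G\to\Bu$ is acyclic.

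There is no real obstacle here: the whole argument is a formal consequence of exactness of $R$-linear duality over a field, together with the quasi-isomorphism between Moore normalized (co)chain complexes and the corresponding unnormalized alternating-sum complexes on which dualization acts most cleanly. The only small bookkeeping point worth mentioning is this last quasi-isomorphism, which is routine.
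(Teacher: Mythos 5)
Your argument is correct and is essentially the paper's proof: both dualize the (co)chain complex of $\Bu$, use that $\Hom\sb{R}(-,R)$ is exact over a field so that it commutes with passage to (co)homology (the paper phrases this via the Universal Coefficient Theorem applied to the Moore complexes rather than the unnormalized alternating-sum complexes), and conclude from the fact that an $R$-vector space with vanishing dual is zero. The only cosmetic difference is your use of the unnormalized complexes, which requires the routine Dold--Kan quasi-isomorphism you already flag.
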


Here \w{\Hm{W}:=\Hom\sb{R}(W,R)} is the $R$-dual of an $R$-module $W$.

\begin{proof}
Write \w{\Ad} for \w[.]{\Hm{(\Bu)}} By Definition \ref{dmco}, the
$n$-th Moore chains object \w{C\sb{n}\Ad} of \w{\Ad\to\Hm{G}} is obtained
by applying \w{\Hom\sb{R}(-,R)} to the $n$-th Moore cochains object
\w{C\sp{n}\Bu} of \w[,]{G\to\Bu} since the latter is defined by the colimit
\wref[,]{eqmoorecc} and the former by the corresponding limit \wref[.]{eqmoor}

If we think of the cochain complex \w{C\sp{\ast}\Bu} as a negatively-indexed
chain complex \w[,]{E\sb{\ast}} and thus of \w{C\sb{\ast}\Ad} as
the cochain complex \w[,]{\Hom\sb{R}(E\sb{\ast},R)} we see that
$$
0~=~\pi\sb{i}\Ad~=~H\sb{i}(C\sb{\ast}\Ad)~=~H\sp{-i}(E\sb{\ast},R)~\xra{\cong}~
\Hom\sb{R}(H\sb{-i}(E\sb{\ast}),R)~,
$$
\noindent by the Universal Coefficient Theorem
(cf.\ \cite[Theorem 3.6.5]{WeibHA}). Since the homology groups
\w{H\sb{-i}(E\sb{\ast})\cong\pi\sp{i}\Bu} are free $R$-modules, they must
all vanish.
\end{proof}

\begin{defn}\label{dftsc}
For any ring $R$, and \w{\TsR} as in \S \ref{egeth}, a \TRal $\Lambda$ is
\begin{enumerate}
\renewcommand{\labelenumi}{(\alph{enumi})~}
\item \emph{$k$-connected} if \w{\Lambda\lin{\KR{i}}=0} for \w[.]{0\leq i\leq k}
\item \emph{Finite type} if \w{\Lambda\lin{\KR{n}}} is a finitely generated
$R$-module for each \w[.]{n\geq 0}
\item \emph{Finite} if it is finite type, and there is an $N$ such that
\w{\Lambda\lin{\KR{n}}} vanishes for \w[.]{n\geq N}
\item \emph{Allowable} if there is a partially ordered set $J$, with the
under category \w{j/J} finite for each \w[,]{j\in J} and a diagram of
finite \TRal[s] \w{\uL:J\to\TRA} such that
\begin{myeq}\label{eqallowlim}
\Lambda~=~\lim\sb{j\in J}\,\uL\sb{j}~
\end{myeq}
\noindent in \w[.]{\TRA}
\end{enumerate}

We say that an \Rma $\fX$ is \emph{simply-connected}, \emph{finite type},
\emph{finite}, or \emph{allowable} if the \TRal \w{\pi\sb{0}\fX} is such.
\end{defn}

\begin{example}\label{egallow}
Any finite-type \TRal $\Lambda$ is allowable, with \wref{eqallowlim} given
by the directed system of finite truncations of $\Lambda$.
\end{example}

\begin{remark}\label{rallow}
If $R$ is a field with \w[,]{|R|\leq\aleph\sb{0}} such as \w{\Fq}
or $\QQ$, and \w[,]{\dim\sb{R}V=\aleph\sb{0}} then
\w[,]{V\cong\bigoplus\sb{i=1}\sp{\infty}\,R} so
\w{\Hm{V}\cong\prod\sb{i=1}\sp{\infty}\,R} and thus
\w[.]{|\Hm{V}|=|R|\sp{\aleph\sb{0}}>|R|\cdot\aleph\sb{0}} Therefore,
no $R$-module $W$ of dimension \w{\aleph\sb{0}} can be isomorphic to \w{\Hm{V}}
for any $R$-module $V$. Thus a \TRal $\Lambda$ for which some
\w{\Lambda\lin{\KR{n}}} is \ww{\aleph\sb{0}}-dimensional is not allowable.
Similar phenomena occur for other fields and other cardinals.
\end{remark}

\begin{lemma}\label{lallow}
Let $R$ be a field.
\begin{enumerate}
\renewcommand{\labelenumi}{(\alph{enumi})~}
\item If $\Lambda$ is an allowable \TRal[,] then there is a graded $R$-module
\w{M\sb{\ast}} with \w{\Lambda\lin{\KR{i}}\cong\Hm{M\sb{i}}}
(as $R$-modules) for each \w[.]{i\geq 0}
\item Any realizable \Rma \w{\fX=\fMA\bY} is allowable, and
\w[.]{M\sb{\ast}=H\sb{\ast}(\bY;R)}
\end{enumerate}
\end{lemma}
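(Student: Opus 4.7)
For part (a), the plan is to exploit duality over the field $R$. Since each $\uL_j$ is finite, each component $\uL_j\lin{\KR{i}}$ is a finite-dimensional $R$-vector space, hence canonically isomorphic to its double dual. As $\Hm{-}=\Hom_R(-,R)$ converts colimits into limits,
\begin{equation*}
\Lambda\lin{\KR{i}}~=~\lim_{j\in J}\,\uL_j\lin{\KR{i}}~\cong~\lim_{j\in J}\,\Hm{\Hm{\uL_j\lin{\KR{i}}}}~\cong~\Hm{\colim_{j\in J\op}\,\Hm{\uL_j\lin{\KR{i}}}},
\end{equation*}
so we may set $M_i:=\colim_{j\in J\op}\,\Hm{\uL_j\lin{\KR{i}}}$ to obtain (a).

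For part (b), write $\fX=\fMA\bY$. The Universal Coefficient Theorem over the field $R$ supplies the identification
\begin{equation*}
(\pi_0\fX)\lin{\KR{i}}~=~[\bY,\KR{i}]~=~H^i(\bY;R)~\cong~\Hm{H_i(\bY;R)},
\end{equation*}
which gives $M_i=H_i(\bY;R)$ in the notation of (a), once allowability has been established.

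The substantive task is to present $H^\ast(\bY;R)$ as a limit of \emph{finite} $\TR$-algebras over a poset with finite under-categories. My plan is to take $J$ to be the set of finite sub-CW-complexes of a CW model for $\bY$, but ordered by \emph{reverse} inclusion: $\bY_\alpha\leq_J\bY_\beta$ iff $\bY_\beta\subseteq\bY_\alpha$. Then $\uL(\bY_\alpha):=H^\ast(\bY_\alpha;R)$ is a finite $\TR$-algebra (finite-dimensional in each degree and vanishing above $\dim\bY_\alpha$), the restriction maps give a functor $\uL:J\to\TRA$, and the under-category $\bY_\alpha/J$ consists of the sub-CW-complexes of the finite complex $\bY_\alpha$, hence is finite. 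The identification $H^\ast(\bY;R)\cong\lim_J\uL$ will then follow by dualizing the standard fact that singular homology commutes with filtered colimits, $H_\ast(\bY;R)=\colim_\alpha H_\ast(\bY_\alpha;R)$, and using that limits in $\TRA$ are computed on underlying graded $R$-modules while the restriction maps are $\TR$-algebra morphisms.

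The main obstacle is arranging the finite under-category condition: the naive inclusion order fails it (a finite subcomplex has infinitely many finite supercomplexes), so reversing the order and exploiting the fact that a finite CW complex has only finitely many sub-CW-complexes is essential. Once this is in place, verifying that the graded-module isomorphism $H^\ast(\bY;R)\xra{\cong}\lim_J H^\ast(\bY_\alpha;R)$ is actually an isomorphism in $\TRA$ is straightforward, since both structures are induced from the spatial operations which respect the limit coordinate-wise.
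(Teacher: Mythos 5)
Your proof is correct and follows essentially the same route as the paper's: part (a) by double dualization of the objectwise limit (using that the forgetful functor to graded $R$-modules creates limits), and part (b) via the inverse system of finite subcomplexes of $\bY$ together with the Universal Coefficient Theorem. The only places you go beyond the paper are in explicitly verifying the finite under-category condition by reversing the inclusion order (a detail the paper's proof leaves implicit) and in deriving $H^{\ast}(\bY;R)\cong\lim_{j}H^{\ast}(\bY_{j};R)$ by dualizing the homology colimit rather than citing it from the literature; both are sound.
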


\begin{proof}
\noindent (a)~~Since (co)limits in functor categories are defined object-wise,
this follows from the fact that for a finite dimensional $R$-module $W$,
\w{\Hm{W}} has a natural isomorphism \w[,]{\Hm{(\Hm{W})}\cong W} and thus
\begin{myeq}\label{eqlimcolim}
\begin{split}
\Lambda\lin{\KR{i}}~=~(\lim\sb{j\in J}\,\uL\sb{j})\lin{\KR{i}}~\cong&~
\lim\sb{j\in J}\,\Hm{(\Hm{U\uL\sb{j}\lin{\KR{i}}})}\\
\cong&~\Hm{(\colim\sb{j\in J}\,\Hm{U\uL\sb{j}\lin{\KR{i}}})}~,
\end{split}
\end{myeq}
\noindent where \w{U:\TRA\to\gr\Mod{R}} is the forgetful functor to graded
$R$-modules, which creates all limits in \w{\TRA} since it is a right
adjoint\vsm.

\noindent (b)~~For any \w{\bY\in\Sa} there is a natural weak equivalence

\begin{myeq}\label{eqallowcoh}
H\sp{i}(\bY;R)~\xra{\cong}~\lim\sb{j\in J}~H\sp{i}(\bY\sb{j};R)\hsp
\text{for all}\hsm i\geq 0~,
\end{myeq}
\noindent where \w{(\bY\sb{j})\sb{j\in J}} is the directed system of finite
subcomplexes of $\bY$  (see \cite[VIII (F)]{ESteeF} and \cite[\S 2]{HMeiC}),
and \w{\pi\sb{0}\fX\lin{\KR{i}}=H\sp{i}(\bY,R)\cong\Hom\sb{R}(H\sb{i}\bY,R)}
by the Universal Coefficient Theorem.
\end{proof}

In particular, any free \TRal is realizable, and thus allowable.

\begin{prop}\label{pdualres}
Let \w{R=\Fp} or a field of characteristic $0$, let $\Lambda$ be
an allowable \TRal with \w{\vare:\Vd\to\Lambda} a free simplicial
\TRal resolution, and let \w{M\sb{\ast}} and \w{N\sb{\ast}} be the graded
$R$-modules of Lemma \ref{lallow} for $\Lambda$ and \w[,]{V\sb{0}} respectively.
Then there is a graded $R$-linear map \w{\psi\sb{\ast}:M\sb{\ast}\to N\sb{\ast}}
with \w{\vare\lin{\KR{i}}=\Hm{\psi\sb{i}}} for each \w[.]{i\geq 0}
\end{prop}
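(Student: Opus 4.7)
The plan is to exploit the freeness of $V_0$ to realize it geometrically as the $\TR$-mapping algebra of a GEM, and then to decompose $\Lambda$ via its allowable presentation, thereby reducing the duality assertion to the finite-dimensional case.

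Since $V_0$ is a free $\TR$-algebra, by \S\ref{rfralg} we may write $V_0 = \pi_0\fMR(\bB)$ where $\bB = \prod_\alpha \KR{m_\alpha}$ is an $R$-GEM; then Lemma~\ref{lallow}(b) identifies $N_i = H_i(\bB;R)$, and the isomorphism $V_0\lin{\KR{i}} \cong H^i(\bB;R) \cong \Hm{N_i}$ is the universal coefficient theorem (which requires $R$ to be a field). Writing the allowable presentation $\Lambda = \lim_{j\in J}\uL_j$ from Definition~\ref{dftsc}(d) with each $\uL_j$ finite, set $U_j := \uL_j\lin{\KR{i}}$, so that $\Lambda\lin{\KR{i}} = \lim_j U_j$ and $M_i = \colim_j U_j^*$ by \wref{eqlimcolim}. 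Since $\Hm{N_i}$ separates points of $N_i$ over a field, $\Hm{-}$ is injective on $\Hom(U_j^*, N_i)$, so it suffices to produce, for each $j$, an $R$-linear map $\psi_{i,j}: U_j^* \to N_i$ whose dual equals the composite $\vare_j\lin{\KR{i}} : \Hm{N_i}\to U_j$; compatibility across morphisms in $J$ is then automatic from the limit structure, and the $\psi_{i,j}$ assemble into $\psi_i : M_i \to N_i$ with $\Hm{\psi_i} = \vare\lin{\KR{i}}$.

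The principal step is to realize each $\vare_j$ geometrically. Fix $j$ and choose a $\TR$-algebra surjection $\pi_j : F_j \twoheadrightarrow \uL_j$ from a finite-type free $\TR$-algebra $F_j$, taken to be free on an $\NN$-graded basis of $\uL_j$; then $F_j = \pi_0\fMR(\bB_j)$ for a finite $R$-GEM $\bB_j$, and $\pi_j$ is surjective in every grading. Since $V_0$ is a coproduct of free monogenic $\TR$-algebras, Lemma~\ref{lfreeta} applied component-wise yields a lift $\tilde\vare_j : V_0 \to F_j$ of $\vare_j$. A second application of Lemma~\ref{lfreeta} identifies $\tilde\vare_j$ with a homotopy class $\hat f_j : \bB_j \to \bB$, and by the universal coefficient theorem $\tilde\vare_j\lin{\KR{i}} = \Hm{(\hat f_j)_*}$, where $(\hat f_j)_* : H_i(\bB_j;R) \to H_i(\bB;R) = N_i$. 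Finally, $\pi_j\lin{\KR{i}}$ is a surjection of finite-dimensional $R$-modules, hence dual to an injection $\iota_j : U_j^* \hookrightarrow H_i(\bB_j;R)$, and the composite $\psi_{i,j} := (\hat f_j)_*\circ \iota_j$ satisfies $\Hm{\psi_{i,j}} = \pi_j\lin{\KR{i}}\circ\Hm{(\hat f_j)_*} = \vare_j\lin{\KR{i}}$.

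The main obstacle is constructing the lift $\tilde\vare_j$ through a realized finite-type free $\TR$-algebra: this requires both the freeness (and hence projectivity with respect to surjections of $\TR$-algebras) of $V_0$, and the fact that finite-type free $\TR$-algebras are realized by finite GEMs, both of which are packaged into Lemma~\ref{lfreeta}. Once the lift is available, the remainder of the argument reduces to standard finite-dimensional linear-algebra duality and the universal coefficient theorem; the hypothesis that $R$ be $\Fp$ or a field of characteristic zero enters only through the need for $R$ to be a field, so that UCT splits the cohomology of $\bB$ and $\bB_j$ as $R$-duals of homology.
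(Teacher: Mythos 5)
Your proposal is correct in substance, but it is a genuinely different proof from the one in the paper. The paper dualizes the restrictions $\vare\sb{\alpha,j}$ of $\vare\sb{j}$ to the monogenic summands $H\sp{\ast}(\KR{n\sb{\alpha}};R)$ of $V\sb{0}$ one at a time, obtaining coalgebra maps $\Hm{\uL\sb{j}}\to H\sb{\ast}(\KR{n\sb{\alpha}};R)$, and then assembles them into a single map landing in $N\sb{\ast}=H\sb{\ast}(\bW\sp{0};R)$ by invoking the fact that $H\sb{\ast}(\prod\sb{\alpha}\KR{n\sb{\alpha}};R)$ is the \emph{categorical product} of the $H\sb{\ast}(\KR{n\sb{\alpha}};R)$ in cocommutative coalgebras (Milnor--Moore, in characteristic $0$) or in coalgebras over the Steenrod algebra (Bousfield, for $\Fp$); that is the only place the hypothesis on $R$ enters, and it is exactly what keeps the infinitely many componentwise duals from landing outside the tensor product. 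You avoid this input entirely: you first lift $\vare\sb{j}$ through a gradewise surjection $\pi\sb{j}:F\sb{j}\to\uL\sb{j}$ from a finitely generated free \TRal[,] realize the lift by a homotopy class $\hat{f}\sb{j}:\bB\sb{j}\to\bB$, and factor the dual through the finite-dimensional $H\sb{i}(\bB\sb{j};R)$; compatibility over $J$ then does come for free from the injectivity of $\Hm{-}$ on $\Hom(U\sb{j}\sp{\ast},N\sb{i})$, as you observe. Your route is more formal and would work over any field, granted Lemma \ref{lallow}. Its one cost is that the identification $\Hom\sb{\TRA}(V\sb{0},F\sb{j})\cong[\bB\sb{j},\bB]$ is not literally an instance of Lemma \ref{lfreeta} when $\bB$ is an infinite product (it is then not an object of $\TsR$): you need the decomposition of $V\sb{0}$ as the coproduct of its monogenic summands, $\Hom(V\sb{0},-)\cong\prod\sb{\alpha}(-)\lin{\KR{n\sb{\alpha}}}$, together with the universal property of $\prod\sb{\alpha}\KR{n\sb{\alpha}}$ in $\ho\Sa$. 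Since this is the same decomposition the paper itself uses when it asserts that $\vare\sb{j}$ is determined by the $\vare\sb{\alpha,j}$, this is a point to make explicit rather than a gap.
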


\begin{proof}
Since \w{V\sb{0}} is a free \TRal[,]  it is of the form
\w{V\sb{0}=H\sp{\ast}(\bW\sp{0};R)} (for
\w[),]{\bW\sp{0}\simeq\prod\sb{\alpha\in A}\,\KR{n\sb{\alpha}}} so it is allowable.
By \wref{eqallowlim} we know that $\vare$ is determined by a compatible
system of \TRal[-maps] \w[.]{\vare\sb{j}:V\sb{0}\to\Lambda\sb{j}} Moreover,
\w{V\sb{0}\cong\coprod\sb{\alpha\in A}\,H\sp{\ast}(\KR{n\sb{\alpha}};R)} is
a coproduct (over $A$) of monogenic free \TRal[s.] Thus
\w{\vare\sb{j}:V\sb{0}\to\Lambda\sb{j}} is completely determined by choices
of maps of \TRal[s]
\begin{myeq}\label{eqdualmaps}
\vare\sb{\alpha,j}:H\sp{\ast}(\KR{n\sb{\alpha}};R)~\to~\Lambda\sb{j}~,
\end{myeq}
\noindent  with compatibility requirements only with respect to the various
\w[.]{j\in J}

Since both source and target in \wref{eqdualmaps} are finite \TRal[s,] such
maps are completely determined by their duals, which are maps of $R$-coalgebras.

When \w[,]{\chr(R)=0} by the Milnor-Moore Theorem (cf.\ \cite[App.]{MMoorA})
we have:
\begin{myeq}\label{eqinfprod}
H\sb{\ast}(\bW\sp{0};R)~=~
H\sb{\ast}(\prod\sb{\alpha\in A}\,\KR{n\sb{\alpha}};R)~\cong~
\bigotimes\sb{\alpha\in A}~H\sb{\ast}(\,\KR{n\sb{\alpha}};R)~,
\end{myeq}
\noindent where the right-hand side is the product in the category of
cocommutative coalgebras (cf.\ \cite[1.1b]{GoeHH}), since
\w{H\sb{\ast}(\bW\sp{0};R)} is then the cofree cocommutative coalgebra
\w{V(G\sb{\ast})} on the graded $R$-module
\w[,]{G\sb{\ast}:=\pi\sb{\ast}\bW\sp{0}} and the functor $V$ preserves
products since it is right adjoint to the forgetful functor.

When \w{R=\Fp} the duals
\w{\hvare\sb{\alpha,j}:\Hm{\Lambda\sb{j}}\to H\sb{\ast}(\KR{n\sb{\alpha}};R)}
of \wref{eqdualmaps} are maps of \ww{\Fp}-coalgebras over the mod $p$ Steenrod
algebra. In this case, \wref{eqinfprod} still holds by \cite[\S 4.4]{BousHS},
where the right hand side is now the product in the category of
\ww{\Fp}-coalgebras over the Steenrod algebra. Therefore, all the
maps \w{\hvare\sb{\alpha,j}} define a unique map
\w{\hvare\sb{j}:\Hm{\Lambda\sb{j}}\to H\sb{\ast}(\bW\sp{0};R)} as required.
\end{proof}

As noted in \S \ref{realma}, the homotopy type of an $R$-GEM $X$
alone does not allow us to determine whether it is of the form
\w{X\simeq\mapa(\bY,\KR{n})} for some space \w[,]{\bY\in\Sa} and
recover $\bY$ uniquely \wh for this we need an \Rma structure $\fX$ on $X$
(cf.\ \S \ref{damastruc}). Thus we are naturally led to the following\vsn:

\noindent\textbf{Question:}\ Given an \Rma $\fX$, is there a space $\bY$ with
\w[,]{\fX\simeq\fMR\bY} and if so, is $\bY$ uniquely determined\vsn?

The answer is given by the following:

\begin{thm}\label{treal}
When $R$ is $\QQ$ or \w[,]{\Fp} any simply-connected allowable \Rma $\fX$ is
weakly equivalent to \w{\fMR\bY} (cf.\ \S \ref{saos}) for a simply-connected
$R$-complete space \w[,]{\bY\in\Sa} unique up to weak equivalence.
\end{thm}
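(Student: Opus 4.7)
The plan is to build $\bY$ by realizing an algebraic free resolution of $\Lambda := \pi\sb{0}\fX$ as a cosimplicial space and then totalizing. By hypothesis $\Lambda$ is simply-connected and allowable; Proposition \ref{psimptal} and Definition \ref{dcwres} provide a free simplicial CW-resolution $\vare: \Vd \to \cd{\Lambda}$ in $s\TRA$, with CW basis $\{\oV{n}\}\sb{n\geq 0}$ of coproducts of free monogenic \TRal[s.] Apply Theorem \ref{tres} to obtain a cosimplicial space $\Wu \in c\Sa$ whose levels $\bW\sp{n}$ are $R$-GEMs, together with an augmentation $\fMR\Wu \to \cd{\fX}$ of simplicial \Rma[s] inducing $\vare$ after applying $\pi\sb{0}$. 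Set $\bY := \uTot\Wu$. Since $\Lambda\lin{\KR{0}} = \Lambda\lin{\KR{1}} = 0$, one may arrange each $\bW\sp{n}$ to be simply-connected, so $\bY$ is simply-connected as well.

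The crux of the proof is to show that $\Wu$ is a weak $\G$-resolution of $\bY$ in the sense of Definition \ref{dwgf}, where $\G$ is the class of all simplicial $R$-modules. Weak $\G$-fibrancy is immediate (each $\bW\sp{n}$ is in $\G$, and $\Wu$ can be Reedy fibrantly replaced). The substantive point is that for every $M \in \G$, the augmented simplicial $R$-module $[\Wu, M] \to [\bY, M]$ is acyclic. By Theorem \ref{tres}, $[\Wu, M] \cong \Vd\lin{M}$ as simplicial $R$-modules, which is an acyclic resolution of $\Lambda\lin{M}$; it remains to identify $[\bY, M]$ with $\Lambda\lin{M}$. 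This is a cohomology computation for $\bY = \uTot\Wu$: by Lemma \ref{lallow} the graded pieces $\Lambda\lin{\KR{i}}$ and $V\sb{n}\lin{\KR{i}}$ are $R$-duals of graded $R$-modules, and Proposition \ref{pdualres} (using $R = \Fp$ or $\chr R = 0$) realizes $\vare\lin{\KR{i}}$ and the higher face maps of $\Vd\lin{\KR{i}}$ as $R$-duals of graded maps of such modules. Applying Lemma \ref{lcocycles} converts the known acyclicity of the simplicial resolution $\Vd\lin{\KR{i}}$ into vanishing of the positive cohomotopy of the cosimplicial $R$-module that drives the Bousfield--Kan cohomology spectral sequence for $\uTot\Wu$; this spectral sequence collapses and yields $H\sp{\ast}(\bY; R) \cong \Lambda\lin{\KR{\ast}}$, and analogously $[\bY, M] \cong \Lambda\lin{M}$ for every $M \in \G$.

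With $\Wu$ established as a weak $\G$-resolution of $\bY$, Bousfield's Theorems~7.7 and 9.5 of \cite{BousC} (as invoked in the proof of Theorem \ref{tresm}) give $\bY = \uTot\Wu \simeq \LG\bY \simeq R\sb{\infty}\bY$, so $\bY$ is $R$-complete. For the full \Rma identification $\fMR\bY \simeq \fX$ (not merely on $\pi\sb{0}$), compare $\Wu$ with the Stover cosimplicial resolution of $\bY$ produced by Proposition \ref{presma}: both are weak $\G$-resolutions of $\bY$, hence are $\G$-equivalent in the sense of \S \ref{sgrmc}. Applying $\fMR$ to this $\G$-equivalence identifies the augmentation $\fMR\bW\sp{0} \to \fX$ with the Stover resolution's augmentation to $\fMR\bY$, yielding $\fMR\bY \simeq \fX$.

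For uniqueness, any simply-connected $R$-complete $\bY'$ with $\fMR\bY' \simeq \fX$ can, by Theorem \ref{trecogms}, be recovered as the totalization of a weak $\G$-resolution of $\bY'$; that resolution realizes the same algebraic data $\Vd \to \Lambda$ and so is $\G$-equivalent to $\Wu$, giving $\bY' \simeq \uTot\Wu = \bY$. The main obstacle throughout is the cohomology identification $[\bY, M] \cong \Lambda\lin{M}$, since the naturally available acyclicity is that of the simplicial $R$-module $[\Wu, M]$, whereas the spectral sequence for $\uTot\Wu$ requires cosimplicial cohomotopy vanishing. The allowability of $\fX$ together with the restriction to $R = \QQ$ or $\Fp$ is exactly what makes the dualization of Proposition \ref{pdualres} available and, via Lemma \ref{lcocycles}, permits this transfer of acyclicity — which is why the theorem's hypotheses take this particular form.
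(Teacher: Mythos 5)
Your construction of $\bY$ and the proof that it is simply-connected and $R$-complete follow the paper's route: realize a free simplicial CW-resolution of $\Lambda=\pi\sb{0}\fX$ via Theorem \ref{tres}, dualize using allowability (Lemma \ref{lallow}, Proposition \ref{pdualres}), apply Lemma \ref{lcocycles} to convert the acyclicity of the simplicial $R$-module \w{[\Wu,\KR{i}]} into acyclicity of the cosimplicial $R$-module \w[,]{H\sb{\ast}(\Wu;R)} and then invoke the strongly convergent \emph{homology} spectral sequence of \cite{BousHS} (you call it the cohomology spectral sequence, but the object whose cohomotopy must vanish is \w[,]{H\sb{\ast}(\Wu;R)} exactly as in the paper). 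That part is essentially correct and matches the paper.

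The genuine gap is in your final step, the identification \w{\fMR\bY\simeq\fX} as \Rma[s.] A weak equivalence of \Rma[s] is by definition (\S \ref{saos}) a zigzag of actual maps of \Rma[s] inducing isomorphisms on \w[.]{\pi\sb{0}} What you have after Theorem \ref{tres} and the coaugmentation \w{\cu{\bY}\to\Wu} are two augmentations of the \emph{simplicial} \Rma \w{\fVd:=\fMR\Wu} \wwh one to \w{\cd{\fX}} and one to \w{\cd{\fMR\bY}} \wwh each a weak equivalence of simplicial \Rma[s.] Comparing \w{\Wu} with the Stover resolution of $\bY$ and ``applying \w{\fMR} to the $\G$-equivalence'' does not bridge this: a $\G$-equivalence of cosimplicial spaces augmented to two different targets yields at best an abstract isomorphism of the associated \TRal[s,] not a zigzag of \Rma maps between $\fX$ and \w[,]{\fMR\bY} and $\fX$ is not a priori of the form \w{\fMR} of anything, so there is nothing to ``identify the augmentations'' with. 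The missing idea is the paper's realization \w{\fZ:=\|\fVd\|} of the simplicial \Rma by dimensionwise diagonals: one must check that $\fZ$ is again an \Rma (using \cite[Theorem 6.2]{AndF} for condition (b) of Definition \ref{dma}), and then use the collapse of the Quillen spectral sequence of the bisimplicial groups \w{\fVd\lin{\bB}} (which follows from \wref[)]{eqhtpyhmlgy} to conclude that both induced maps in \w{\fMR\bY\leftarrow\fZ\to\fX} are weak equivalences of \Rma[s.] Without this interpolating object your argument does not produce the asserted weak equivalence. A similar issue affects your uniqueness argument, which needs a comparison map between $\bY$ and $\bY'$ before \cite[I, Lemma 5.5]{BKanH} (or any $R$-completeness argument) can be applied.
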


If we only assume that \w{\bY\in\Sa} realizes $\fX$, it is unique up
to $R$-equivalence.

\begin{proof}
Let \w{\Vd\to\Lambda} be any a free simplicial resolution of the
allowable \TRal $\Lambda$. It is readily verified that if $\Lambda$ is
simply-connected, this resolution may be chosen so that each \w{V\sb{k}} is
simply-connected, since \w{\tilde{H}\sp{i}(K(R,n);R)=0} for
\w[.]{0\leq i<n}

By Theorem \ref{tres}, \w{\Vd} may be realized by a cosimplicial
space \w[,]{\Wu} with each \w{\bW\sp{n}} a simply-connected $R$-GEM,
and the corresponding simplicial \Rma \w{\fVd:=\fMR\Wu} is augmented to $\fX$.
We may assume that \w{\Wu} is Reedy fibrant.
Since \w{\Vd\to\Lambda} is acyclic, for each \w{\KR{i}\in\TR} we have:
\begin{myeq}\label{eqhtpycoh}
\pi\sb{n}\Vd\lin{\KR{i}}~\cong~\pi\sb{n}\Hu{i-n}{\Wu}{R}\cong~\begin{cases}
\Lambda\lin{\KR{i}}&\hsm \text{if}\hsm n=0\\
0&\hsm \text{otherwise.}
\end{cases}
\end{myeq}
\noindent Therefore, by Lemma \ref{lcocycles} the cosimplicial graded
$R$-module \w{H\sb{\ast}(\Wu;R)} satisfies:
\begin{myeq}\label{eqhtpyhmlgy}
\pi\sp{n}H\sb{i-n}(\Wu;R)\cong~\begin{cases}
M\sb{i}&\hsm \text{if}\hsm n=0\\
0&\hsm \text{otherwise.}
\end{cases}
\end{myeq}
\noindent for \w{M\sb{\ast}} as in Lemma \ref{lallow}.

Therefore, because \w{R=\QQ} or \w[,]{\Fp} the homology spectral sequence
for the simplicial space \w{\Wu} (cf.\ \cite{AndG,RecS,BousHS}), with:
\begin{myeq}\label{eqhmlgyss}
E\sp{2}\sb{m,t}~=~\pi\sp{m}H\sb{t}(\Wu;R)~\Longrightarrow~H\sb{t-m}(\Tot\Wu;R)~,
\end{myeq}
\noindent satisfies the hypotheses of \cite[Theorem 3.4]{BousHS}, so it
converges strongly, and \w{\bY:=\Tot\Wu} is simply-connected.
Moreover, \w[,]{H\sb{\ast}(\bY;R)\cong\Hm{\Lambda}} since \wref{eqhmlgyss}
collapses at the \ww{E\sp{2}}-term, so as in Theorem \ref{tresm} (compare
Proposition \ref{pcommon}), \w{\bY\to\Wu} is a weak $\G$-resolution for the
class $\G$ of all simplicial $R$-modules. This implies that \w{\LG\bY\simeq\bY} by
\cite[Theorem 6.5]{BousC} \wh that is, $\bY$ is $R$-complete
(cf.\ \cite[I, \S 5]{BKanH} and \cite[\S 7.7]{BousC}).

Note that the natural map \w{\cu{\bY}\to\Wu} induces a weak equivalence of
simplicial \Rma[s] \w{\fVd\to\cd{\fMR\bY}} (cf. \S \ref{stma}), where
\w[.]{\fVd:=\fMR\Wu} On the other hand, by Theorem \ref{tres} we also
have a weak equivalence \w[.]{\fVd\to\cd{\fX}}

For any simplicial \Rma \w[,]{\fWd} we can define its \emph{realization}
\w{\fZ:=\|\fWd\|:\bT\to\Sa} by letting \w{\fZ\lin{\bB}} denote the diagonal
of the bisimplicial group \w[,]{\fWd\lin{\bB}} for any \w{\bB\in\bT}. Since
\w{P\fZ\lin{\bB}=\fZ\lin{P\bB}\to\fZ\lin{\bB}} is a fibration by
\cite[Theorem 6.2]{AndF}, and the diagonal preserves products and cofibrations,
we see that $\fZ$ satisfies the three conditions of Definition \ref{dma},
so it is an \Rma[.]

Moreover, by \wref{eqhtpyhmlgy} the Quillen spectral sequence for the
bisimplicial group \w{\fWd\lin{\bB}} collapses for any
\w[,]{\bB\in\TR} and thus the natural maps of \Rma[s]
\mydiagram[\label{eqsimpaug}]{
\fMR\bY && \fZ=\|\fWd\| \ar[ll]\ar[rr] && \fX
}
\noindent induced by the simplicial augmentations
\w{\fMR\bY\leftarrow\fWd\to\fX} are both weak equivalences. Thus we see that
$\bY$ indeed realizes $\fX$, up to weak equivalence of \Rma[s.]
The uniqueness up to weak equivalence (for $R$-complete $\bY$) follows from
\cite[I, Lemma 5.5]{BKanH}.
\end{proof}

\end{document}